\newtheorem{theorem}{Theorem}[section]
\newtheorem{corollary}[theorem]{Corollary}
\newtheorem{lemma}[theorem]{Lemma}
\newtheorem{proposition}[theorem]{Proposition}
\newtheorem*{theorem*}{Theorem}
\newtheorem*{corollary*}{Corollary}
\newtheorem*{conjecture*}{Conjecture}
\newtheorem*{lemma*}{Lemma}
\newtheorem*{proposition*}{Proposition}
\newtheorem*{problem*}{Problem}
\newtheorem*{axiom*}{Axiom}
\newtheorem*{example*}{Example}
\newtheorem*{exercise*}{Exercise}
\newtheorem*{definition*}{Definition}
\theoremstyle{definition}
\newtheorem{remark}{Remark}[section]
\newtheorem*{remark*}{Remark}
\numberwithin{equation}{section}
\newcommand{\im}{{\rm Im}}
\newcommand{\re}{{\rm Re}}
\newcommand{\supp}{{\rm supp }}
\renewcommand{\bigskip}{\vspace{0.2cm}}
\renewcommand{\l}{\left}
\renewcommand{\r}{\right}
\newcommand{\cleq}{\lesssim}
\newcommand{\norm}{\Vert}
\newcommand{\wto}{\rightharpoonup}
\newcommand{\til}{\widetilde}
\def\eqref[#1]{\textup{(\ref{#1})}}
\def\secref[#1]{Section~\ref{#1}}
\def\remref[#1]{Remark~\ref{#1}}
\def\appref[#1]{Appendix~\ref{#1}}
\def\lemref[#1]{Lemma~\ref{#1}}
\def\corref[#1]{Corollary~\ref{#1}}
\def\thmref[#1]{Theorem~\ref{#1}}
\def\propref[#1]{Proposition~\ref{#1}}
\def\conjref[#1]{Conjecture~\ref{#1}}
\def\defref[#1]{Definition~\ref{#1}}
\def\abs[#1]{|#1|}
\def\norm[#1]{\left\Vert #1 \right\Vert}
\def\jbra[#1]{\langle #1 \rangle}
\def\tbra[#1,#2]{\left\langle #1 , #2\right\rangle} 
\def\rbra[#1,#2]{\left( #1 , #2 \right)} 
\def\sbra[#1,#2]{\left[ #1 , #2 \right]} 
\def\fbra[#1,#2]{\{ #1 | #2 \}} 
\def\besov[#1,#2,#3]{B_{#2,#3}^{#1}}
\def\hbesov[#1,#2,#3]{\dot{B}_{#2,#3}^{#1}}
\newcommand{\N}{{\mathbb N}}
\newcommand{\R}{{\mathbb R}}
\newcommand{\cE}{{\mathcal E}}
\newcommand{\cK}{{\mathcal K}}
\newcommand{\cP}{{\mathcal P}}
\newcommand{\cS}{{\mathcal S}}
\newcommand{\scA}{{\mathscr A}}
\newcommand{\scG}{{\mathscr G}}
\newcommand{\scK}{{\mathscr K}}
\newcommand{\scM}{{\mathscr M}}
\begin{document}
\title[G.E. for gDNLS]{A sufficient condition for global existence of solutions to a generalized derivative nonlinear Schr\"{o}dinger equation}
\author[N. Fukaya]{Noriyoshi Fukaya}
\address{Department of Mathematics, Graduate School of Science, Tokyo University of Science, Shinjuku, Tokyo, 162-8601, Japan}
\email{1116702@ed.tus.ac.jp}
\author[M. Hayashi]{Masayuki Hayashi}
\address{Department of Applied Physics, Waseda University,  Shinjuku, Tokyo 169-8555, Japan}

\email{masayuki-884@fuji.waseda.jp}
\author[T. Inui]{Takahisa Inui}
\address{Department of Mathematics, Graduate School of Science, Kyoto University, Kyoto City, Kyoto, 606-8502, Japan}
\email{inui@math.kyoto-u.ac.jp}
\date{}
\keywords{variational structure, generalized derivative nonlinear Schr\"{o}dinger equation, global existence}
\maketitle

\begin{abstract}
We give a sufficient condition for global existence of the solutions to a generalized derivative nonlinear Schr\"{o}dinger equation (gDNLS) by a variational argument. 
The variational argument is applicable to a cubic derivative nonlinear Schr\"{o}dinger equation (DNLS). For (DNLS), Wu \cite{Wu15} proved that the solution with the initial data $u_0$ is global if $\left\Vert u_0 \right\Vert_{L^2}^2<4\pi$ by the sharp Gagliardo--Nirenberg inequality. The variational argument gives us another proof of the global existence for (DNLS). Moreover, by the variational argument, we can show that the solution to (DNLS) is global if the initial data $u_0$ satisfies that $\left\Vert u_0 \right\Vert_{L^2}^2=4\pi$ and the momentum $P(u_0)$ is negative. 
\end{abstract}

\tableofcontents


\section{Introduction}

\subsection{Background}

The following equation is known as a derivative nonlinear Schr\"{o}dinger equation:
\begin{equation}
\label{eq1.1}
i \partial_t v + \partial_x^2 v +i  \partial_x(|v|^{2} v )=0 , \quad (t,x) \in \R \times \R.
\end{equation}
This equation appears in plasma physics (see \cite{MOMT76, M76}) and as a model for ultrashort optical pulses (see \cite{MMW07}). 
Using the gauge transformation
\[ u(t,x) = v(t,x)\exp\l( \frac{i}{2} \int_{-\infty}^{x} |v(t,x)|^2 dx\r), \]
we get a Hamiltonian form of the equation \eqref[eq1.1]: 
\begin{equation}
\label{DNLS}
\tag{DNLS}  
i \partial_t u + \partial_x^2 u +i |u|^{2} \partial_x u =0 , \quad (t,x) \in \R \times \R.
\end{equation}
Namely, this equation can be written as $i \partial_t u = E ' (u)$ (see below for the definition of the Hamiltonian $E$). The Cauchy problem for \eqref[DNLS] (or equivalently \eqref[eq1.1]) has been studied by many researchers. It is known that \eqref[DNLS] is locally well-posed in the energy space $H^1(\R)$ (see \cite{TF80, HO92, H93, HO94_1, HO94_2}). Hayashi and Ozawa proved that the solution is global if $\Vert u_0 \Vert_{L^2}^2 < 2\pi$ (see \cite{HO94_1, O96}). Wu proved that it holds if $\Vert u_0 \Vert_{L^2}^2 < 4\pi$ (see \cite{Wu13, Wu15}). Recently, Miao, Tang, and Xu obtained the global well-posedness by a variational argument (see Remark \ref{rem1.5} below). 
For the initial data with low regularity, there are also many works. 
Takaoka \cite{T99} proved that \eqref[DNLS] is locally well-posed in $H^s(\R)$ when $s\geq 1/2$ by the Fourier restricted method. Biagioni and Linares \cite{BL01} proved that the solution map from $H^s(\R)$ to $C([-T,T]:H^s(\R))$, where $T>0$,  for \eqref[DNLS] is not locally uniformly continuous when $s <1/2$. Colliander, Keel, Staffilani, Takaoka, and Tao \cite{CKSTT02} proved that the $H^s$-solution is global if $\Vert u_0 \Vert_{L^2}^2 < 2\pi$ when $s>1/2$ by the $I$-method (see also \cite{CKSTT01, T01}).  Recently, Miao, Wu, and Xu \cite{MWX11} showed that $H^{1/2}$-solution is global if $\Vert u_0 \Vert_{L^2}^2 < 2\pi$. Guo and Wu \cite{GW17} improved their result, that is, they proved that $H^{1/2}$-solution is global if $\Vert u_0 \Vert_{L^2}^2 < 4\pi$.
The orbital stability of solitary waves has been also studied. It is known that \eqref[DNLS] has a two-parameter family of the solitary waves
$u_{\omega,c}(t,x)=e^{i\omega t} \phi_{\omega,c}(x-ct)$,
where $(\omega,c)$ satisfies $\omega > c^2/4$, or $\omega = c^2/4$ and $c>0$ (see below for the explicit formula of $\phi_{\omega,c}$). Boling Guo and Yaping Wu \cite{GW95} proved that the solitary waves $u_{\omega,c}$ are orbitally stable when $\omega>c^2/4$ and $c<0$ by the abstract theory of Grillakis, Shatah, and Strauss \cite{GSS87, GSS90} and the spectral analysis of the linearized operators. Colin and Ohta \cite{CO06} proved that the solitary waves $u_{\omega,c}$ are orbitally stable when $\omega>c^2/4$ by characterizing the solitary waves from the view point of a variational structure. The case of $\omega=c^2/4$ and $c>0$ is treated by Kwon and Wu \cite{KW16pre}. Recently, the stability of the multi-solitons is studied by Miao, Tang, and Xu \cite{MTX16pre_2} and Le Coz and Wu \cite{LCW16pre}. 

To understand the structural properties of \eqref[DNLS], Liu, Simpson, and Sulem introduced an extension of \eqref[DNLS] with general power nonlinearity (see \cite{LSS13}). 
The generalized derivative nonlinear Schr\"{o}dinger equation is
\begin{equation}
\tag{gDNLS}
\label{gDNLS}
\l\{
\begin{array}{ll}
i \partial_t u + \partial_x^2 u +i |u|^{2\sigma} \partial_x u =0 , & (t,x) \in \R \times \R,
\\
u(0,x)=u_0(x), & x\in \R,
\end{array}
\r.
\end{equation}
where $\sigma >0$. The equation (gDNLS) is invariant under the scaling transformation 
\begin{align*}
u_{\gamma}(t,x):=\gamma^{\frac{1}{2\sigma}} u(\gamma^2 t,\gamma x), \quad \gamma>0. 
\end{align*}
This implies that its critical Sobolev exponent is $ s_c=\frac{1}{2}-\frac{1}{2\sigma}$. In particular, the equation (DNLS) is $L^2$-critical. 
In \cite{LSS13}, Liu, Simpson, and Sulem investigated the orbital stability of a two-parameter family of solitary waves
\[ u_{\omega,c}(t,x)=e^{i\omega t} \phi_{\omega,c}(x-ct), \]
where $(\omega,c)$ satisfies $\omega > c^2/4$, or $\omega = c^2/4$ and $c>0$,
\begin{align} 
\label{eq1.2}
\phi_{\omega,c}(x) 
&= \Phi_{\omega,c}(x) \exp\l( i\frac{c}{2}x - \frac{i}{2\sigma +2} \int_{0}^{x} \Phi_{\omega,c}(y)^{2\sigma} dy \r),
\\
\label{eq1.3}
\Phi_{\omega,c}(x) &=
\l\{
\begin{array}{ll}\displaystyle
\l\{  \frac{(\sigma +1)(4\omega - c^2)}{2\sqrt{\omega} \cosh (\sigma \sqrt{4\omega- c^2}x)-c} \r\}^{\frac{1}{2\sigma}}, 
& \displaystyle \text{if } \omega > \frac{c^2}{4},
\\
 & 
\\ \displaystyle
\l\{\frac{2(\sigma+1) c}{\sigma^2(cx)^2+1}\r\}^{\frac{1}{2\sigma}}, & \displaystyle \text{if } \omega =\frac{c^2}{4} \text{ and } c>0.
\end{array}
\r.
\end{align}
We note that $\Phi_{\omega,c}$ is the positive even solution of 
\begin{align}
\label{elliptic}
- \Phi ''+ \l(\omega- \frac{c^2}{4}\r) \Phi +\frac{c}{2} |\Phi|^{2\sigma} \Phi - \frac{2\sigma+1}{(2\sigma+2)^2} |\Phi|^{4\sigma}\Phi =0, \quad x \in \R, 
\end{align}
and then the complex-valued function $\phi_{\omega,c}$ satisfies
\[ -\phi ''+ \omega \phi +ic \phi ' - i|\phi|^{2\sigma} \phi '=0, \quad x \in \R.\]
In \cite{LSS13}, they proved that  the solitary waves are orbitally stable if $-2\sqrt{\omega}<c<2z_0 \sqrt{\omega}$, and orbitally unstable if $2z_0\sqrt{\omega}<c<2\sqrt{\omega}$ when $1<\sigma<2$, where the constant $z_0=z_0(\sigma)\in(-1,1)$ is the solution of 
\begin{align*} 
F_{\sigma}(z)&:= (\sigma-1)^2 \l\{ \int_{0}^{\infty} (\cosh y-z)^{-\frac{1}{\sigma}} dy\r\}^2 
\\
&\qquad -  \l\{ \int_{0}^{\infty} (\cosh y-z)^{-\frac{1}{\sigma}-1}(z\cosh y - 1)dy\r\}^2 =0.
\end{align*}
Moreover,  they also proved that the solitary waves for all $\omega>c^2/4$ are orbitally unstable when $\sigma \geq 2$ and orbitally stable when $0<\sigma <1$. Recently, the first author \cite{Fuk16} proved that  the solitary waves are orbitally unstable if $c=2z_0\sqrt{\omega}$ when $7/6<\sigma<2$. More recently, Tang and Xu investigate stability of the two sum of solitary waves for \eqref[gDNLS] (see \cite{TX17pre} for more details).
Before Liu, Simpson, and Sulem \cite{LSS13}, Hao \cite{Hao07} considered \eqref[gDNLS] and proved the local well-posedness in $H^{1/2}(\R)$ when $\sigma\geq 5/2$. Santos \cite{San15} proved the existence and uniqueness of a solution $u \in C ( [ 0 , T ] ; H^{1/2}( \R ) )$ for sufficient small initial data when $\sigma > 1$. 
Recently, the second author and Ozawa \cite{HO16} proved local well-posedness in $H^1(\R)$ when $\sigma \geq 1$, and that the following quantities are conserved.
\begin{align*}
\tag{Energy}
 E(u)&:=\frac{1}{2}\left\Vert \partial_x u  \right\Vert_{L^2}^2 - \frac{1}{2\sigma +2} \re \int_{\R} i |u|^{2\sigma} \overline{u} \partial_x u dx,
\\
\tag{Mass}
 M(u)&:= \left\Vert u \right\Vert_{L^2}^2,
\\
\tag{Momentum}
P(u)&:=\re \int_{\R} i\partial_x u \overline{u} dx.
\end{align*} 
Moreover, they proved global well-posedness for small initial data. They also constructed global solutions for any initial data in $H^1(\mathbb{R})$ in the case $0 <\sigma <1$ ($L^2$-subcritical case). However, in the case $\sigma \geq 1$ ($L^2$-critical or supercritical case), there has been no global existence  result for large data. In the present paper, we investigate global well-posedness for \eqref[gDNLS]  in the case $\sigma \geq 1$ by a variational argument. 
More precisely, we give a variational characterization of solitary waves and a sufficient condition for global existence of solutions to \eqref[gDNLS] by using the characterization. Such an argument was done for nonlinear hyperbolic partial differential equations by Sattinger \cite{S68} (see also \cite{T72,PS75}).
Our argument is also applicable to \eqref[DNLS]. Indeed, the variational argument gives another proof of the result by Wu \cite{Wu15}. Moreover, we prove that the solution of \eqref[DNLS] is global if the initial data $u_0$ satisfies $\norm[u_0]_{L^2}^2=4\pi$ and $P(u_0)<0$. 
\subsection{Main Results}
To state our main results, we introduce some notations.
Let $(\omega,c)$ satisfy 
\begin{equation} 
\label{eq1.5}
\omega > c^2/4, \text{ or } \omega = c^2/4 \text{ and } c>0.
\end{equation}
For $(\omega,c)$ satisfying \eqref[eq1.5], we define 
\begin{equation*}
S_{\omega,c}(\varphi):= E(\varphi)+ \frac{\omega}{2} M(\varphi) + \frac{c}{2} P(\varphi).
\end{equation*}
We denote the nonlinear term by 
\[ N(\varphi):= \re \int_{\R} i |\varphi|^{2\sigma} \overline{\varphi} \partial_x \varphi dx. \]
We define 
\begin{equation*}
\til{S}_{\omega,c}(\psi):=\frac{1}{2}\left\Vert \partial_x \psi \right\Vert_{L^2}^2 +\frac{1}{2}\l( \omega -\frac{c^2}{4}\r) \left\Vert \psi \right\Vert_{L^2}^2 +\frac{c}{2(2\sigma+2)}\left\Vert \psi \right\Vert_{L^{2\sigma+2}}^{2\sigma+2} -\frac{1}{2\sigma+2}N(\psi).
\end{equation*}
Then, we have $S_{\omega,c}(\varphi)=\til{S}_{\omega,c}(e^{-\frac{c}{2}ix}\varphi)$ by using the identities 
\begin{align} 
\label{eq1.6}
c P(\varphi)&=-\left\Vert \partial_x \varphi \right\Vert_{L^2}^2 -\frac{c^2}{4} \left\Vert \varphi \right\Vert_{L^2}^2 +\left\Vert \partial_x (e^{-\frac{c}{2}ix }\varphi) \right\Vert_{L^2}^2, \\
\label{eq1.6.1}
N(\varphi)&=-\frac{c}{2}\left\Vert \varphi \right\Vert_{L^{2\sigma+2}}^{2\sigma+2}+N(e^{-\frac{c}{2}ix}\varphi).
\end{align}
We denote the scaling transformation by $f_{\lambda}^{\alpha,\beta}(x):=e^{\alpha \lambda} f (e^{-\beta \lambda}x)$ for $(\alpha,\beta) \in \R^2$ and any function $f$.  For $(\alpha,\beta) \in \R^2$,  we define
\begin{align*}
\til{K}_{\omega,c}^{\alpha,\beta}(\psi)
&:=\partial_{\lambda} \til{S}_{\omega,c} (\psi_{\lambda}^{\alpha,\beta})|_{\lambda =0},
\\
K_{\omega,c}^{\alpha,\beta}(\varphi)
&:=\til{K}_{\omega,c}^{\alpha,\beta}(e^{-\frac{c}{2}ix}\varphi) .
\end{align*}
By a direct calculation, we have the following explicit formulae.
\begin{align*}
\til{K}_{\omega,c}^{\alpha,\beta}(\psi)
&=\left\langle \til{S}_{\omega,c}'(\psi), \alpha \psi -\beta x  \partial_x \psi \right\rangle
\\
&=\frac{2\alpha-\beta}{2}\left\Vert \partial_x \psi \right\Vert_{L^2}^2 +\frac{2\alpha+\beta}{2} \l( \omega - \frac{c^2}{4} \r)\left\Vert \psi \right\Vert_{L^2}^2 
+\frac{\{(2\sigma+2)\alpha+\beta\} c}{2(2\sigma+2)} \left\Vert \psi \right\Vert_{L^{2\sigma+2}}^{2\sigma+2} -\alpha N(\psi),
\\
K_{\omega,c}^{\alpha,\beta}(\varphi)
&=\left\langle \til{S}_{\omega,c}'(e^{-\frac{c}{2}ix}\varphi), \alpha e^{-\frac{c}{2}ix}\varphi -\beta x  \partial_x(e^{-\frac{c}{2}ix}\varphi)  \right\rangle
\\
&=\left\langle S_{\omega,c}'(\varphi) ,\alpha \varphi+\frac{c}{2}i\beta x \varphi -\beta x  \partial_x\varphi  \right\rangle
\\
&= \frac{2\alpha-\beta}{2}\left\Vert \partial_x \varphi \right\Vert_{L^2}^2 +\l( \frac{2\alpha+\beta}{2} \omega - \frac{c^2}{4} \beta \r)\left\Vert \varphi \right\Vert_{L^2}^2 +\frac{2\alpha-\beta}{2}c P(\varphi) 
\\
& \quad+\frac{\beta c}{2(2\sigma+2)} \left\Vert \varphi \right\Vert_{L^{2\sigma+2}}^{2\sigma+2} -\alpha N(\varphi),
\end{align*}
where we have used \eqref[eq1.6] and \eqref[eq1.6.1]. 

\begin{remark}
\ 

\begin{enumerate}
\item If $\beta\neq 0$, then $K_{\omega,c}^{\alpha,\beta}$ is different from $I_{\omega,c}^{\alpha,\beta}(\varphi):=\partial_{\lambda} S_{\omega,c} (\varphi_{\lambda}^{\alpha,\beta})|_{\lambda =0}$. Indeed, the explicit formula of $I_{\omega,c}^{\alpha,\beta}$ is as follows.
\[ I_{\omega,c}^{\alpha,\beta}(\varphi)
= \frac{2\alpha-\beta}{2}\norm[\partial_x \varphi]_{L^2}^2 +\frac{2\alpha+\beta}{2} \omega \norm[\varphi]_{L^2}^2 
+c\alpha P(\varphi)  -\alpha N(\varphi). \] 
We note that $K_{\omega,c}^{\alpha,0}$ coincides with $I_{\omega,c}^{\alpha,0}$, and especially $K_{\omega,c}^{1,0}=I_{\omega,c}^{1,0}$ is nothing but  the Nehari functional.
\item It is not clear whether the momentum $P$ is positive or not. That is why we introduce $\til{S}_{\omega,c}$ by using \eqref[eq1.6]. Such an argument can be seen in \cite{BGL14} (see the equation (14) in \cite{BGL14} for the detail). 
\item The functional $K_{\omega,c}^{\alpha,\beta}$  is more useful to obtain the characterization of the solitary waves when $\omega=c^2/4$ and $c>0$ than $I_{\omega,c}^{\alpha,\beta}$ since $K_{\omega,c}^{\alpha,\beta}$ contains the $L^{2\sigma+2}$-norm (see the proof in \secref[sec2.2]). 
\item $\til{S}_{\omega,c}$ and $\til{K}_{\omega,c}^{\alpha,\beta}$ are relevant to the following elliptic equation.
\[ - \psi ''+\l(\omega - \frac{c^2}{4}\r) \psi + \frac{c}{2} |\psi|^{2\sigma}\psi - i|\psi|^{2\sigma} \psi '=0, \quad x \in \R. \]
\end{enumerate}
\end{remark}

We define the following function space for $(\omega,c)$ satisfying \eqref[eq1.5]. 
\begin{equation*}
X_{\omega,c}:=\l\{
\begin{array}{ll}
H^1(\R), & \text{if } \omega>c^2/4,
\\
\dot{H}^1(\R) \cap L^{2\sigma+2}(\R), & \text{if } \omega=c^2/4 \text{ and } c>0.
\end{array}
\r.
\end{equation*}
We consider the following minimization problem:
\begin{align*}
\mu_{\omega,c}^{\alpha,\beta}&:=\inf\{ S_{\omega,c}(\varphi): e^{-\frac{c}{2}ix}\varphi \in X_{\omega,c}\setminus\{0\}, K_{\omega,c}^{\alpha,\beta}(\varphi)=0 \}
\\
&=\inf\{ \til{S}_{\omega,c}(\psi):\psi \in X_{\omega,c}\setminus\{0\}, \til{K}_{\omega,c}^{\alpha,\beta}(\psi)=0 \}.
\end{align*}

\begin{remark}\ 

\begin{enumerate}
\item We note that the solitary waves $\phi_{c^2/4,c}$ do not belong to $L^2(\R)$ when $\sigma \geq 2$. Therefore, we define $X_{c^2/4,c}:=\dot{H}^1(\R)\cap L^{2\sigma+2}(\R)$ to characterize the solitary waves $\phi_{c^2/4,c}$ (cf \cite{KW16pre}). 
\item $S_{c^2/4,c}$ seems meaningless on the function space
$\{\varphi: e^{-\frac{c}{2}ix}\varphi \in X_{c^2/4,c}\}$
since $S_{c^2/4,c}$ contains $L^2$-norm. However, in fact,  $S_{c^2/4,c}$ is well-defined on the function space
since $\til{S}_{c^2/4,c}$ is defined on $\dot{H}^1(\R) \cap L^{2\sigma+2}(\R)$ and the equality $S_{c^2/4,c}(\varphi)=\til{S}_{c^2/4,c}(e^{-\frac{c}{2}ix}\varphi)$ holds. 
Similarly, $K_{c^2/4,c}^{\alpha,\beta}$ is well-defined on this function space.
\item
Since $\varphi \in H^1(\R)$ if and only if $e^{-\frac{c}{2}ix}\varphi \in H^1(\R)$, when $\omega>c^2/4$, we have
\[ \mu_{\omega,c}^{\alpha,\beta}=\inf\{ S_{\omega,c}(\varphi): \varphi \in H^1(\R) \setminus\{0\}, K_{\omega,c}^{\alpha,\beta}(\varphi)=0 \}. \] 
However, when $\omega=c^2/4$ and $c>0$, the above equality does not hold.
\end{enumerate}
\end{remark}

We assume that $(\alpha,\beta)\in \R^2$ satisfies 
\begin{equation}
\label{eq1.7}
\l\{
\begin{array}{ll}
2\alpha-\beta>0,\  2\alpha+\beta>0, \text{ and }  \beta c \leq 0,  & \text{when } \omega>c^2/4,
\\
2\alpha-\beta> 0,\  2\alpha+\beta> 0, \text{ and }  \beta < 0,  & \text{when } \omega=c^2/4 \text{ and } c>0.
\end{array}
\r.
\end{equation}

We define some function spaces. 
\begin{align*}
\scM_{\omega,c}^{\alpha,\beta}&:=\{\varphi: e^{-\frac{c}{2}ix} \varphi \in X_{\omega,c} \setminus\{0\}, S_{\omega,c}(\varphi)=\mu_{\omega,c}^{\alpha,\beta},\ K_{\omega,c}^{\alpha,\beta}(\varphi)=0  \},
\\
\scG_{\omega,c}&:=\{\varphi: e^{-\frac{c}{2}ix} \varphi \in X_{\omega,c} \setminus\{0\}, S_{\omega,c}'(\varphi)=0 \}.
\end{align*}

We give the following characterization of the solitary waves.
\begin{theorem}
\label{thm1.1}
Let $\sigma \geq 1$, $(\omega,c)$ satisfy \eqref[eq1.5], and $(\alpha,\beta)$ satisfy \eqref[eq1.7]. Then, we have 
\[ \scM_{\omega,c}^{\alpha,\beta}=\scG_{\omega,c}=\{e^{i\theta_0} \phi_{\omega,c}(\cdot-x_0): \theta_0 \in [0,2\pi), x_0\in \R \}. \]
\end{theorem}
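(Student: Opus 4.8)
The plan is to prove the chain of equalities in two stages, \(\scG_{\omega,c}=\{e^{i\theta_0}\phi_{\omega,c}(\cdot-x_0)\}\) and \(\scM_{\omega,c}^{\alpha,\beta}=\scG_{\omega,c}\), working throughout with the gauged functionals: since \(\psi=e^{-\frac{c}{2}ix}\varphi\) turns \(S_{\omega,c}\), \(K_{\omega,c}^{\alpha,\beta}\), \(\scM_{\omega,c}^{\alpha,\beta}\), \(\scG_{\omega,c}\) into their tilded counterparts on \(X_{\omega,c}\), it suffices to analyze the constrained minimization \(\mu_{\omega,c}^{\alpha,\beta}=\inf\{\til{S}_{\omega,c}(\psi):\psi\in X_{\omega,c}\setminus\{0\},\ \til{K}_{\omega,c}^{\alpha,\beta}(\psi)=0\}\). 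The three ingredients are: (i) \(\mu_{\omega,c}^{\alpha,\beta}\) is attained; (ii) every minimizer is a free critical point of \(\til{S}_{\omega,c}\); and (iii) the critical points are exactly the gauge--translation orbit of the explicit profile.

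For (i) I would first exploit the constraint to eliminate the indefinite term \(N(\psi)\): solving \(\til{K}_{\omega,c}^{\alpha,\beta}(\psi)=0\) for \(\alpha N(\psi)\) (note \(2\alpha\pm\beta>0\) forces \(\alpha>0\)) and substituting back, \(\til{S}_{\omega,c}(\psi)\) becomes a linear combination of \(\norm[\partial_x\psi]_{L^2}^2\), \((\omega-c^2/4)\norm[\psi]_{L^2}^2\) and \(c\norm[\psi]_{L^{2\sigma+2}}^{2\sigma+2}\) whose coefficients are nonnegative under \eqref[eq1.7]. This gives coercivity of \(\til{S}_{\omega,c}\) on the constraint and hence boundedness of minimizing sequences in \(X_{\omega,c}\). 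Passing to a weak limit, rescaling the limit by some \(\psi_{\lambda}^{\alpha,\beta}\) to restore \(\til{K}_{\omega,c}^{\alpha,\beta}=0\), and ruling out vanishing and dichotomy yields a nontrivial minimizer \(\psi_*\). I expect this to be the main obstacle: the derivative nonlinearity \(N\) is not controlled by symmetric rearrangement, so one must run a concentration--compactness argument (using translation invariance), and the endpoint case \(\omega=c^2/4\), where only \(\dot{H}^1\cap L^{2\sigma+2}\) control is available and \(L^2\)-mass may escape, must be treated separately.

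For (ii) I would use a Lagrange multiplier: \(\til{S}_{\omega,c}'(\psi_*)=\eta\,(\til{K}_{\omega,c}^{\alpha,\beta})'(\psi_*)\) for some \(\eta\in\R\). Pairing with the scaling generator \(\alpha\psi_*-\beta x\partial_x\psi_*\) gives \(\til{K}_{\omega,c}^{\alpha,\beta}(\psi_*)=0\) on the left, so \(\eta\,\partial_{\lambda}\til{K}_{\omega,c}^{\alpha,\beta}((\psi_*)_{\lambda}^{\alpha,\beta})|_{\lambda=0}=0\). Since \(\partial_{\lambda}\til{K}_{\omega,c}^{\alpha,\beta}((\psi_*)_{\lambda}^{\alpha,\beta})|_{\lambda=0}=\partial_{\lambda}^2\til{S}_{\omega,c}((\psi_*)_{\lambda}^{\alpha,\beta})|_{\lambda=0}\), I would compute this second derivative and again use the constraint to delete \(N(\psi_*)\); the coefficients of \(\norm[\partial_x\psi_*]_{L^2}^2\) and \((\omega-c^2/4)\norm[\psi_*]_{L^2}^2\) then carry the factors \(-(2\sigma\alpha+\beta)\) and \(\beta-2\sigma\alpha\), which are strictly negative precisely because \(\sigma\ge1\) together with \eqref[eq1.7], while the \(L^{2\sigma+2}\)-coefficient is \(\le0\) because \(\beta c\le0\). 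Hence the second derivative is strictly negative, forcing \(\eta=0\), so \(\til{S}_{\omega,c}'(\psi_*)=0\) and, undoing the gauge, \(\psi_*\in\scG_{\omega,c}\); this already proves \(\scM_{\omega,c}^{\alpha,\beta}\subseteq\scG_{\omega,c}\).

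For (iii) I would identify \(\scG_{\omega,c}\). Undoing the gauge turns \(\til{S}_{\omega,c}'(\psi)=0\) into \(S_{\omega,c}'(\varphi)=0\), i.e.\ the equation \(-\varphi''+\omega\varphi+ic\varphi'-i|\varphi|^{2\sigma}\varphi'=0\); a standard phase reduction (extracting the modulus--phase structure as in \eqref[eq1.2]) shows every solution in \(X_{\omega,c}\) is, up to a constant phase and a translation, the gauge transform of the positive even solution of \eqref[elliptic], which is unique by phase-plane analysis of the ODE. This yields \(\scG_{\omega,c}=\{e^{i\theta_0}\phi_{\omega,c}(\cdot-x_0):\theta_0\in[0,2\pi),\,x_0\in\R\}\). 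To close the loop, observe that any \(\varphi\in\scG_{\omega,c}\) satisfies \(K_{\omega,c}^{\alpha,\beta}(\varphi)=\langle S_{\omega,c}'(\varphi),\alpha\varphi+\frac{c}{2}i\beta x\varphi-\beta x\partial_x\varphi\rangle=0\), so it is admissible and \(S_{\omega,c}(\varphi)\ge\mu_{\omega,c}^{\alpha,\beta}\); since the minimizer \(\psi_*\) obtained in (i)--(ii) lies in \(\scG_{\omega,c}\) and \(S_{\omega,c}\) is invariant under the phase and translation symmetries, it is constant on this single orbit, whence \(S_{\omega,c}(\varphi)=\mu_{\omega,c}^{\alpha,\beta}\) and \(\scG_{\omega,c}\subseteq\scM_{\omega,c}^{\alpha,\beta}\). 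Together with (ii) this gives the desired equalities.
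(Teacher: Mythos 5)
Your plan is correct and follows essentially the same route as the paper: identify $\scG_{\omega,c}$ by the modulus--phase reduction plus ODE uniqueness, show minimizers are critical points via the Lagrange multiplier and the strict negativity of $\partial_\lambda \til{K}_{\omega,c}^{\alpha,\beta}(\psi_\lambda^{\alpha,\beta})|_{\lambda=0}$ (your factors $-(2\sigma\alpha+\beta)$ and $\beta-2\sigma\alpha$ match the paper's computation), establish attainment by translation-based concentration compactness in $X_{\omega,c}$, and close with the orbit argument for $\scG_{\omega,c}\subseteq\scM_{\omega,c}^{\alpha,\beta}$. The only part you leave as a sketch --- ruling out vanishing and dichotomy --- is exactly where the paper invests its technical work (positivity of $\mu$, Lieb's compactness lemma, and a Brezis--Lieb splitting of $\til{K}$ via the identity $-N(\psi)=-\norm[\partial_x\psi]_{L^2}^2-\tfrac14\norm[\psi]_{L^{4\sigma+2}}^{4\sigma+2}+\norm[\partial_x\psi+\tfrac{i}{2}|\psi|^{2\sigma}\psi]_{L^2}^2$), and you correctly identify it as the main obstacle.
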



\thmref[thm1.1] also means that $\mu_{\omega,c}^{\alpha,\beta}$ and $\scM_{\omega,c}^{\alpha,\beta}$ are independent of $(\alpha,\beta)$ and $\scM_{\omega,c}^{\alpha,\beta}$ is not empty. Thus, we denote $\mu_{\omega,c}^{\alpha,\beta}$ by $\mu_{\omega,c}$.

We define 
\begin{align*}
	\scK_{\omega,c}^{\alpha,\beta,+}&:=\{ \varphi \in H^1(\R):S_{\omega,c}(\varphi)\leq \mu_{\omega,c}, K_{\omega,c}^{\alpha,\beta}(\varphi)\geq 0 \},
	\\
	\scK_{\omega,c}^{\alpha,\beta,-}&:=\{ \varphi \in H^1(\R):S_{\omega,c}(\varphi)\leq \mu_{\omega,c}, K_{\omega,c}^{\alpha,\beta}(\varphi)< 0 \}.
\end{align*}
The characterization by \thmref[thm1.1] gives us the following sufficient condition for global existence. 
\begin{theorem}
\label{thm1.2}
Let $\sigma\geq 1$, $(\omega,c)$ satisfies \eqref[eq1.5], and $(\alpha,\beta)$ satisfies \eqref[eq1.7]. Then, $\scK_{\omega,c}^{\alpha,\beta,\pm}$ are invariant under the flow of \eqref[gDNLS]. Namely, if the initial data $u_0$ belongs to $\scK_{\omega,c}^{\alpha,\beta,\pm}$, then the solution $u(t)$ of \eqref[gDNLS] also belongs to $\scK_{\omega,c}^{\alpha,\beta,\pm}$ for all $t\in I_{\max}$, where $I_{\max}$ denotes the maximal existence time. 

Moreover, if the initial data $u_0$ belongs to $\scK_{\omega,c}^{\alpha,\beta,+}$ for some $(\omega,c)$ satisfying \eqref[eq1.5] and $(\alpha,\beta)$ satisfying \eqref[eq1.7], then the corresponding solution $u$ of \eqref[gDNLS] exists globally in time and we have
\[ \norm[u]_{L^{\infty}(\R:H^1(\R))}\leq C(\norm[u_0]_{H^1}), \]
where $C:[0,\infty)\to \R$ is continuous.  
\end{theorem}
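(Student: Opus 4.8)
The plan is to prove Theorem~\ref{thm1.2} in two stages: first the invariance of the sets $\scK_{\omega,c}^{\alpha,\beta,\pm}$ under the flow, and then the a priori $H^1$-bound yielding global existence on the $+$ side. Both $S_{\omega,c}$ and $M$ are conserved along the flow of \eqref[gDNLS] (since $E$, $M$, $P$ are all conserved, so is any fixed linear combination), so the condition $S_{\omega,c}(u(t))\le \mu_{\omega,c}$ persists for all $t\in I_{\max}$ automatically. The content of the invariance statement is therefore the persistence of the \emph{sign} of $K_{\omega,c}^{\alpha,\beta}(u(t))$. The key structural fact I would extract from \thmref[thm1.1] is that $\mu_{\omega,c}$ is the minimum of $S_{\omega,c}$ on the zero set $\{K_{\omega,c}^{\alpha,\beta}=0\}$, and that no nonzero function with $S_{\omega,c}\le \mu_{\omega,c}$ can satisfy $K_{\omega,c}^{\alpha,\beta}=0$ except the solitary wave itself (which sits exactly at the level $\mu_{\omega,c}$). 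Thus a trajectory starting with $K_{\omega,c}^{\alpha,\beta}(u_0)>0$ (resp.\ $<0$) cannot cross the zero set $\{K_{\omega,c}^{\alpha,\beta}=0\}$ without having $S_{\omega,c}(u(t))\ge \mu_{\omega,c}$, which combined with $S_{\omega,c}(u(t))=S_{\omega,c}(u_0)\le \mu_{\omega,c}$ would force $u(t)\in\scM_{\omega,c}^{\alpha,\beta}$; but a solution cannot reach the orbit of a standing wave in finite time unless it started on it (by uniqueness, since the solitary-wave orbit is invariant). Using the continuity of $t\mapsto K_{\omega,c}^{\alpha,\beta}(u(t))$ on $H^1$ (which follows from continuity of the flow in $H^1$ together with the explicit polynomial-in-norms form of $K_{\omega,c}^{\alpha,\beta}$), the sign cannot jump, giving invariance of both $\scK^{\alpha,\beta,+}$ and $\scK^{\alpha,\beta,-}$.

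For the quantitative bound on the $+$ side, the strategy is to show that on $\scK_{\omega,c}^{\alpha,\beta,+}$ the functional $S_{\omega,c}$ dominates a coercive combination of $\norm[\partial_x \varphi]_{L^2}^2$ and $\norm[\varphi]_{L^2}^2$. The natural device is to write, for a suitable choice $(\alpha,\beta)$ satisfying \eqref[eq1.7], a convex-combination identity expressing $S_{\omega,c}(\varphi)$ as a positive multiple of a ``leftover'' coercive quadratic form plus a nonnegative multiple of $K_{\omega,c}^{\alpha,\beta}(\varphi)$. Concretely, since
\[
S_{\omega,c}(\varphi)=\frac{1}{2}\norm[\partial_x\varphi]_{L^2}^2+\frac{\omega}{2}\norm[\varphi]_{L^2}^2+\frac{c}{2}P(\varphi)-\frac{1}{2\sigma+2}N(\varphi),
\]
and $K_{\omega,c}^{\alpha,\beta}$ is an explicit linear combination of the same four quantities, I would solve for the coefficient $\theta\ge 0$ so that $S_{\omega,c}(\varphi)-\theta K_{\omega,c}^{\alpha,\beta}(\varphi)$ has vanishing coefficient on the sign-indefinite nonlinear piece $N(\varphi)$ (and, using \eqref[eq1.6], also controls the momentum term), leaving a manifestly nonnegative quadratic form in the gradient and $L^2$ norms. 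On $\scK_{\omega,c}^{\alpha,\beta,+}$ we have $K_{\omega,c}^{\alpha,\beta}(\varphi)\ge 0$, so $S_{\omega,c}(\varphi)\ge \theta K_{\omega,c}^{\alpha,\beta}(\varphi)+(\text{coercive quadratic})\ge(\text{coercive quadratic})$, and combined with $S_{\omega,c}(u(t))=S_{\omega,c}(u_0)$ and mass conservation $\norm[u(t)]_{L^2}=\norm[u_0]_{L^2}$ this bounds $\norm[\partial_x u(t)]_{L^2}$ uniformly in $t$. Hence $\norm[u]_{L^\infty(\R:H^1)}\le C(\norm[u_0]_{H^1})$ with $C$ continuous, and the blow-up alternative from the local theory of \cite{HO16} upgrades this a priori bound to global existence, $I_{\max}=\R$.

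The main obstacle I anticipate is the coercivity step, specifically verifying that the choice of $(\alpha,\beta)$ in \eqref[eq1.7] actually makes the residual quadratic form nonnegative simultaneously with $\theta\ge 0$. The difficulty is sharpest in the critical endpoint $\sigma=1$ (the \eqref[DNLS] case) and at the degenerate parameter regime $\omega=c^2/4$, $c>0$: there the $L^2$ norm may fail to be controlled (indeed $\phi_{c^2/4,c}\notin L^2$ for $\sigma\ge 2$), so the bound has to be phrased carefully, using the decomposition $\varphi=e^{\frac{c}{2}ix}\psi$ with $\psi\in X_{\omega,c}$ and working with $\til{S}_{\omega,c}$, $\til{K}_{\omega,c}^{\alpha,\beta}$ rather than $S_{\omega,c}$, $K_{\omega,c}^{\alpha,\beta}$ directly. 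In the strict case $\omega>c^2/4$ the condition $\beta c\le 0$ in \eqref[eq1.7] is exactly what is needed to keep the $L^{2\sigma+2}$ coefficient in $\til{K}_{\omega,c}^{\alpha,\beta}$ from spoiling positivity, while $2\alpha\pm\beta>0$ secures positivity of the gradient and $L^2$ coefficients; I would want to track these inequalities explicitly to confirm that the leftover form after subtracting $\theta K_{\omega,c}^{\alpha,\beta}$ is genuinely coercive on $X_{\omega,c}$. A secondary technical point is justifying the continuity of $t\mapsto K_{\omega,c}^{\alpha,\beta}(u(t))$ and the non-crossing argument rigorously, for which I would lean on the local well-posedness and continuous dependence in $H^1$ established in \cite{HO16}.
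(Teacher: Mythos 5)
Your proposal is correct and follows essentially the same route as the paper: the invariance of $\scK_{\omega,c}^{\alpha,\beta,\pm}$ is obtained by the identical continuity-plus-uniqueness argument (a crossing of $\{K_{\omega,c}^{\alpha,\beta}=0\}$ at a level $S_{\omega,c}\le\mu_{\omega,c}$ forces the solution onto the solitary-wave orbit or onto $0$, contradicting uniqueness of the flow), and your coercivity step is precisely the paper's identity \eqref[eq3.1], $\alpha(2\sigma+2)S_{\omega,c}(\varphi)=K_{\omega,c}^{\alpha,\beta}(\varphi)+\til{J}_{\omega,c}^{\alpha,\beta}(e^{-\frac{c}{2}ix}\varphi)$, whose residual terms are nonnegative under \eqref[eq1.7] and $\sigma\ge1$ and control the gauged gradient. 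The one detail you leave to be checked, positivity of the leading coefficient $2\sigma\alpha+\beta$, follows from $2\alpha+\beta>0$, $\alpha>0$, and $\sigma\ge 1$, and the $H^1$ bound is then recovered from mass conservation exactly as in the paper.
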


Recently, Miao, Tang, and Xu independently obtained the similar results as Theorems \ref{thm1.1} and \ref{thm1.2} when $\sigma=1$ in \cite{MTX16pre_1}. We will compare their method with our argument in \remref[rem1.5].

We show that \thmref[thm1.2] gives us some interesting corollaries for (DNLS). 

\begin{corollary}
\label{cor1.3}
Let $\sigma=1$. If the initial data $u_0 \in H^1(\R )$ satisfies $\norm[u_0]_{L^2}^2 < 4\pi$, then the solution of \eqref[DNLS] is global.
\end{corollary}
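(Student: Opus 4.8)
The plan is to use the scaling invariance of \eqref[DNLS] to reduce the claim to a single application of \thmref[thm1.2] with the \emph{algebraic} solitary wave, i.e.\ the parameters $\omega=c^2/4$, $c>0$. Fix any $c>0$ and any $(\alpha,\beta)$ satisfying \eqref[eq1.7] in the case $\omega=c^2/4$ (for instance $(\alpha,\beta)=(1,-1)$, for which $2\alpha-\beta=3>0$, $2\alpha+\beta=1>0$, $\beta=-1<0$). Since \eqref[DNLS] is $L^2$-critical, the map $u_0\mapsto u_0^{\gamma}:=\gamma^{1/2}u_0(\gamma\,\cdot)$, which is the spatial part of the scaling $u_{\gamma}(t,x)=\gamma^{1/2}u(\gamma^2t,\gamma x)$, preserves the mass and sends a global solution to a global solution. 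Hence it suffices to prove that $u_0^{\gamma}\in\scK_{c^2/4,c}^{\alpha,\beta,+}$ for some $\gamma>0$.

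The first ingredient is the explicit observation that, for $\sigma=1$, the algebraic solitary wave has mass exactly $4\pi$ for every $c>0$: from \eqref[eq1.3] one has $\Phi_{c^2/4,c}(x)^2=4c/(c^2x^2+1)$, whose integral over $\R$ equals $4\pi$, so $\norm[\phi_{c^2/4,c}]_{L^2}^2=4\pi$. Thus the threshold $4\pi$ is precisely the mass of the solitary wave we use. I would also record the scaling behaviour of each term: under $u_0\mapsto u_0^{\gamma}$ one has $\norm[\partial_x u_0^{\gamma}]_{L^2}^2=\gamma^2\norm[\partial_x u_0]_{L^2}^2$, $N(u_0^{\gamma})=\gamma^2 N(u_0)$, $P(u_0^{\gamma})=\gamma P(u_0)$, and $\norm[u_0^{\gamma}]_{L^4}^4=\gamma\norm[u_0]_{L^4}^4$, while $\norm[u_0^{\gamma}]_{L^2}^2=\norm[u_0]_{L^2}^2$ is fixed.

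Next I would let $\gamma\to0$ in the two defining conditions of $\scK_{c^2/4,c}^{\alpha,\beta,+}$. Using the explicit formula for $K_{c^2/4,c}^{\alpha,\beta}$, where the choice $\omega=c^2/4$ collapses the mass coefficient to $c^2(2\alpha-\beta)/8$, every term except the mass term carries a positive power of $\gamma$, so $K_{c^2/4,c}^{\alpha,\beta}(u_0^{\gamma})\to\frac{c^2(2\alpha-\beta)}{8}\norm[u_0]_{L^2}^2>0$ by \eqref[eq1.7]. Likewise $S_{c^2/4,c}(u_0^{\gamma})=\gamma^2E(u_0)+\frac{c^2}{8}\norm[u_0]_{L^2}^2+\frac{c}{2}\gamma P(u_0)\to\frac{c^2}{8}\norm[u_0]_{L^2}^2$. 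Both quantities are continuous in $\gamma$, so as soon as these limiting inequalities are \emph{strict} it follows that $u_0^{\gamma}\in\scK_{c^2/4,c}^{\alpha,\beta,+}$ for all sufficiently small $\gamma>0$, and \thmref[thm1.2] then yields global existence of the solution with data $u_0^{\gamma}$, hence of the solution with data $u_0$.

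The main obstacle is therefore to verify $\frac{c^2}{8}\norm[u_0]_{L^2}^2\le\mu_{c^2/4,c}$. Since $\norm[u_0]_{L^2}^2<4\pi=\norm[\phi_{c^2/4,c}]_{L^2}^2$, it is enough to show $\frac{c^2}{8}\norm[\phi_{c^2/4,c}]_{L^2}^2\le\mu_{c^2/4,c}=S_{c^2/4,c}(\phi_{c^2/4,c})$, where I use \thmref[thm1.1] to identify $\mu_{c^2/4,c}$ with the soliton action; equivalently, $E(\phi_{c^2/4,c})+\frac{c}{2}P(\phi_{c^2/4,c})\ge0$. This I would establish by a direct computation on the explicit profile: since $\phi_{c^2/4,c}\in\scG_{c^2/4,c}$ we have $K_{c^2/4,c}^{\alpha,\beta}(\phi_{c^2/4,c})=0$ for \emph{all} $(\alpha,\beta)$, and pairing with $(\alpha,\beta)=(1,0)$ and $(0,1)$ gives the Nehari identity and a Pohozaev-type identity; combining these with $\norm[\phi_{c^2/4,c}]_{L^4}^4=8\pi c$ yields $E(\phi_{c^2/4,c})+\frac{c}{2}P(\phi_{c^2/4,c})=\frac{c^2\pi}{4}>0$. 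The strictness of this inequality, together with the strict bound $\norm[u_0]_{L^2}^2<4\pi$, is exactly what provides the room to absorb the $O(\gamma)$ and $O(\gamma^2)$ corrections and thereby place $u_0^{\gamma}$ in the invariant set for small $\gamma$.
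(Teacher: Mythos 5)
Your argument is correct, and after unwinding the scaling it coincides with the paper's proof. The paper fixes $u_0$ and sends $c\to\infty$; you fix $c$ and send $\gamma\to0$ in $u_0^\gamma=\gamma^{1/2}u_0(\gamma\cdot)$. These are conjugate: from the explicit formulae one checks $S_{\omega,c}(u_0^\gamma)=\gamma^2 S_{\omega/\gamma^2,\,c/\gamma}(u_0)$ and $K^{\alpha,\beta}_{\omega,c}(u_0^\gamma)=\gamma^2 K^{\alpha,\beta}_{\omega/\gamma^2,\,c/\gamma}(u_0)$, while $\mu_{(c/\gamma)^2/4,\,c/\gamma}=\gamma^{-2}\mu_{c^2/4,c}$, so the condition $u_0^\gamma\in\scK^{\alpha,\beta,+}_{c^2/4,c}$ for small $\gamma$ is \emph{exactly} the paper's condition $u_0\in\scK^{\alpha,\beta,+}_{c'^2/4,c'}$ for large $c'=c/\gamma$. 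The paper's formulation avoids the extra (correct, but additional) step that the scaling symmetry carries global solutions to global solutions. Two points to fix. First, your value $E(\phi_{c^2/4,c})+\tfrac{c}{2}P(\phi_{c^2/4,c})=\tfrac{c^2\pi}{4}$ is wrong: by Lemma~\ref{lem3.1}, at $\omega=c^2/4$ one has $E(\phi_{c^2/4,c})=-\tfrac{c}{2}\sqrt{4\omega-c^2}=0$ and $P(\phi_{c^2/4,c})=2\sqrt{4\omega-c^2}=0$, so the sum is $0$ and $\mu_{c^2/4,c}=\tfrac{\omega}{2}M(\phi_{c^2/4,c})=\tfrac{c^2\pi}{2}$ holds with \emph{equality} against $\tfrac{c^2}{8}\cdot4\pi$. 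This is harmless for the proof, since you only need the inequality $\ge 0$ and the room to absorb the $O(\gamma)$ and $O(\gamma^2)$ corrections comes entirely from the strict mass bound $\|u_0\|_{L^2}^2<4\pi$; but the closing sentence attributing the needed strictness to $E+\tfrac{c}{2}P>0$ should be deleted, as that quantity vanishes. Second, the limit $K^{\alpha,\beta}_{c^2/4,c}(u_0^\gamma)\to\tfrac{c^2(2\alpha-\beta)}{8}\|u_0\|_{L^2}^2$ is strictly positive only when $u_0\neq0$; the trivial case $u_0=0$ should be set aside at the outset, as in the paper.
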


Two proofs have been known for Corollary \ref{cor1.3}. One is obtained by Wu \cite{Wu15} and another one is obtained by Guo and Wu \cite{GW17}. We give another proof by \thmref[thm1.2].
We compare the methods of Wu \cite{Wu15} and Guo and Wu \cite{GW17}, which depend on the sharp Gagliardo--Nirenberg type inequality, with our variational argument. 
%
Using the following gauge transformation to the solution of \eqref[DNLS]
\begin{align}
\label{gauge}
u(t,x) = w(t,x)\exp\l( -\frac{i}{4} \int_{-\infty}^{x} |w(t,x)|^2 dx\r), 
\end{align}
then $w$ satisfies the following equation.
\begin{align}
\label{DNLS1}
\l\{
\begin{array}{ll}
i \partial_t w + \partial_x^2 w +\frac{i}{2} |w|^{2} \partial_x w -\frac{i}{2}w^2\partial_x \overline{w} +\frac{3}{16}|w|^4w =0 , & (t,x) \in \R \times \R, \\
w(0,x) = w_0(x), & x \in \R .
\end{array}
\r.
\end{align}
The energy and the momentum are transformed as follows.
\begin{align*}
&  {\cE}(w)= \frac{1}{2}\norm[\partial_x w]_{L^2}^2  -\frac{1}{32} \norm[w]_{L^6}^6 ,\\
&  {\cP}(w)= \re \int_{\R} i\partial_x w \overline{w} dx +\frac{1}{4}\norm[w]_{L^4}^4 .
\end{align*}
Hayashi and Ozawa \cite{HO92} used the following sharp Gagliardo--Nirenberg inequality
\begin{align}
\label{GN1}
\| f\|_{L^6}^6 \leq \frac{4}{\pi^2}\| f\|_{L^2}^4\| \partial_{x}f\|_{L^2}^2
\end{align}
in order to obtain a priori estimate in $\dot{H}^1(\R )$. We note that the optimizer for the inequality \eqref[GN1] is given by $Q:= \Phi_{1,0}$ and $Q$ satisfies the following elliptic equation.
\begin{align}
\label{ellipQ}
-Q''+Q -\frac{3}{16} Q^5 =0 .
\end{align}
In \cite{HO92}, they proved the $H^1$-solution of \eqref[DNLS] is global if the initial data $u_0$ satisfies $\| u_0 \|_{L^2}^2 =\| w_0 \|_{L^2}^2 < \| Q \|_{L^2}^2=2\pi $ (see also Weinstein \cite{W83}).
Wu \cite{Wu15} used not only the energy but also the momentum, and the following sharp Gagliardo--Nirenberg inequality
\begin{align}
\label{GN2}
\| f\|_{L^6}^6 \leq 3(2\pi )^{-\frac{2}{3}}\| f\|_{L^4}^{\frac{16}{3}}\| \partial_{x}f\|_{L^2}^{\frac{2}{3}}.
\end{align}
We note that the optimizer for the inequality \eqref[GN2] is given by $W:= \Phi_{\frac{1}{4},1}$ and $W$ satisfies the following elliptic equation.
\begin{align}
-W'' +\frac{1}{2}W^3 -\frac{3}{16} W^5 =0.
\end{align}
Wu \cite{Wu15} proved that the $H^1$-solution of \eqref[DNLS] is global if the initial data $u_0$ satisfies $\| u_0 \|_{L^2}^2 =\| w_0 \|_{L^2}^2 < \| W \|_{L^2}^2=4\pi $. His proof depends on contradiction argument. Supposing that there exists a time sequence $\{t_n\}_{n\in \N}$ with $t_n \to T_{\max}$, or $-T_{\min}$ such that $\norm[\partial_x w(t_n)]_{L^2} \to \infty$ as $n \to \infty$, where $(-T_{\min},T_{\max})$ is the maximal time interval, he mainly proved that $X=\norm[v(t_n)]_{L^4}^{8}/\norm[v(t_n)]_{L^6}^{6}$ satisfies $X^3-\norm[v]_{L^2}^2X^2+16\{3(2\pi )^{-\frac{2}{3}}\}^{-3}\norm[v]_{L^2}^2<0$, but this does not hold when $\norm[v]_{L^2}^2<4\pi$. 
On the other hand, Guo and Wu \cite{GW17} gave a priori estimate directly for \eqref[DNLS1] by the sharp Gagliardo--Nirenberg inequality \eqref[GN2]. More precisely, they showed in \cite[Lemma 2.1]{GW17} the inequality
\begin{align}
\label{eq1.15}
\cP(w) \geq \frac{1}{4} \norm[w]_{L^4}^4 \l( 1-\frac{\norm[w]_{L^2}}{2\sqrt{\pi}}\r) - \frac{8\sqrt{\pi}\cE(w)\norm[w]_{L^2}}{\norm[w]_{L^4}^4},
\end{align}
and thus $\Vert \partial_x w \Vert_{L^2}^2$ is bounded by $\cP$ and $\cE$ if $\Vert w \Vert_{L^2}^2<4\pi$ (see \cite[Lemma 2.2]{GW17}).
In our variational argument, we do not use contradiction argument, the gauge transformation like \eqref[gauge], and any sharp Gagliardo--Nirenberg inequality. 


We give the global existence result in the threshold case by \thmref[thm1.2]. 

\begin{corollary}
\label{cor1.4}
Let $\sigma=1$. We assume that the initial data $u_0 \in H^1(\R )$ satisfies $\norm[u_0]_{L^2}^2 = 4\pi$. If $P(u_0)<0$, then the solution of \eqref[DNLS] is global.
\end{corollary}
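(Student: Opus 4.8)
The plan is to exhibit $u_0$ as a member of $\scK_{c^2/4,c}^{\alpha,\beta,+}$ for a suitably large parameter $c>0$ and then to invoke \thmref[thm1.2]. Since $\norm[u_0]_{L^2}^2=4\pi$ is exactly the $L^2$-critical threshold, the right parameters to use are the degenerate ones $\omega=c^2/4$ with $c>0$: the associated solitary waves $\phi_{c^2/4,c}$ all carry mass precisely $4\pi$. I would first fix a pair $(\alpha,\beta)$ satisfying the second line of \eqref[eq1.7] (for definiteness $(\alpha,\beta)=(1,-1)$, so that $2\alpha-\beta=3>0$, $2\alpha+\beta=1>0$, and $\beta=-1<0$), and regard $c$ as the free parameter to be sent to $+\infty$.

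The first substantive step is to compute $\mu_{c^2/4,c}$ exactly. By \thmref[thm1.1] we have $\mu_{c^2/4,c}=S_{c^2/4,c}(\phi_{c^2/4,c})$. Using the $L^2$-critical scaling $\phi_{c^2/4,c}(x)=c^{1/2}\phi_{1/4,1}(cx)$ together with the homogeneities $E(\phi_{c^2/4,c})=c^2E(\phi_{1/4,1})$, $M(\phi_{c^2/4,c})=M(\phi_{1/4,1})=4\pi$ and $P(\phi_{c^2/4,c})=cP(\phi_{1/4,1})$, one reduces the computation to that of $\mu_{1/4,1}$ via $\mu_{c^2/4,c}=c^2\mu_{1/4,1}$. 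With the explicit profile $W=\Phi_{1/4,1}(x)=2(x^2+1)^{-1/2}$ and the Nehari and Pohozaev relations for the elliptic equation $-W''+\tfrac12 W^3-\tfrac{3}{16}W^5=0$, a direct integral computation gives $\norm[W]_{L^6}^6=3\norm[W]_{L^4}^4=24\pi$, whence $\mu_{1/4,1}=\pi/2$ and therefore $\mu_{c^2/4,c}=\pi c^2/2$. Equivalently, the critical ground state satisfies $E(\phi_{c^2/4,c})+\tfrac c2 P(\phi_{c^2/4,c})=0$, so that $\mu_{c^2/4,c}=\tfrac{c^2}{8}M(\phi_{c^2/4,c})$.

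With this value in hand both defining inequalities of $\scK^{+}$ become transparent as $c\to\infty$. Because $\norm[u_0]_{L^2}^2=4\pi$, the quadratic-in-$c$ contributions cancel exactly: $S_{c^2/4,c}(u_0)-\mu_{c^2/4,c}=E(u_0)+\tfrac{c^2}{8}\cdot4\pi+\tfrac c2 P(u_0)-\tfrac{\pi c^2}{2}=E(u_0)+\tfrac c2 P(u_0)$, which tends to $-\infty$ as $c\to\infty$ since $P(u_0)<0$; hence $S_{c^2/4,c}(u_0)\le\mu_{c^2/4,c}$ for all large $c$. For the second inequality I would insert $\omega=c^2/4$ into the explicit formula for $K_{\omega,c}^{\alpha,\beta}$; the coefficient of $\norm[u_0]_{L^2}^2$ collapses to $\tfrac{(2\alpha-\beta)c^2}{8}$, so the leading term of $K_{c^2/4,c}^{\alpha,\beta}(u_0)$ as $c\to\infty$ is $\tfrac{(2\alpha-\beta)\pi c^2}{2}>0$ (the remaining terms being at most linear in $c$), giving $K_{c^2/4,c}^{\alpha,\beta}(u_0)\ge0$ for all large $c$ as well. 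Choosing any $c$ beyond both thresholds places $u_0\in\scK_{c^2/4,c}^{\alpha,\beta,+}$, and \thmref[thm1.2] then yields global existence together with the uniform $H^1$ bound.

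The crux — and the only place where the two hypotheses are genuinely used — is the exact identity $\mu_{c^2/4,c}=\tfrac{\pi c^2}{2}=\tfrac{c^2}{8}\norm[u_0]_{L^2}^2$, which is responsible for the precise cancellation of the $c^2$ terms in the first inequality. Had we only $\norm[u_0]_{L^2}^2<4\pi$, the surviving $c^2$ coefficient would be strictly negative and the first inequality would hold automatically (this is in essence \corref[cor1.3]); exactly at the threshold this quadratic term disappears and the residual linear term $\tfrac c2 P(u_0)$ must drive the estimate, which is precisely why $P(u_0)<0$ is required. Establishing $\mu_{1/4,1}=\pi/2$ rigorously, via the scaling reduction and the Pohozaev/Nehari identities (or a direct evaluation of the integrals for $W$), is thus the main technical point; everything else is elementary asymptotics in $c$.
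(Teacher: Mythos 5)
Your proposal is correct and follows essentially the same route as the paper: show that $u_0\in\scK_{c^2/4,c}^{\alpha,\beta,+}$ for sufficiently large $c>0$ by exploiting the exact cancellation of the $c^2$ terms (since $\norm[u_0]_{L^2}^2=4\pi=M(\phi_{c^2/4,c})$) so that $P(u_0)<0$ forces $S_{c^2/4,c}(u_0)\leq\mu_{c^2/4,c}$, while $K_{c^2/4,c}^{\alpha,\beta}(u_0)\to+\infty$, and then apply \thmref[thm1.2]. The only cosmetic difference is that you recompute $\mu_{c^2/4,c}=\pi c^2/2$ by scaling to $c=1$ and evaluating the integrals of $W=\Phi_{1/4,1}$, whereas the paper reads it off from the closed-form expression for $S_{\omega,c}(\phi_{\omega,c})$ in \lemref[lem3.1].
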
 

After submitting the present paper, Guo pointed out that Corollary \ref{cor1.4} can be obtained by \eqref[eq1.15]. We also give the proof by \eqref[eq1.15] for the reader's convenience.

The following corollary means that there exist global solutions with any large mass.

\begin{corollary}
\label{cor1.5}
Let $\sigma \geq 1$. Given $\psi \in H^1(\mathbb{R})$, and set the initial data as $u_{0,c}=e^{\frac{c}{2}ix}\psi$. Then, there exists $c_0 >0$ such that if $c\geq c_0$, then the corresponding solution $u_c$ of \eqref[gDNLS] is global. 
\end{corollary}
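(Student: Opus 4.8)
The plan is to exhibit, for every sufficiently large $c$, a triple $(\omega,\alpha,\beta)$ satisfying \eqref[eq1.5] and \eqref[eq1.7] for which $u_{0,c}\in\scK_{\omega,c}^{\alpha,\beta,+}$, and then to invoke \thmref[thm1.2]. Throughout I would take $(\alpha,\beta)=(1,0)$, which satisfies \eqref[eq1.7] whenever $\omega>c^2/4$ (here $2\alpha-\beta=2\alpha+\beta=2>0$ and $\beta c=0$), fix an arbitrary $\omega_0>0$, and set $\omega=c^2/4+\omega_0$, so that \eqref[eq1.5] holds with $X_{\omega,c}=H^1(\R)$. If $\psi\equiv0$ the solution is identically zero, hence trivially global, so I assume $\psi\neq0$. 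Since $e^{-\frac{c}{2}ix}u_{0,c}=\psi$, the gauge identities give $S_{\omega,c}(u_{0,c})=\til{S}_{\omega,c}(\psi)$ and $K_{\omega,c}^{1,0}(u_{0,c})=\til{K}_{\omega,c}^{1,0}(\psi)$, so everything reduces to two estimates for the tilde functionals evaluated at the fixed function $\psi$.

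The sign condition $\til{K}_{\omega,c}^{1,0}(\psi)\ge0$ is the easy half. From the explicit formula,
\[ \til{K}_{\omega,c}^{1,0}(\psi)=\norm[\partial_x\psi]_{L^2}^2+\omega_0\norm[\psi]_{L^2}^2+\tfrac{c}{2}\norm[\psi]_{L^{2\sigma+2}}^{2\sigma+2}-N(\psi), \]
where only the coefficient $c/2$ depends on $c$ and $N(\psi)$ is a fixed real number. Because $\psi\neq0$ forces $\norm[\psi]_{L^{2\sigma+2}}>0$, the right-hand side tends to $+\infty$ as $c\to\infty$; in particular it is nonnegative for all large $c$.

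The substance is the action inequality $\til{S}_{\omega,c}(\psi)\le\mu_{\omega,c}$. The left side equals
\[ \til{S}_{\omega,c}(\psi)=\tfrac12\norm[\partial_x\psi]_{L^2}^2+\tfrac{\omega_0}{2}\norm[\psi]_{L^2}^2+\tfrac{c}{2(2\sigma+2)}\norm[\psi]_{L^{2\sigma+2}}^{2\sigma+2}-\tfrac{1}{2\sigma+2}N(\psi), \]
which grows only linearly in $c$. For the right side I would first record a scaling law for the minimum: the explicit formulae \eqref[eq1.2]--\eqref[eq1.3] give $\phi_{\lambda^2\omega,\lambda c}(x)=\lambda^{1/(2\sigma)}\phi_{\omega,c}(\lambda x)$, and a term-by-term change of variables in $\til{S}$ shows that each of its four terms scales by the common factor $\lambda^{1/\sigma+1}$; combined with $\mu_{\omega,c}=S_{\omega,c}(\phi_{\omega,c})$ from \thmref[thm1.1] this yields
\[ \mu_{\lambda^2\omega,\lambda c}=\lambda^{\frac1\sigma+1}\,\mu_{\omega,c}. \]
Taking $\lambda=c$ with reference pair $(\omega/c^2,1)$ gives $\mu_{\omega,c}=c^{\frac1\sigma+1}\mu_{\omega/c^2,1}$, and my choice makes $\omega/c^2=\tfrac14+\omega_0/c^2$. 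Eliminating $N$ through the constraint $\til{K}^{1,0}=0$ shows $\mu_{a,1}=\tfrac{\sigma}{2\sigma+2}\bigl(\norm[\partial_x\til{\phi}_{a,1}]_{L^2}^2+(a-\tfrac14)\norm[\til{\phi}_{a,1}]_{L^2}^2\bigr)$ with $\til{\phi}_{a,1}\neq0$, so $a\mapsto\mu_{a,1}$ is strictly positive and, via convergence of the profiles $\til{\phi}_{a,1}$ to the nonzero algebraic soliton as $a\downarrow\tfrac14$, is bounded below by some $\delta>0$ on $[\tfrac14,\tfrac14+\omega_0]$. Hence $\mu_{\omega,c}\ge\delta\,c^{\frac1\sigma+1}$ for $c\ge1$; since $\sigma\ge1$ gives $\tfrac1\sigma+1>1$, this dominates the linear growth of $\til{S}_{\omega,c}(\psi)$, so $\til{S}_{\omega,c}(\psi)\le\mu_{\omega,c}$ for all large $c$.

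Combining the two estimates produces $c_0>0$ such that $u_{0,c}\in\scK_{\omega,c}^{1,0,+}$ for every $c\ge c_0$, and \thmref[thm1.2] then gives global existence of $u_c$. I expect the main obstacle to be precisely the control of $\mu_{\omega,c}$ as $c\to\infty$: the pair $(\omega,c)=(c^2/4+\omega_0,c)$ approaches the degenerate line $\omega=c^2/4$, so the only nontrivial point is to rule out the collapse $\mu_{a,1}\to0$ as $a\downarrow\tfrac14$. Everything else is bookkeeping of the scaling, but that boundary lower bound — equivalently, the convergence of $\til{\phi}_{a,1}$ to $\til{\phi}_{1/4,1}$ in $\dot H^1(\R)$ — is the step that requires genuine justification.
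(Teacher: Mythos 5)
Your strategy is sound and, modulo one lemma, complete; but it is genuinely different from the paper's, and the difference is instructive. The paper works exactly on the massless line: it takes $\omega=c^2/4$ with a pair $(\alpha,\beta)$ satisfying the second line of \eqref[eq1.7], i.e.\ $\beta<0$ (note that your choice $(1,0)$ would not be admissible there, which is precisely why the paper cannot use the Nehari functional in this regime), and uses the exact scaling $\phi_{c^2/4,c}(x)=c^{1/(2\sigma)}\phi_{1/4,1}(cx)$ to get the identity $\mu_{c^2/4,c}=S_{c^2/4,c}(\phi_{c^2/4,c})=c^{1+\frac1\sigma}S_{1/4,1}(\phi_{1/4,1})$ --- a single fixed positive constant times $c^{1+\frac{1}{\sigma}}$. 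Since $\til S_{c^2/4,c}(\psi)$ has no $L^2$-term and grows only linearly in $c$, while $\til K^{\alpha,\beta}_{c^2/4,c}(\psi)\to+\infty$, both membership conditions for $\scK^{\alpha,\beta,+}_{c^2/4,c}$ hold for large $c$ with no limiting regime to control. By instead perturbing off the line, $\omega=c^2/4+\omega_0$, you gain the convenience of the Nehari functional $K^{1,0}_{\omega,c}$ but pay for it with exactly the step you flagged: a uniform lower bound $\inf_{a\in(1/4,\,1/4+\epsilon]}\mu_{a,1}\ge\delta>0$. That step is fillable, but not quite by the route you propose: you should not argue via convergence of $\til\phi_{a,1}$ in a topology controlling the $L^2$-norm, since $\norm[\phi_{a,1}]_{L^2}\to\infty$ as $a\downarrow\frac14$ when $\sigma\ge2$ (the algebraic soliton is not in $L^2$, as the paper notes). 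What saves you is that in your identity $\mu_{a,1}=\frac{\sigma}{2\sigma+2}\bigl(\norm[\partial_x\til\phi_{a,1}]_{L^2}^2+(a-\frac14)\norm[\til\phi_{a,1}]_{L^2}^2\bigr)$ the second term is nonnegative, so it suffices to bound $\norm[\partial_x\til\phi_{a,1}]_{L^2}^2$ from below; this follows from Fatou's lemma together with the pointwise convergence $\Phi_{a,1}\to\Phi_{1/4,1}$ read off from \eqref[eq1.3]. With that lemma supplied your proof closes; the paper's choice of the degenerate pair $(\,c^2/4,\,c\,)$ simply makes the whole issue disappear.
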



\begin{remark}
The existence of blow-up solutions in finite time is still an open problem. It might be a very interesting problem whether finite time blow-up occurs when the initial data $u_0$ satisfies $\| u_0\|_{L^2}^2 =4\pi$ and $P(u_0) >0$.
\end{remark}
\subsection{Compare DNLS with mass-critical NLS}

The equation \eqref[DNLS] is $L^2$-critical in the sense that the equation and $L^2$-norm are invariant under the scaling transformation
\[ u_{\gamma}(t,x):=\gamma^{\frac{1}{2}} u(\gamma^2 t,\gamma x), \quad \gamma>0. \]
The same invariance holds for the quintic nonlinear Schr\"{o}dinger equation in one dimensional space:
\begin{equation}
\label{NLS}
i \partial_t u +\partial_x^2 u + \frac{3}{16}|u|^{4}u=0, \quad (t,x)\in \R \times \R.
\end{equation} 
This equation has the same energy of the equation \eqref[DNLS1]. It is known that \eqref[NLS] is locally well-posed in the energy space $H^1(\R)$ and the solution is global if the initial data $u_0$ satisfies $\norm[u_0]_{L^2}^2 < \norm[Q]_{L^2}^2$, where $Q$ is the ground state of the same elliptic equation \eqref[ellipQ]. 
The condition $\norm[u_0]_{L^2}^2 < \norm[Q]_{L^2}^2$ is equivalent to the condition obtained by the variational argument. In this argument, the momentum is not essential since the equation \eqref[NLS] is invariant under the Galilean transformation and thus we may assume that the momentum is zero. On the other hand, \eqref[DNLS] is not invariant under the Galilean transformation. Therefore, the condition by the variational argument is better than the assumption $\norm[u_0]_{L^2}^2 <\norm[W]_{L^2}^2= 4\pi$. Indeed, the momentum and the parameter $c$ play important roles in Corollaries \ref{cor1.4} and \ref{cor1.5}.

\subsection{Idea of Proofs}
\label{sec1.4}

The proof of \thmref[thm1.1] is based on the method of Colin and Ohta \cite{CO06} (concentration compactness method). They characterized the solitary waves for $\omega>c^2/4$ when $\sigma =1$ by the Nehari functional $I_{\omega,c}^{1,0}$. However, in the case $\omega=c^2/4$ and $c>0$, we cannot apply their argument directly since the $L^2$-norm in $I_{\omega,c}^{1,0}$ disappears by \eqref[eq1.6]. Therefore, we introduce the new functional $K_{\omega,c}^{\alpha,\beta}$ for $(\alpha,\beta)$ satisfying \eqref[eq1.7]. We can use the  $L^{2\sigma+2}$-norm instead of the $L^2$-norm by using $K_{\omega,c}^{\alpha,\beta}$. That is why we introduce the function space $X_{\omega,c}$ as $\dot{H}^1\cap L^{2\sigma+2}$ in the massless case (i.e. $\omega=c^2/4$ and $c>0$). Noting that the solitary waves $\phi_{c^2/4,c}$ do not belong to $L^2(\R)$ when $\sigma\geq2$, the function space $X_{\omega,c}$ is essential to obtain the characterization of the solitary waves $\phi_{c^2/4,c}$ (cf. \cite{KW16pre}). Based on the argument of Colin and Ohta \cite{CO06}, we characterize the solitary waves $\phi_{c^2/4,c}$ by $K_{\omega,c}^{\alpha,\beta}$. By the conservation laws and the characterization of the solitary waves, we get a priori estimate and thus we obtain  \thmref[thm1.2]. The corollaries follow from \thmref[thm1.2]. In their proof, the parameter $c$ plays an important role. More precisely, taking $c>0$ large, we get the corollaries. At last, we emphasize that we do not use the sharp Gagliardo--Nirenberg inequality and we do not apply the gauge transformation to \eqref[gDNLS] since the equation after applying the transformation is complicated unlike \eqref[DNLS]. 

\begin{remark}
\label{rem1.5} 
Miao, Tang, and Xu \cite{MTX16pre_1} treated the case of $\sigma=1$. 
They considered \eqref[DNLS1] by using the gauge transformation and defined the action by $\cS_{\omega,c}:=\cE+ \omega M/2 + c\cP/2$. They applied concentration compactness argument to give the variational characterization of the solitary waves. Then, they use the Nehari functional $\cK_{\omega,c}$ derived from the action  $\cS_{\omega,c}$. The explicit formula of $\cK_{\omega,c}$ is 
\[ \cK_{\omega,c}(w):=\norm[\partial_x w]_{L^2}^2 -\frac{3}{16} \norm[w]_{L^6}^6 + \omega \norm[w]_{L^2}^2 + c \re \int_{\R} i \partial_x w \overline{w} dx +\frac{c}{2}\norm[w]_{L^4}^4. \]
They defined 
\[ \scA_{\omega,c}^{\pm}:=\l\{ \varphi \in H^1(\R):\cS_{\omega,c}(\varphi)\leq \cS_{\omega,c}(\phi_{\omega,c}), \cK_{\omega,c}(\varphi) \gtreqless 0 \r\} \]
and they also showed that $\scA_{\omega,c}^{\pm}$ are invariant under the flow of \eqref[DNLS1] and the solution to \eqref[DNLS1] is global if $w_0 \in \scA_{\omega,c}^{+}$ for some $(\omega,c)$.  The functional $\cK_{\omega,c}$ is useful to characterize the solitary waves $\phi_{c^2/4,c}$ since it contains $L^4$-norm. Namely, one can use the Nehari functional by the gauge transformation. On the other hand, we cannot use Nehari functional when we do not apply the gauge transformation, and thus we introduce the new functionals $K_{\omega,c}^{\alpha,\beta}$.
\end{remark}

The rest of the present paper is as follows. In \secref[sec2.1], we prepare some lemmas to obtain the characterization of the solitary waves and prove a priori estimate (see \eqref[eq2.2]). In \secref[sec2.2], we give the characterization of the solitary waves $\phi_{c^2/4,c}$. We remark that the characterization of the solitary waves $\phi_{\omega,c}$ for $\omega>c^2/4$ can be obtained in the same manner as in Colin and Ohta \cite{CO06} and then we omit the proof. \secref[sec3] is devoted to the proof of \thmref[thm1.2] and the corollaries. In \appref[appA], we show that there is no non-trivial solution of the nonlinear elliptic equation \eqref[elliptic] if $\omega<c^2/4$, or $\omega=c^2/4$ and $c\leq 0$.

\section{Variational Characterization of the solitary waves}

\subsection{Preliminaries}
\label{sec2.1}
We define function spaces
\begin{align*}
\til{\scM}_{\omega,c}^{\alpha,\beta}&:=\{\psi \in X_{\omega,c}\setminus\{0\}: \til{S}_{\omega,c}(\psi)=\mu_{\omega,c}^{\alpha,\beta},\ \til{K}_{\omega,c}^{\alpha,\beta}(\psi)=0  \},
\\
\til{\scG}_{\omega,c}&:=\{\psi \in X_{\omega,c}\setminus\{0\}: \til{S}_{\omega,c}'(\psi)=0 \}.
\end{align*}

In this section, we prove the following proposition, which gives \thmref[thm1.1].

\begin{proposition}
\label{prop2.1}
Let $(\omega,c)$ satisfy \eqref[eq1.5] and $(\alpha,\beta)$ satisfy \eqref[eq1.7]. Then, we have 
\[ \til{\scM}_{\omega,c}^{\alpha,\beta}=\til{\scG}_{\omega,c}=\{e^{i\theta} e^{-\frac{c}{2}ix} \phi_{\omega,c}(\cdot-y): \theta \in [0,2\pi), y\in \R \}. \]
\end{proposition}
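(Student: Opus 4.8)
The plan is to adapt the concentration--compactness argument of Colin and Ohta \cite{CO06}, working directly with the gauged action $\til{S}_{\omega,c}$ and the scaling constraint $\til{K}_{\omega,c}^{\alpha,\beta}$ on $X_{\omega,c}$. Since the case $\omega>c^2/4$ goes through exactly as in \cite{CO06}, I focus on the massless case $\omega=c^2/4$, $c>0$, where $X_{\omega,c}=\dot H^1(\R)\cap L^{2\sigma+2}(\R)$. The first step is to record the coercivity identity that makes the whole scheme work. Because $\til{K}_{\omega,c}^{\alpha,\beta}(\psi)=0$ lets me subtract any multiple of $\til{K}_{\omega,c}^{\alpha,\beta}$ from $\til{S}_{\omega,c}$ for free, I choose the multiple $\frac{1}{(2\sigma+2)\alpha}$ that cancels the indefinite term $N(\psi)$; a direct computation then gives, on the constraint set,
\[ \til{S}_{\omega,c}(\psi) = \frac{2\sigma\alpha+\beta}{2(2\sigma+2)\alpha}\|\partial_x\psi\|_{L^2}^2 + \frac{2\sigma\alpha-\beta}{2(2\sigma+2)\alpha}\left(\omega-\frac{c^2}{4}\right)\|\psi\|_{L^2}^2 - \frac{\beta c}{2(2\sigma+2)^2\alpha}\|\psi\|_{L^{2\sigma+2}}^{2\sigma+2}. \]
Condition \eqref[eq1.7] forces $\alpha>0$ and, since $\sigma\ge1$, both $2\sigma\alpha\pm\beta>0$ and $-\beta c\ge0$, with $-\beta c>0$ in the massless case. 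Hence every coefficient is nonnegative, the gradient coefficient is strictly positive, and in the massless case the $L^{2\sigma+2}$ coefficient is strictly positive as well. This is exactly why the $\beta$-scaling is needed there: it restores coercivity in the full $\dot H^1\cap L^{2\sigma+2}$ norm that is lost when the $L^2$ term drops out.

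Next I would establish existence of a minimizer. The identity shows $\mu_{\omega,c}^{\alpha,\beta}\ge0$ and that every minimizing sequence is bounded in $X_{\omega,c}$; inserting a Gagliardo--Nirenberg estimate for the super-quadratic term $N(\psi)$ into the relation $\til{K}_{\omega,c}^{\alpha,\beta}(\psi)=0$ yields a uniform lower bound on the $X_{\omega,c}$-norm of elements of the constraint set, whence $\mu_{\omega,c}^{\alpha,\beta}>0$. I then run the concentration--compactness principle on a minimizing sequence $\{\psi_n\}$, the only loss of compactness coming from spatial translations (the scales are pinned by the non-scale-invariant action together with the coercivity identity). Vanishing is excluded because a norm is bounded below, and dichotomy is excluded by strict subadditivity of $\mu_{\omega,c}^{\alpha,\beta}$; after translating, $\psi_n$ converges weakly and then, since the constraint passes to the limit and $\til{S}_{\omega,c}$ coincides on the constraint with the weakly lower semicontinuous right-hand side above, strongly to a minimizer $\psi_\ast\in\til{\scM}_{\omega,c}^{\alpha,\beta}$.

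I would then show a minimizer is a solitary wave. By the Lagrange multiplier theorem $\til{S}_{\omega,c}'(\psi_\ast)=\eta\,(\til{K}_{\omega,c}^{\alpha,\beta})'(\psi_\ast)$ for some $\eta\in\R$. Writing $g(\lambda):=\til{S}_{\omega,c}((\psi_\ast)_\lambda^{\alpha,\beta})$, the group property of the scaling gives $g'(\lambda)=\til{K}_{\omega,c}^{\alpha,\beta}((\psi_\ast)_\lambda^{\alpha,\beta})$, so that $g'(0)=0$; eliminating $N(\psi_\ast)$ through $g'(0)=0$ and using \eqref[eq1.7] shows $g''(0)<0$, i.e. the constraint is nondegenerate, and pairing the multiplier equation with the generator $\alpha\psi_\ast-\beta x\partial_x\psi_\ast$ forces $\eta=0$. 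Thus $\til{S}_{\omega,c}'(\psi_\ast)=0$, giving $\til{\scM}_{\omega,c}^{\alpha,\beta}\subseteq\til{\scG}_{\omega,c}$. To identify $\til{\scG}_{\omega,c}$, I set $\varphi=e^{\frac{c}{2}ix}\psi$, which turns the Euler--Lagrange equation into $-\varphi''+\omega\varphi+ic\varphi'-i|\varphi|^{2\sigma}\varphi'=0$; a modulus--phase reduction sends $|\varphi|$ to the positive solution of \eqref[elliptic] and determines the phase as in \eqref[eq1.2], and uniqueness of the positive solution of \eqref[elliptic] up to translation (ODE integration, yielding \eqref[eq1.3]) identifies $\til{\scG}_{\omega,c}$ with $\{e^{i\theta}e^{-\frac{c}{2}ix}\phi_{\omega,c}(\cdot-y)\}$. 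Finally I close the loop: each such solitary wave is a critical point, hence satisfies $\til{K}_{\omega,c}^{\alpha,\beta}=0$ (pair $\til{S}_{\omega,c}'=0$ with the generator) and, by phase and translation invariance of $\til{S}_{\omega,c}$, all share one action value; since $\til{\scM}_{\omega,c}^{\alpha,\beta}\subseteq\til{\scG}_{\omega,c}$ is nonempty, that value is $\mu_{\omega,c}^{\alpha,\beta}$, so every solitary wave lies in $\til{\scM}_{\omega,c}^{\alpha,\beta}$, and the three sets coincide.

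The step I expect to be the main obstacle is the concentration--compactness argument in the massless case: working in the homogeneous, scaling-critical space $\dot H^1\cap L^{2\sigma+2}$, proving strict subadditivity of $\mu_{\omega,c}^{\alpha,\beta}$ and excluding dichotomy while controlling the translation parameters is the genuine analytic difficulty, and it is precisely what the coercivity identity of the first paragraph is designed to make tractable (cf. the role of $X_{\omega,c}$ noted in \cite{KW16pre}).
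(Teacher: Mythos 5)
Your overall architecture coincides with the paper's: the coercivity identity you write is exactly the paper's relation \eqref[eq2.2] restricted to the constraint set, your Lagrange-multiplier step (compute $\partial_\lambda^2\til{S}_{\omega,c}((\psi_*)_\lambda^{\alpha,\beta})|_{\lambda=0}=\partial_\lambda\til{K}_{\omega,c}^{\alpha,\beta}((\psi_*)_\lambda^{\alpha,\beta})|_{\lambda=0}<0$ and pair with the generator to kill $\eta$) is precisely Lemma 2.7.1, the modulus--phase reduction plus ODE uniqueness identifying $\til{\scG}_{\omega,c}$ is Lemma 2.6.1 together with Appendix A, and the closing argument (all solitary waves share one action value, so nonemptiness of the minimizer set gives the reverse inclusion) is the paper's next lemma verbatim. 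One small technical point you should not skip in the massless case: in the gauge reduction the phase integral must be taken over $[0,x]$ rather than $(-\infty,x]$, since an element of $\dot H^1\cap L^{2\sigma+2}$ need not lie in $L^{2\sigma}(\R)$.

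The genuine divergence, and the one real gap, is in the existence of a minimizer. You propose Lions concentration--compactness with vanishing excluded by a norm lower bound and \emph{dichotomy excluded by strict subadditivity of $\mu_{\omega,c}^{\alpha,\beta}$}. But $\mu_{\omega,c}^{\alpha,\beta}$ is an infimum over the single Nehari-type manifold $\{\til{K}_{\omega,c}^{\alpha,\beta}=0\}$; it carries no mass (or other) parameter in which a subadditivity inequality $\mu(m)<\mu(m_1)+\mu(m_2)$ could even be formulated, so the tool you name does not apply as stated. The paper avoids this entirely: it uses the Lieb compactness lemma (after showing $\limsup_n\|\psi_n\|_{L^{4\sigma+2}}>0$) to extract a nonzero weak limit $v$ of translates, the Brezis--Lieb lemma to split $\til{K}$ and $\til{J}:=\alpha(2\sigma+2)\til{S}-\til{K}$ along the sequence, and then the elementary but crucial observation that $\til{K}(\psi)<0$ implies $\til{J}(\psi)>\alpha(2\sigma+2)\mu$ (rescale $\psi$ down to the constraint manifold). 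If $\til{K}(v)>0$ then $\til{K}(v_n-v)\to-\til{K}(v)<0$, forcing $\til{J}(v_n-v)>\alpha(2\sigma+2)\mu$ and hence $\til{J}(v)\le 0$, a contradiction; so $\til{K}(v)\le 0$, and weak lower semicontinuity of $\til{J}$ then pins $\til{J}(v)=\alpha(2\sigma+2)\mu$, upgrades the convergence to strong, and shows $v$ is a minimizer. This replacement of subadditivity by the sign argument on $\til{K}$ of the remainder is the missing mechanism in your plan; also note that the constraint does not simply ``pass to the limit'' under weak convergence --- establishing $\til{K}(v)=0$ is itself the output of this argument, not an input.
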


Indeed, \thmref[thm1.1] follows from \propref[prop2.1] and the following properties:
\begin{align*}
\varphi \in \scM_{\omega,c}^{\alpha,\beta} &\Leftrightarrow e^{-\frac{c}{2}ix} \varphi \in \til{\scM}_{\omega,c}^{\alpha,\beta},
\\
\varphi \in  \scG_{\omega,c} &\Leftrightarrow e^{-\frac{c}{2}ix} \varphi \in \til{\scG}_{\omega,c},
\end{align*}
where we note that $\til{S}_{\omega,c}'(e^{-\frac{c}{2}ix}\varphi)=e^{-\frac{c}{2}ix}S_{\omega,c}'(\varphi)$ holds. 

To prove \propref[prop2.1], we prepare some basic lemmas. We have the Gagliardo--Nirenberg type inequality. 
\begin{lemma}
Let $p\geq1$. 
We have the following estimate.
\begin{align}
\label{eq2.1}
\norm[f]_{L^\infty}^{2p}
& \leq 2p\norm[f]_{L^{4p-2}}^{2p-1}\norm[\partial_x f]_{L^2}.
\end{align}
\end{lemma}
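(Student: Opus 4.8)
The plan is to prove the pointwise-to-norm estimate \eqref[eq2.1] by the standard fundamental-theorem-of-calculus argument, treating the quantity $|f|^{2p}$ as the object to be controlled. First I would reduce to the case of smooth, rapidly decaying $f$ (say $f \in C_c^{\infty}(\R)$ or Schwartz), since the general case $f$ with $f \in L^{4p-2}(\R)$ and $\partial_x f \in L^2(\R)$ follows by a density and approximation argument once the inequality is established on a dense class. For such $f$, the key observation is that $|f(x)|^{2p} \to 0$ as $x \to \pm\infty$, so for any fixed $x$ we may write $|f(x)|^{2p}$ as an integral of its derivative from $-\infty$ (or, symmetrically, exploit both tails).

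The central computation is to differentiate $|f|^{2p}$. Writing $|f|^{2p} = (f\overline{f})^{p}$ and differentiating, one gets $\partial_x |f|^{2p} = 2p\,|f|^{2p-2}\,\re(\overline{f}\,\partial_x f)$, so that $\bigl|\partial_x |f|^{2p}\bigr| \leq 2p\,|f|^{2p-1}\,|\partial_x f|$. Integrating from $-\infty$ to a point $x$ then yields
\[
|f(x)|^{2p} \leq 2p \int_{\R} |f|^{2p-1}\,|\partial_x f|\,dx.
\]
Taking the supremum over $x$ on the left and applying the Cauchy--Schwarz inequality on the right, splitting the integrand as $|f|^{2p-1}\cdot|\partial_x f|$ with the two factors placed in $L^2(\R)$, gives
\[
\norm[f]_{L^\infty}^{2p} \leq 2p\,\norm[|f|^{2p-1}]_{L^2}\,\norm[\partial_x f]_{L^2} = 2p\,\norm[f]_{L^{4p-2}}^{2p-1}\,\norm[\partial_x f]_{L^2},
\]
where the last equality is the identity $\norm[|f|^{2p-1}]_{L^2}^2 = \int_{\R} |f|^{4p-2}\,dx = \norm[f]_{L^{4p-2}}^{4p-2}$, so that $\norm[|f|^{2p-1}]_{L^2} = \norm[f]_{L^{4p-2}}^{2p-1}$. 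This is exactly \eqref[eq2.1].

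I expect the only genuine subtlety to lie in the justification that $|f|^{2p}$ is absolutely continuous and vanishes at infinity for $f$ in the relevant function class, and in the density step that transfers the estimate from smooth functions to all $f$ with finite right-hand side. For $p \geq 1$ the exponent $4p-2 \geq 2$ and the map $f \mapsto |f|^{2p-1}$ is well-behaved enough that these approximation issues are routine; one must only check that the $L^{4p-2}$ and $\dot{H}^1$ control suffices to make the boundary terms vanish, which follows since a function with $\partial_x f \in L^2$ and $f \in L^{4p-2}$ is continuous and tends to zero at infinity. No sharp constant is claimed, so the clean Cauchy--Schwarz step suffices and no optimization or rearrangement is needed.
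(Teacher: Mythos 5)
Your proof is correct and follows essentially the same route as the paper: write $|f(x)|^{2p}$ as the integral of its derivative from $-\infty$, use $\partial_x|f|^{2p}=2p|f|^{2p-2}\re(\overline{f}\,\partial_x f)$, apply Cauchy--Schwarz, and identify $\norm[|f|^{2p-1}]_{L^2}=\norm[f]_{L^{4p-2}}^{2p-1}$ before taking the supremum. The additional remarks on density and vanishing at infinity are fine but not part of the paper's (terser) argument.
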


\begin{proof}
By the H\"{o}lder inequlity, we get
\begin{align*}
|f(x)|^{2p}
& =\int_{-\infty}^{x} \frac{d}{dx} (|f(y)|^{2p}) dy
\\
&=\int_{-\infty}^{x} 2p |f(y)|^{2p-2} \re (\overline{f(y)} (\partial_x f)(y))dy
\\
& \leq 2p \norm[|f|^{2p-1}]_{L^2} \norm[\partial_x f]_{L^2}
\\
& = 2p \norm[f]_{L^{4p-2}}^{2p-1} \norm[\partial_x f]_{L^2}.
\end{align*}
Taking the supremum, we obtain \eqref[eq2.1]. 
\end{proof}

We have the Lieb compactness lemma. See \cite{Lie83} for $p=2$ and \cite[Lemma 2.1]{BFV14} for more general setting. 
\begin{lemma}
\label{lem2.3}
Let $p\geq 2$ and $d\in\N$. 
Let $\{f_n\}$ be a bounded sequence in $\dot{H}^1(\R^d)\cap L^{p}(\R^d)$. Assume that there exists $q\in(p,2^*)$ such that $\limsup_{n \to \infty} \norm[f_n]_{L^q}>0$. Then, there exists $\{y_n\}$ and $f \in \dot{H}^1(\R^d)\cap L^{p}(\R^d)\setminus \{0\}$ such that $\{f_n(\cdot-y_n)\}$ has a subsequence that converges to $f$ weakly in $\dot{H}^1(\R^d)\cap L^{p}(\R^d)$. 
\end{lemma}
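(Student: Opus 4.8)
The plan is to run the concentration--compactness ``vanishing alternative'' of Lions, adapted to the homogeneous space $\dot{H}^1(\R^d)\cap L^p(\R^d)$, and then to upgrade the resulting non-vanishing of mass into a nonzero weak limit by a local compactness argument. The whole point is to locate where the sequence retains a fixed amount of $L^p$-mass and to recenter there.

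First I would reduce everything to the following non-vanishing claim: after passing to a subsequence there are points $y_n\in\R^d$ and $\delta>0$ with $\int_{B(y_n,1)}|f_n|^p\,dx\ge\delta$ for all $n$. This is the contrapositive of the vanishing lemma: if $\sup_{y\in\R^d}\int_{B(y,1)}|f_n|^p\,dx\to0$ then $\norm[f_n]_{L^q}\to0$. To prove the vanishing lemma I would cover $\R^d$ by unit cubes $\{Q_j\}$ of a fixed lattice with finite overlap; on each $Q_j$ interpolate $\norm[f]_{L^q(Q_j)}\le\norm[f]_{L^p(Q_j)}^{1-\lambda}\norm[f]_{L^{2^*}(Q_j)}^{\lambda}$ and bound the last factor by the Sobolev inequality on the bounded domain $Q_j$, namely $\norm[f]_{L^{2^*}(Q_j)}\lesssim\norm[\nabla f]_{L^2(Q_j)}+\norm[f]_{L^p(Q_j)}$, where $p\ge2$ is used so that the local $L^p$-norm controls the local $L^2$-norm. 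Raising to the power $q$, summing over $j$, and using finite overlap together with the global bound in $\dot{H}^1\cap L^p$, I would factor out a power $\big(\sup_j\norm[f]_{L^p(Q_j)}\big)^{s}$; because $q$ is strictly between $p$ and $2^*$ the exponent $s$ is strictly positive and the remaining sum stays bounded, which forces $\norm[f_n]_{L^q}\to0$.

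Granting the non-vanishing, I would set $g_n:=f_n(\cdot-y_n)$. Since the $\dot{H}^1$- and $L^p$-norms are translation invariant, $\{g_n\}$ is still bounded in the reflexive space $\dot{H}^1(\R^d)\cap L^p(\R^d)$ (here $1<p<\infty$ and $\dot H^1$ is Hilbert), so a subsequence satisfies $g_n\rightharpoonup f$ weakly in $\dot{H}^1\cap L^p$. It only remains to show $f\ne0$. On the fixed ball $B(0,1)$ the sequence $\{g_n\}$ is bounded in $H^1(B(0,1))$ (again using $p\ge2$, so that $L^p(B(0,1))\hookrightarrow L^2(B(0,1))$), so by Rellich--Kondrachov the embedding $H^1(B(0,1))\hookrightarrow\hookrightarrow L^p(B(0,1))$ is compact since $p<2^*$, and along a further subsequence $g_n\to f$ strongly in $L^p(B(0,1))$. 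Hence $\int_{B(0,1)}|f|^p=\lim_n\int_{B(0,1)}|g_n|^p=\lim_n\int_{B(y_n,1)}|f_n|^p\ge\delta>0$, so $f\ne0$.

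I expect the vanishing lemma of the second paragraph to be the main obstacle: the delicate point is the exponent bookkeeping in the interpolation-plus-covering estimate, making sure that a strictly positive power of the local $L^p$-supremum survives for the entire range $q\in(p,2^*)$ and uniformly in the dimension $d$. The recentering and the Rellich step are then routine; the only subtlety there, the passage from local $L^p$ to local $L^2$ control, is precisely what the hypothesis $p\ge2$ provides.
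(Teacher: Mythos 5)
You should first note that the paper does not prove this lemma at all: it is recorded as a known result, with references to Lieb \cite{Lie83} for $p=2$ and to \cite[Lemma 2.1]{BFV14} for the general setting. So there is no in-paper proof to match, and your self-contained argument via Lions' vanishing alternative is a legitimate and genuinely different route from the cited sources: Lieb's original argument bounds from below the measure of the superlevel sets $\{|f_n|>\varepsilon\}$ using the Sobolev inequality and then locates a unit cube carrying a fixed fraction of that measure, thereby avoiding the interpolation-and-covering bookkeeping altogether. Your recentering, reflexivity, and Rellich steps are all correct, and you correctly identify $p\ge 2$ as what converts local $L^p$ control into local $L^2$ control, both in the cube Sobolev inequality and in the compactness step on $B(0,1)$.

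The one place that needs repair is exactly the step you flagged. After writing $\Vert f\Vert_{L^q(Q_j)}^q\le\Vert f\Vert_{L^p(Q_j)}^{(1-\lambda)q}\Vert f\Vert_{H^1(Q_j)}^{\lambda q}$ and summing over the cubes, the ``remaining sum'' can be controlled by the two globally bounded quantities $\sum_j\Vert\nabla f\Vert_{L^2(Q_j)}^2$ and $\sum_j\Vert f\Vert_{L^p(Q_j)}^p$ only when the exponents match, which requires $\lambda q\le 2$; for $q$ close to $2^*$ one has $\lambda q>2$, and the leftover factor is a sum of the form $\sum_j\Vert f\Vert_{L^p(Q_j)}^{r}$ with $r<p$, which is \emph{not} bounded by the global $L^p$ norm when $p>2$ (a convergent series raised termwise to a power less than one may diverge), so no strictly positive power of $\sup_j\Vert f\Vert_{L^p(Q_j)}$ survives with a bounded remainder. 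The standard fix is to run the cube argument only for the single exponent $q_0=p+2-2p/2^*$, for which $\lambda_0 q_0=2$ and everything sums cleanly against $\Vert\nabla f\Vert_{L^2}^2$ and $\Vert f\Vert_{L^p}^p$ with a positive power of $\sup_j\Vert f\Vert_{L^p(Q_j)}$ extracted, and then to deduce $\Vert f_n\Vert_{L^q}\to0$ for every other $q\in(p,2^*)$ by global interpolation, using that $f_n$ is bounded in $L^p\cap L^{2^*}$ by Sobolev embedding (for $d\le 2$, replace $2^*$ by an arbitrarily large finite exponent via Gagliardo--Nirenberg). With that adjustment your proof is complete.
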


We have the Brezis--Lieb lemma (see \cite{BL83}). 
\begin{lemma}
\label{lem2.4}
Let $d\in \N$ and $1< p < \infty$. Let $\{f_n\}$ be a bounded sequence in $L^p(\R^d)$ and $f_n \to f$ a.e. in $\R^d$. Then we have 
\[ \norm[f_n]_{L^p}^p - \norm[f_n-f]_{L^p}^p - \norm[f]_{L^p}^p \to 0. \]
If $\{f_n\}$ is a bounded sequence in $L^2(\R^d)$ and $f_n$ converges to $f$ weakly in $L^2(\R^d)$, then the statement as $p=2$ holds. 
\end{lemma}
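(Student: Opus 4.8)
The plan is to reduce the first assertion to showing that the nonnegative functions
\[ W_n := \bigl| \, |f_n|^p - |f_n - f|^p - |f|^p \, \bigr| \]
satisfy $\int_{\R^d} W_n \to 0$. First I would record that $f \in L^p(\R^d)$: since $f_n \to f$ a.e.\ and $\{f_n\}$ is bounded in $L^p$, Fatou's lemma gives $\norm[f]_{L^p} \le \liminf_n \norm[f_n]_{L^p} < \infty$. Writing $g_n := f_n - f$, we then have $g_n \to 0$ a.e.\ and $\{g_n\}$ bounded in $L^p$; set $M := \sup_n \norm[g_n]_{L^p}^p < \infty$. It suffices to control $\int \bigl| |g_n+f|^p - |g_n|^p - |f|^p \bigr|$.

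The analytic core is an elementary pointwise inequality: for each $\eps > 0$ there is a constant $C_\eps = C(\eps,p)$ such that
\[ \bigl| \, |a+b|^p - |a|^p \, \bigr| \le \eps |a|^p + C_\eps |b|^p \qquad \text{for all } a,b \in \C. \]
I would prove this by splitting on the size of $|b|$ relative to $|a|$. When $|b| \ge \delta |a|$ one bounds the left-hand side crudely by $|a+b|^p + |a|^p \le C(\delta)|b|^p$. When $|b| < \delta |a|$ one applies the mean value theorem to $x \mapsto |x|^p$ on $\R^2 \cong \C$, whose gradient has magnitude $p|x|^{p-1}$ and whose defining segment avoids the origin, to get $\bigl||a+b|^p - |a|^p\bigr| \le p(1+\delta)^{p-1}|a|^{p-1}|b| \le p(1+\delta)^{p-1}\delta\,|a|^p$; choosing $\delta$ small makes the coefficient $\le \eps$. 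Applying this with $a = g_n$, $b = f$ gives $\bigl| |g_n+f|^p - |g_n|^p \bigr| \le \eps|g_n|^p + C_\eps|f|^p$, hence
\[ W_n \le \eps|g_n|^p + (C_\eps + 1)|f|^p. \]

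The step I expect to be the crux is the truncation argument that turns this bound into $L^1$-convergence, since the term $\eps|g_n|^p$ has no reason to be dominated by a fixed integrable function. Following Brezis and Lieb, I would set
\[ W_n^\eps := \bigl( W_n - \eps|g_n|^p \bigr)^+. \]
By the displayed bound $0 \le W_n^\eps \le (C_\eps+1)|f|^p \in L^1(\R^d)$, and since $g_n \to 0$ a.e.\ forces $W_n \to 0$ a.e., we have $W_n^\eps \to 0$ a.e.; the dominated convergence theorem then yields $\int W_n^\eps \to 0$. Because $W_n \le W_n^\eps + \eps|g_n|^p$, we obtain $\int W_n \le \int W_n^\eps + \eps M$, so $\limsup_n \int W_n \le \eps M$. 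Letting $\eps \downarrow 0$ closes the argument for $1 < p < \infty$.

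Finally, the $p=2$ statement under weak convergence is a direct Hilbert-space computation needing no pointwise analysis. Expanding the inner product,
\[ \norm[f_n]_{L^2}^2 - \norm[f_n-f]_{L^2}^2 - \norm[f]_{L^2}^2 = 2\,\re \langle f_n - f, f\rangle, \]
and since $f_n \rightharpoonup f$ weakly in $L^2$ we have $\langle f_n, f\rangle \to \langle f, f\rangle = \norm[f]_{L^2}^2$, so the right-hand side tends to $0$.
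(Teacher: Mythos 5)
Your proof is correct, and since the paper gives no proof of this lemma — it simply cites the original Br\'ezis--Lieb paper — your argument is in effect the same as the paper's: the pointwise inequality $\bigl|\,|a+b|^p-|a|^p\,\bigr|\le \eps|a|^p+C_\eps|b|^p$, the truncation $W_n^\eps=(W_n-\eps|g_n|^p)^+$ dominated by $(C_\eps+1)|f|^p$, dominated convergence, and the limit $\eps\downarrow 0$ constitute exactly the classical Br\'ezis--Lieb proof, with $f\in L^p$ correctly secured by Fatou at the outset. Your treatment of the $p=2$ weak-convergence case via the identity $\norm[f_n]_{L^2}^2-\norm[f_n-f]_{L^2}^2-\norm[f]_{L^2}^2=2\re\langle f_n-f,f\rangle$ is also correct and is the standard Hilbert-space argument.
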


A direct calculation gives us the following relation. 
\begin{lemma} We have
\begin{align}
\label{eq2.2}
\alpha (2\sigma+2) \til{S}_{\omega,c}(\psi)
&=\til{K}_{\omega,c}^{\alpha,\beta}(\psi)
+\frac{2\sigma\alpha+\beta}{2}\norm[\partial_x \psi]_{L^2}^2 
\\ \notag
&\quad +\l( \omega-\frac{c^2}{4}\r)\frac{2\sigma\alpha-\beta}{2}\norm[\psi]_{L^2}^2 -\frac{\beta c}{2(2\sigma+2)}\norm[\psi]_{L^{2\sigma+2}}^{2\sigma+2}.
\end{align}
\end{lemma}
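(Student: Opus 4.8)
The plan is to read \eqref[eq2.2] as a purely algebraic identity among the four homogeneous functionals $\norm[\partial_x \psi]_{L^2}^2$, $\l(\omega-\frac{c^2}{4}\r)\norm[\psi]_{L^2}^2$, $\norm[\psi]_{L^{2\sigma+2}}^{2\sigma+2}$, and $N(\psi)$, and to verify it by matching coefficients. Both $\til{S}_{\omega,c}$ and $\til{K}_{\omega,c}^{\alpha,\beta}$ are already written as explicit linear combinations of these four quantities in the formulae recorded above, so no further analytic information about $\psi$ (no integration by parts, no inequality) is needed beyond those expressions.

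First I would substitute the explicit formulae into the left-hand side $\alpha(2\sigma+2)\til{S}_{\omega,c}(\psi)-\til{K}_{\omega,c}^{\alpha,\beta}(\psi)$ and gather the coefficient of each building block in turn. For $\norm[\partial_x \psi]_{L^2}^2$ the coefficient is $\frac{(2\sigma+2)\alpha}{2}-\frac{2\alpha-\beta}{2}=\frac{2\sigma\alpha+\beta}{2}$; for $\l(\omega-\frac{c^2}{4}\r)\norm[\psi]_{L^2}^2$ it is $\frac{(2\sigma+2)\alpha}{2}-\frac{2\alpha+\beta}{2}=\frac{2\sigma\alpha-\beta}{2}$; and for $\norm[\psi]_{L^{2\sigma+2}}^{2\sigma+2}$ it is $\frac{\alpha c}{2}-\frac{\{(2\sigma+2)\alpha+\beta\}c}{2(2\sigma+2)}=-\frac{\beta c}{2(2\sigma+2)}$. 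These are precisely the three coefficients appearing on the right-hand side of \eqref[eq2.2], so once they are confirmed the identity follows.

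There is no genuine analytic obstacle here: the computation is pure bookkeeping. The one point worth isolating, and arguably the structural content of the lemma, is that the nonlinear term cancels. Its coefficient on the left is $\alpha(2\sigma+2)\cdot\l(-\frac{1}{2\sigma+2}\r)-(-\alpha)=-\alpha+\alpha=0$, so $N(\psi)$ does not survive on the right. This cancellation is exactly what makes the lemma useful: it expresses $\til{S}_{\omega,c}$, modulo the scaling functional $\til{K}_{\omega,c}^{\alpha,\beta}$, as a combination of norms whose signs are controlled by \eqref[eq1.7], which is what yields the coercivity and the a priori estimate exploited in the subsequent variational argument.
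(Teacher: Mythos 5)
Your proposal is correct and is exactly the paper's argument: the authors state the lemma as following from ``a direct calculation,'' and that calculation is precisely the coefficient matching you carry out, including the cancellation of the $N(\psi)$ term. Nothing further is needed.
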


We denote the difference $\alpha (2\sigma+2) \til{S}_{\omega,c}(\psi)- \til{K}_{\omega,c}^{\alpha,\beta}(\psi)$ by
\[  \til{J}_{\omega,c}^{\alpha,\beta}(\psi):=\frac{2\sigma\alpha+\beta}{2}\norm[\partial_x \psi]_{L^2}^2 
+\l( \omega-\frac{c^2}{4}\r)\frac{2\sigma\alpha-\beta}{2}\norm[\psi]_{L^2}^2 -\frac{\beta c}{2(2\sigma+2)}\norm[\psi]_{L^{2\sigma+2}}^{2\sigma+2}.\]

\subsection{Variational Characterization}
\label{sec2.2}
First, we consider the case of $\omega=c^2/4$ and $c>0$. 
Then, $(\alpha,\beta)$ satisfies 
\begin{equation}
\label{eq2.3}
2\alpha-\beta>0 , \quad 2\alpha+\beta>0, \text{ and } \beta < 0. 
\end{equation}
In the sequel section, we often omit the indices $\omega,c,\alpha,\beta$ for simplicity.

\begin{lemma} 
\label{lem2.6.1}
The following equality holds.
\[ \til{\scG}_{\omega,c}=\{e^{i\theta_0}e^{-\frac{c}{2}ix} \phi_{\omega,c}(\cdot-x_0): \theta_0 \in [0,2\pi), x_0\in \R \}.\]
\end{lemma}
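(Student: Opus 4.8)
The plan is to characterize $\til{\scG}_{\omega,c}$ in the massless case $\omega = c^2/4$, $c>0$ by translating the condition $\til{S}_{\omega,c}'(\psi)=0$ into the elliptic equation noted in the final item of the first Remark, namely
\[
-\psi'' + \l(\omega - \frac{c^2}{4}\r)\psi + \frac{c}{2}|\psi|^{2\sigma}\psi - i|\psi|^{2\sigma}\psi' = 0,
\]
and then to solve this ODE explicitly. Since $\omega = c^2/4$, the linear $\psi$ term drops out, which is precisely the feature that forces us to work in $X_{\omega,c} = \dot H^1 \cap L^{2\sigma+2}$ rather than $H^1$. First I would verify that $\psi \in \til{\scG}_{\omega,c}$ is equivalent to $\varphi := e^{\frac{c}{2}ix}\psi \in \scG_{\omega,c}$, i.e.\ $S_{\omega,c}'(\varphi)=0$, using the gauge identity $\til{S}_{\omega,c}'(e^{-\frac{c}{2}ix}\varphi) = e^{-\frac{c}{2}ix}S_{\omega,c}'(\varphi)$ recorded just after \propref[prop2.1]. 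The critical-point equation $S_{\omega,c}'(\varphi)=0$ is exactly $-\varphi'' + \omega\varphi + ic\varphi' - i|\varphi|^{2\sigma}\varphi' = 0$, the equation satisfied by the solitary-wave profile $\phi_{\omega,c}$.

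Next I would solve this complex ODE. The standard device is to write $\varphi = \Phi\exp(i\Theta)$ with $\Phi = |\varphi| \geq 0$ and real phase $\Theta$, substitute, and separate real and imaginary parts. The imaginary part yields a first-order relation determining $\Theta'$ in terms of $\Phi$; integrating gives the phase
\[
\Theta(x) = \frac{c}{2}x - \frac{1}{2\sigma+2}\int_0^x \Phi(y)^{2\sigma}\,dy + \text{const},
\]
matching \eqref[eq1.2]. Feeding this back into the real part reduces the system to the single real elliptic equation \eqref[elliptic], which in the massless case $\omega = c^2/4$ becomes
\[
-\Phi'' + \frac{c}{2}\Phi^{2\sigma+1} - \frac{2\sigma+1}{(2\sigma+2)^2}\Phi^{4\sigma+1} = 0.
\]
I would then show that within $\dot H^1 \cap L^{2\sigma+2}$ the only nontrivial nonnegative solution up to translation is the explicit profile $\Phi_{c^2/4,c}$ given in the second case of \eqref[eq1.3]; this is an ODE uniqueness statement obtained by a first-integral/phase-plane analysis (multiply by $\Phi'$, integrate, and identify the homoclinic orbit decaying at infinity, pinning down the integration constant by the $\dot H^1$ and $L^{2\sigma+2}$ membership). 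The invariances $\varphi \mapsto e^{i\theta_0}\varphi$ and $\varphi \mapsto \varphi(\cdot - x_0)$ then account for the full set on the right-hand side.

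The main obstacle I anticipate is the careful ODE/regularity argument in the massless, nonintegrable regime: because $\Phi_{c^2/4,c}$ decays only polynomially (like $|x|^{-1/\sigma}$) rather than exponentially, one must argue directly in $\dot H^1 \cap L^{2\sigma+2}$, check that critical points genuinely lie in this space, justify the formal manipulations (in particular that $\Phi > 0$ everywhere so that the polar decomposition and the phase integral are well defined and the solution is smooth away from potential zeros), and rule out other homoclinic orbits or solutions failing the decay required for membership. A secondary technical point is confirming that $\mu_{\omega,c}^{\alpha,\beta}$ is attained and that every minimizer solves $\til{S}_{\omega,c}'(\psi)=0$ via a Lagrange multiplier whose value is forced to be the right constant; however, since the present lemma only asserts the description of $\til{\scG}_{\omega,c}$, the essential content is the ODE classification, and the variational identification with $\til{\scM}_{\omega,c}^{\alpha,\beta}$ can be deferred to the remaining steps of \propref[prop2.1].
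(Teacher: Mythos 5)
Your overall plan runs parallel to the paper's: reduce $\til{S}_{\omega,c}'(\psi)=0$ to the complex second-order ODE, strip off the phase to arrive at the real massless equation \eqref[eq2.4] (equivalently \eqref[elliptic] with $\omega=c^2/4$), and then invoke the ODE uniqueness of the profile $\Phi_{\omega,c}$, which is exactly what the paper delegates to Appendix A. Your computation of the phase ($\Theta'=\frac{c}{2}-\frac{1}{2\sigma+2}\Phi^{2\sigma}$ from the imaginary part, fed back into the real part) is correct and reproduces \eqref[eq1.2]--\eqref[elliptic]. You also correctly take the phase integral over $[0,x]$ rather than $(-\infty,x]$, which the paper points out is necessary in the massless case since membership of $|\psi|^{2\sigma}$ in $L^1$ near $-\infty$ is unclear.

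The one genuine gap is the step you flag but do not close: the polar decomposition $\varphi=\Phi e^{i\Theta}$ with $\Phi=|\varphi|$, and the division by $\Phi^2$ needed to solve for $\Theta'$, require knowing in advance that a critical point never vanishes (and that $\Theta$ can be chosen $C^1$), which is not available a priori. The paper circumvents this entirely with a different device: it sets $\psi=\Phi\exp\bigl(-\frac{i}{2\sigma+2}\int_0^x|\Phi(y)|^{2\sigma}dy\bigr)$ with $\Phi$ \emph{complex-valued} (so the change of unknown is always well defined, since the unimodular factor depends only on $|\psi|$), derives the equation $\Phi''=A(\Phi)\Phi$ with a common real potential $A(\Phi)$ for $f=\re\Phi$ and $g=\im\Phi$, and observes that the Wronskian satisfies $(fg'-gf')'=0$; the decay coming from $f,g\in\dot H^1(\R)\cap L^{2\sigma+2}(\R)$ forces $fg'-gf'=\im(\overline{\Phi}\Phi')\equiv 0$, which simultaneously kills the extra term in the $\Phi$-equation and shows $\Phi$ is a constant phase times a real solution of \eqref[eq2.4]. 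If you want to keep your polar-coordinate route, you must first rule out zeros, e.g.\ by deriving the first integral $|\varphi'|^2=\omega|\varphi|^2$ (take $\re$ of the equation times $\overline{\varphi'}$ and use decay along a sequence) so that any zero of $\varphi$ is a double zero and ODE uniqueness gives $\varphi\equiv 0$; as written, "justify that $\Phi>0$ everywhere" is the missing proof, not an incidental technicality. The remaining ingredients you list (regularity by bootstrap, classification of the real profile by a first-integral/phase-plane argument in $\dot H^1\cap L^{2\sigma+2}$) match Appendix A of the paper.
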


\begin{proof}
Since $e^{-\frac{c}{2}ix}\phi_{\omega,c}$ satisfies $\til{S}_{\omega,c}'(e^{-\frac{c}{2}ix}\phi_{\omega,c})=e^{-\frac{c}{2}ix}S_{\omega,c}'(\phi_{\omega,c})=0$, we have
$\til{\scG}_{\omega,c} \supset \{e^{i\theta_0}e^{-\frac{c}{2}ix} \phi_{\omega,c}(\cdot-x_0): \theta_0 \in [0,2\pi), x_0\in \R \}$.
We prove $\til{\scG}_{\omega,c} \subset \{e^{i\theta_0}e^{-\frac{c}{2}ix} \phi_{\omega,c}(\cdot-x_0): \theta_0 \in [0,2\pi), x_0\in \R \}$.
Letting $\psi \in \til{\scG}_{\omega,c}$ and 
\[ \psi(x) 
= \Phi(x) \exp\l( - \frac{i}{2\sigma +2} \int_{0}^{x} |\Phi(y)|^{2\sigma} dy \r),\]
then $\Phi$ is a solution of
\[ -\Phi''  +\frac{c}{2} |\Phi|^{2\sigma} \Phi - \frac{2\sigma+1}{(2\sigma+2)^2} |\Phi|^{4\sigma}\Phi + \frac{\sigma}{\sigma+1} |\Phi|^{2\sigma-2} \im (\overline{\Phi}\Phi')\Phi=0. \]
Setting $A(\Phi):=\frac{c}{2} |\Phi|^{2\sigma} - \frac{2\sigma+1}{(2\sigma+2)^2} |\Phi|^{4\sigma}+ \frac{\sigma}{\sigma+1} |\Phi|^{2\sigma-2} \im (\overline{\Phi}\Phi')$, $f:=\re \Phi$, and $g:=\im \Phi$, we have
\[ f ''=A(\Phi)f, \quad g ''=A(\Phi) g.\] 
Therefore, 
\begin{align*}
(f g' - g f' ) '
= f g '' - g f''
=f A(\Phi) g - g A(\Phi) f
=A(\Phi) f g - A(\Phi) fg
=0.
\end{align*}
Since $f,g \in \dot{H}^1(\R)\cap L^{2\sigma+2}(\R)$, we obtain $f  g ' - g f ' =0$. On the other hand, $f g ' - g f ' = \re \Phi \im \Phi' - \im \Phi \re \Phi' 
= \im (\overline{\Phi}\Phi')$. Thus, $\im (\overline{\Phi}\Phi')= 0$ for any $x \in \R$. Therefore, $\Phi$ satisfies 
\begin{equation}  
\label{eq2.4}
-\Phi''  +\frac{c}{2} |\Phi|^{2\sigma} \Phi - \frac{2\sigma+1}{(2\sigma+2)^2} |\Phi|^{4\sigma}\Phi =0.
\end{equation}
Therefore, there exist $\theta_0$ and $x_0$ such that $\Phi=e^{i\theta_0} \Phi_{\omega,c}(\cdot-x_0)$ since $\Phi_{\omega,c}$ is the unique solution of \eqref[eq2.4] up to translation and phase (see Appendix A). 
This implies $\psi(x) =e^{i\theta}e^{-\frac{c}{2}ix} \phi_{\omega,c}(x-x_0)$.
\end{proof}

\begin{remark}
According to \cite{CO06}, it looks natural to take the integral on the infinite interval $(-\infty,x]$ in the gauge transformation as follows
\[ \psi(x) = \Phi(x) \exp\l( - \frac{i}{2\sigma +2} \int_{-\infty}^{x} |\Phi(y)|^{2\sigma} dy \r). \]
However, in the massless case, it is not clear whether $\psi \in \til{\scG}_{\omega,c}$ belongs to $L^{2\sigma}(\R)$ or not. This is why we take the integral on the finite interval $[0,x]$ instead of $(-\infty,x]$. 
\end{remark}

\begin{lemma}
\label{lem2.7.1}
We have $\til{\scG}_{\omega,c} \supset \til{\scM}_{\omega,c}^{\alpha,\beta}$. 
\end{lemma}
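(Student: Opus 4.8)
The plan is to regard any $\psi \in \til{\scM}_{\omega,c}^{\alpha,\beta}$ as a minimizer of $\til{S}_{\omega,c}$ subject to the single constraint $\til{K}_{\omega,c}^{\alpha,\beta}=0$ and to run the standard Lagrange multiplier argument. Since $\til{S}_{\omega,c}$ and $\til{K}_{\omega,c}^{\alpha,\beta}$ are $C^1$ on $X_{\omega,c}$, once $\psi$ is a regular point of the constraint, i.e. $(\til{K}_{\omega,c}^{\alpha,\beta})'(\psi)\neq 0$, there is $\eta\in\R$ with $\til{S}_{\omega,c}'(\psi)=\eta\,(\til{K}_{\omega,c}^{\alpha,\beta})'(\psi)$. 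The whole lemma then reduces to proving $\eta=0$, which forces $\til{S}_{\omega,c}'(\psi)=0$, that is $\psi\in\til{\scG}_{\omega,c}$.

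To pin down $\eta$, I would test the Euler--Lagrange identity against the infinitesimal generator of the scaling $\psi_\lambda^{\alpha,\beta}$, namely $\partial_\lambda \psi_\lambda^{\alpha,\beta}|_{\lambda=0}=\alpha\psi-\beta x\partial_x\psi$. The key is the built-in relation $\til{K}_{\omega,c}^{\alpha,\beta}(\psi)=\langle \til{S}_{\omega,c}'(\psi),\,\alpha\psi-\beta x\partial_x\psi\rangle$, which vanishes on the constraint. Pairing $\til{S}_{\omega,c}'(\psi)=\eta\,(\til{K}_{\omega,c}^{\alpha,\beta})'(\psi)$ with $\alpha\psi-\beta x\partial_x\psi$ thus gives $0=\til{K}_{\omega,c}^{\alpha,\beta}(\psi)=\eta\,\partial_\lambda \til{K}_{\omega,c}^{\alpha,\beta}(\psi_\lambda^{\alpha,\beta})|_{\lambda=0}$, so it suffices to show the $\lambda$-derivative on the right is nonzero; this simultaneously verifies the regularity $(\til{K}_{\omega,c}^{\alpha,\beta})'(\psi)\neq 0$ needed above.

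The remaining computation is explicit. Each term of $\til{K}_{\omega,c}^{\alpha,\beta}$ is homogeneous under $\psi\mapsto\psi_\lambda^{\alpha,\beta}$ with factors $e^{(2\alpha-\beta)\lambda}$ on $\norm[\partial_x\psi]_{L^2}^2$, $e^{\{(2\sigma+2)\alpha+\beta\}\lambda}$ on $\norm[\psi]_{L^{2\sigma+2}}^{2\sigma+2}$, and $e^{(2\sigma+2)\alpha\lambda}$ on $N(\psi)$ (here $\omega=c^2/4$ kills the $L^2$ term). Differentiating at $\lambda=0$ and using the constraint $\til{K}_{\omega,c}^{\alpha,\beta}(\psi)=0$ to eliminate $N(\psi)$ (which is legitimate since $2\alpha\pm\beta>0$ gives $\alpha>0$) yields
\[ \partial_\lambda \til{K}_{\omega,c}^{\alpha,\beta}(\psi_\lambda^{\alpha,\beta})|_{\lambda=0} = -\frac{(2\alpha-\beta)(2\sigma\alpha+\beta)}{2}\norm[\partial_x\psi]_{L^2}^2 + \frac{\beta c\{(2\sigma+2)\alpha+\beta\}}{2(2\sigma+2)}\norm[\psi]_{L^{2\sigma+2}}^{2\sigma+2}. \]
Under \eqref[eq2.3] together with $\sigma\geq 1$ and $c>0$ one checks $2\alpha-\beta>0$, $2\sigma\alpha+\beta\geq 2\alpha+\beta>0$, $(2\sigma+2)\alpha+\beta>0$, and $\beta<0$, so both terms are nonpositive; since $\psi\neq 0$ in $\dot{H}^1\cap L^{2\sigma+2}$ forces $\norm[\psi]_{L^{2\sigma+2}}>0$, the right-hand side is strictly negative. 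Hence $\eta=0$ and $\til{S}_{\omega,c}'(\psi)=0$.

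The main obstacle I anticipate is not the sign bookkeeping but the functional-analytic justification of the Lagrange multiplier step in the massless space $X_{\omega,c}=\dot{H}^1\cap L^{2\sigma+2}$: one must confirm that $\til{S}_{\omega,c}$ and $\til{K}_{\omega,c}^{\alpha,\beta}$ are $C^1$ there and that $\alpha\psi-\beta x\partial_x\psi$ is an admissible test direction, since the weight $x$ interacts with the merely homogeneous Sobolev control. Once that is in place, the scaling computation and the sign conditions \eqref[eq2.3] close the argument cleanly, and the same scheme applies verbatim (with the extra $L^2$ term retained) in the case $\omega>c^2/4$.
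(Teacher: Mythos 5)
Your proposal is correct and follows essentially the same route as the paper: a Lagrange multiplier $\eta$ with $\til{S}_{\omega,c}'(\psi)=\eta(\til{K}_{\omega,c}^{\alpha,\beta})'(\psi)$, tested against the scaling generator $\alpha\psi-\beta x\partial_x\psi$ so that $0=\til{K}_{\omega,c}^{\alpha,\beta}(\psi)=\eta\,\partial_\lambda\til{K}_{\omega,c}^{\alpha,\beta}(\psi_\lambda^{\alpha,\beta})|_{\lambda=0}$, followed by the same homogeneity computation and sign analysis under \eqref[eq2.3] showing the derivative is strictly negative, whence $\eta=0$. The paper likewise only remarks that the pairing with $\alpha\psi-\beta x\partial_x\psi$ is "justified by a density argument," so your flagged functional-analytic concern matches what the authors themselves leave brief.
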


\begin{proof}
This is obvious if $\til{\scM}= \emptyset$. We consider the case of $\til{\scM} \neq \emptyset$. Let $\psi \in  \til{\scM}$. Since $\psi$ is a minimizer, there exists a Lagrange multiplier $\eta \in \R$ such that $\til{S}'(\psi)=\eta \til{K}'(\psi)$. 
Then,  we get 
\begin{align*} 
0=  \til{K}(\psi)
=\tbra[\til{S}'(\psi),\partial_\lambda  \psi_{\lambda}^{\alpha,\beta}|_{\lambda=0}]
=\eta \tbra[\til{K}'(\psi), \partial_\lambda \psi_{\lambda}^{\alpha,\beta}|_{\lambda=0}]
=\eta \partial_{\lambda} \til{K}(\psi_{\lambda}^{\alpha,\beta})|_{\lambda=0},
\end{align*}
where we remark that this is justified by a density argument. 
By  a direct calculation, we obtain
\begin{align*}
&\partial_{\lambda}\til{K}(\psi_{\lambda}^{\alpha,\beta})|_{\lambda=0}
\\
&= \frac{(2\alpha-\beta)^2}{2} \norm[\partial_x \psi]_{L^2}^2 - \frac{\{(2\sigma+2)\alpha+\beta\}^2}{2(2\sigma+2)}\norm[\psi]_{L^{2\sigma+2}}^{2\sigma+2} -\frac{\{(2\sigma+2)\alpha\}^2}{2\sigma+2}N(\psi)
\\
&=\frac{-(2\alpha-\beta)(2\sigma \alpha +\beta)}{2}\norm[\partial_x \psi]_{L^2}^2
+\frac{\{(2\sigma+2)\alpha+\beta\}\beta c}{2(2\sigma+2)}\norm[\psi]_{L^{2\sigma+2}}^{2\sigma+2}
+ (2\sigma+2)\alpha \til{K}(\psi)
\\
&<0,
\end{align*}
where in the last inequality we use 
\[ 2\alpha-\beta>0, 2 \alpha +\beta>0, \beta<0, \text{ and } \til{K}(\psi)=0. \]
Therefore, $\eta=0$. This implies $\til{S}_{\omega,c}'(\psi)=0$ and then $\psi \in \til{\scG}_{\omega,c}$.
\end{proof}

\begin{lemma}
We have
$\til{\scG}_{\omega,c} \subset \til{\scM}_{\omega,c}^{\alpha,\beta}$ if $\til{\scM}_{\omega,c}^{\alpha,\beta} \neq \emptyset$. 
\end{lemma}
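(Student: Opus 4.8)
The plan is to leverage the two lemmas just proved. \lemref[lem2.6.1] exhibits $\til{\scG}_{\omega,c}$ as the single orbit $\{e^{i\theta}e^{-\frac{c}{2}ix}\phi_{\omega,c}(\cdot-y):\theta\in[0,2\pi),\ y\in\R\}$ generated from the one profile $e^{-\frac{c}{2}ix}\phi_{\omega,c}$ by phase rotation and spatial translation, and \lemref[lem2.7.1] gives $\til{\scM}_{\omega,c}^{\alpha,\beta}\subset\til{\scG}_{\omega,c}$; what remains is the reverse inclusion under the hypothesis $\til{\scM}_{\omega,c}^{\alpha,\beta}\neq\emptyset$. So I would fix an arbitrary $\psi\in\til{\scG}_{\omega,c}$ and verify the conditions defining membership in $\til{\scM}_{\omega,c}^{\alpha,\beta}$: that $\psi$ is a nonzero element of $X_{\omega,c}$, that $\til{K}_{\omega,c}^{\alpha,\beta}(\psi)=0$, and that $\til{S}_{\omega,c}(\psi)=\mu_{\omega,c}^{\alpha,\beta}$. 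The first is built into the definition of $\til{\scG}_{\omega,c}$.

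For the constraint I would use the Pohozaev-type identity already recorded in the explicit formula for $\til{K}_{\omega,c}^{\alpha,\beta}$, namely $\til{K}_{\omega,c}^{\alpha,\beta}(\psi)=\langle\til{S}_{\omega,c}'(\psi),\,\alpha\psi-\beta x\partial_x\psi\rangle$. Since $\psi\in\til{\scG}_{\omega,c}$ means exactly $\til{S}_{\omega,c}'(\psi)=0$, this pairing vanishes and hence $\til{K}_{\omega,c}^{\alpha,\beta}(\psi)=0$ with no computation. In particular $\psi$ is admissible for the minimization defining $\mu_{\omega,c}^{\alpha,\beta}$, so automatically $\til{S}_{\omega,c}(\psi)\geq\mu_{\omega,c}^{\alpha,\beta}$.

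For the matching value I would exploit the symmetries of the action. Each term of $\til{S}_{\omega,c}$ --- the $\norm[\partial_x\psi]_{L^2}^2$ term, the $\norm[\psi]_{L^{2\sigma+2}}^{2\sigma+2}$ term, and $N(\psi)$ (the $L^2$-term being absent since $\omega=c^2/4$) --- is invariant under both $\psi\mapsto\psi(\cdot-y)$ and $\psi\mapsto e^{i\theta}\psi$; for $N$ this is the cancellation of the phase in $\overline{\psi}\partial_x\psi$. Because the orbit of \lemref[lem2.6.1] is produced from one profile by precisely these two symmetries, $\til{S}_{\omega,c}$ is constant on $\til{\scG}_{\omega,c}$. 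Now the hypothesis $\til{\scM}_{\omega,c}^{\alpha,\beta}\neq\emptyset$ lets me pick $\psi_*\in\til{\scM}_{\omega,c}^{\alpha,\beta}$, which by \lemref[lem2.7.1] lies in $\til{\scG}_{\omega,c}$ and hence in the same orbit as $\psi$; constancy then gives $\til{S}_{\omega,c}(\psi)=\til{S}_{\omega,c}(\psi_*)=\mu_{\omega,c}^{\alpha,\beta}$. Combined with $\til{K}_{\omega,c}^{\alpha,\beta}(\psi)=0$ and $\psi\neq0$, this places $\psi$ in $\til{\scM}_{\omega,c}^{\alpha,\beta}$ and closes the inclusion.

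I do not expect a genuine obstacle here: all the analytic work was already absorbed into \lemref[lem2.6.1] (uniqueness of the elliptic profile, via Appendix~A) and \lemref[lem2.7.1] (vanishing of the Lagrange multiplier). The only point requiring a moment's care is the verification that $\til{S}_{\omega,c}$ is literally constant along the whole orbit, since it is exactly this fact that allows a single assumed minimizer to upgrade every critical point to a minimizer.
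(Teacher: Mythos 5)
Your proof is correct and follows essentially the same route as the paper: the paper likewise gets $\til{K}_{\omega,c}^{\alpha,\beta}(\psi)=\tbra[\til{S}_{\omega,c}'(\psi),\partial_\lambda \psi_{\lambda}^{\alpha,\beta}|_{\lambda=0}]=0$ directly from $\til{S}_{\omega,c}'(\psi)=0$, and uses Lemmas \ref{lem2.6.1} and \ref{lem2.7.1} to place the assumed minimizer and the given critical point on the same phase-translation orbit, on which $\til{S}_{\omega,c}$ is constant.
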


\begin{proof}
 Let $\psi \in \til{\scG}$. Then, there exist $\theta_0 \in [0,2\pi)$ and $x_0 \in \R$ such that $\psi=e^{i\theta_0}e^{-\frac{c}{2}ix} \phi_{\omega,c}(\cdot-x_0)$ by \lemref[lem2.6.1]. If $\til{\scM} \neq \emptyset$, then we can take $\varphi \in \til{\scM}$. By Lemmas \ref{lem2.6.1} and \ref{lem2.7.1}, there exist $\theta_1 \in [0,2\pi)$ and $x_1\in \R$ such that $\varphi=e^{i\theta_1}e^{-\frac{c}{2}ix} \phi_{\omega,c}(\cdot-x_1)$. Thus, 
$\til{S}_{\omega,c}(\psi)=\til{S}_{\omega,c}(\phi_{\omega,c})=\til{S}_{\omega,c}(\varphi)=\mu_{\omega,c}$.
Moreover, we have $\til{K}(\psi)=\tbra[\til{S}_{\omega,c}'(\psi),\partial_\lambda \psi_{\lambda}^{\alpha,\beta}|_{\lambda=0}]=0$.
\end{proof}

\begin{lemma}
We have
$\til{\scM}_{\omega,c}^{\alpha,\beta} \neq \emptyset$. 
\end{lemma}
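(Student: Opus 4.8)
The plan is to run the concentration--compactness scheme of Colin and Ohta, adapted to the scaling functional $\til K_{\omega,c}^{\alpha,\beta}$ on $X_{\omega,c}=\dot H^1(\R)\cap L^{2\sigma+2}(\R)$ (recall we are in the massless case $\omega=c^2/4$, $c>0$). First I would take a minimizing sequence $\{\psi_n\}\subset X_{\omega,c}\setminus\{0\}$ with $\til K_{\omega,c}^{\alpha,\beta}(\psi_n)=0$ and $\til S_{\omega,c}(\psi_n)\to\mu_{\omega,c}^{\alpha,\beta}$. Since $\omega=c^2/4$, the identity \eqref[eq2.2] reduces on the constraint to $\alpha(2\sigma+2)\til S_{\omega,c}(\psi_n)=\til J_{\omega,c}^{\alpha,\beta}(\psi_n)$, and the signs \eqref[eq2.3] with $\sigma\geq1$ give $\alpha>0$ (from $2\alpha+\beta>0$, $\beta<0$), $2\sigma\alpha+\beta>0$, and $-\beta c>0$; thus $\til J_{\omega,c}^{\alpha,\beta}$ is a sum of two positive terms controlling $\norm[\partial_x\psi]_{L^2}^2$ and $\norm[\psi]_{L^{2\sigma+2}}^{2\sigma+2}$. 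This coercivity shows that $\{\psi_n\}$ is bounded in $X_{\omega,c}$ and that $\mu_{\omega,c}^{\alpha,\beta}\geq0$.

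Next I would show the infimum is strictly positive and the sequence does not vanish. Bounding the nonlinear term by H\"older as $|N(\psi)|\leq\norm[\psi]_{L^{4\sigma+2}}^{2\sigma+1}\norm[\partial_x\psi]_{L^2}$ and using the Gagliardo--Nirenberg inequality \eqref[eq2.1], the constraint $\til J_{\omega,c}^{\alpha,\beta}(\psi)=\alpha N(\psi)$ yields a superlinear bound $\til J_{\omega,c}^{\alpha,\beta}(\psi)\cleq\til J_{\omega,c}^{\alpha,\beta}(\psi)^{\kappa}$ with $\kappa>1$, so $\til J_{\omega,c}^{\alpha,\beta}(\psi_n)\geq\delta>0$ uniformly and $\mu_{\omega,c}^{\alpha,\beta}>0$. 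In particular $\liminf_n N(\psi_n)>0$, and the same H\"older bound forces $\limsup_n\norm[\psi_n]_{L^{4\sigma+2}}>0$. Taking $q=4\sigma+2\in(2\sigma+2,\infty)$, I can then apply the Lieb compactness lemma (\lemref[lem2.3]) with $p=2\sigma+2$ to produce translations $\{y_n\}$ and a nonzero $\psi\in X_{\omega,c}$ with $\psi_n(\cdot-y_n)\rightharpoonup\psi$ in $X_{\omega,c}$ along a subsequence; since $\til S_{\omega,c}$, $\til K_{\omega,c}^{\alpha,\beta}$ and $\til J_{\omega,c}^{\alpha,\beta}$ are translation invariant, I may replace $\psi_n$ by its translates.

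To see that $\psi$ is a minimizer, I would first record two scaling facts: $\lambda\mapsto\til J_{\omega,c}^{\alpha,\beta}(\psi_\lambda^{\alpha,\beta})$ is strictly increasing (both exponents $2\alpha-\beta$ and $(2\sigma+2)\alpha+\beta$ are positive), and every $\phi\neq0$ with $\til K_{\omega,c}^{\alpha,\beta}(\phi)\leq0$ can be rescaled by some $\lambda_0\leq0$ onto the constraint, whence $\til J_{\omega,c}^{\alpha,\beta}(\phi)\geq\til J_{\omega,c}^{\alpha,\beta}(\phi_{\lambda_0}^{\alpha,\beta})\geq\alpha(2\sigma+2)\mu_{\omega,c}^{\alpha,\beta}$. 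Splitting $\norm[\partial_x\psi_n]_{L^2}^2$ and $\norm[\psi_n]_{L^{2\sigma+2}}^{2\sigma+2}$ by the Brezis--Lieb lemma (\lemref[lem2.4]), together with the analogous identity $N(\psi_n)=N(\psi)+N(\psi_n-\psi)+o(1)$, gives $\til K_{\omega,c}^{\alpha,\beta}(\psi_n)=\til K_{\omega,c}^{\alpha,\beta}(\psi)+\til K_{\omega,c}^{\alpha,\beta}(\psi_n-\psi)+o(1)$ and likewise for $\til J_{\omega,c}^{\alpha,\beta}$. If $\til K_{\omega,c}^{\alpha,\beta}(\psi)>0$ then the remainder has $\til K_{\omega,c}^{\alpha,\beta}(\psi_n-\psi)<0$, so the fibering fact forces $\til J_{\omega,c}^{\alpha,\beta}(\psi_n-\psi)\geq\alpha(2\sigma+2)\mu_{\omega,c}^{\alpha,\beta}$, and then $\til J_{\omega,c}^{\alpha,\beta}(\psi_n)\geq\til J_{\omega,c}^{\alpha,\beta}(\psi)+\alpha(2\sigma+2)\mu_{\omega,c}^{\alpha,\beta}+o(1)$ contradicts $\til J_{\omega,c}^{\alpha,\beta}(\psi_n)\to\alpha(2\sigma+2)\mu_{\omega,c}^{\alpha,\beta}$ because $\til J_{\omega,c}^{\alpha,\beta}(\psi)>0$. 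Hence $\til K_{\omega,c}^{\alpha,\beta}(\psi)\leq0$; rescaling $\psi$ onto the constraint by some $\lambda_0\leq0$ and invoking weak lower semicontinuity of $\til J_{\omega,c}^{\alpha,\beta}$ to get $\til J_{\omega,c}^{\alpha,\beta}(\psi)\leq\alpha(2\sigma+2)\mu_{\omega,c}^{\alpha,\beta}$ squeezes $\lambda_0=0$, so $\til K_{\omega,c}^{\alpha,\beta}(\psi)=0$ and $\til S_{\omega,c}(\psi)=\mu_{\omega,c}^{\alpha,\beta}$, i.e. $\psi\in\til\scM_{\omega,c}^{\alpha,\beta}$.

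The main obstacle is the nonlinear term $N$, which is not weakly continuous: the crux is the Brezis--Lieb--type splitting $N(\psi_n)=N(\psi)+N(\psi_n-\psi)+o(1)$. Unlike the pure $L^p$ splittings of \lemref[lem2.4], $N$ pairs $|\psi_n|^{2\sigma}\psi_n$, which converges only in $L^q_{\mathrm{loc}}$ and almost everywhere via Rellich, against $\partial_x\psi_n$, which converges merely weakly in $L^2$. Establishing this identity, presumably through an integration-by-parts reformulation of $N$ combined with the almost-everywhere convergence of $\psi_n$, is the technically delicate point on which the whole compactness argument rests.
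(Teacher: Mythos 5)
Your plan follows essentially the same route as the paper's proof (Proposition~\ref{prop2.10}): minimizing sequence, positivity of $\mu_{\omega,c}^{\alpha,\beta}$, non-vanishing in $L^{4\sigma+2}$, the Lieb compactness lemma with $p=2\sigma+2$, a Brezis--Lieb splitting of $\til{K}_{\omega,c}^{\alpha,\beta}$ and $\til{J}_{\omega,c}^{\alpha,\beta}$, a fibering argument handling $\til{K}\le 0$, and weak lower semicontinuity of $\til{J}$. Two sub-steps deviate harmlessly: you get $\mu>0$ from a superlinear self-bound $\til{J}\cleq \til{J}^{\kappa}$, $\kappa>1$, on the constraint set (the paper instead combines the $L^\infty$ smallness from \eqref[eq2.1] with a completing-the-square identity for $N$), and you rescale onto the constraint with the $(\alpha,\beta)$-scaling where the paper simply uses scalar multiplication $\lambda\psi$, $\lambda\in(0,1)$. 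A minor slip: on the constraint $\til{J}$ is \emph{not} equal to $\alpha N$; it is only two-sidedly comparable to it, both being positive combinations of $\norm[\partial_x\psi]_{L^2}^2$ and $\norm[\psi]_{L^{2\sigma+2}}^{2\sigma+2}$. This does not affect your argument.

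The genuine gap is precisely the point you flag and then leave open: the splitting $N(\psi_n)=N(\psi)+N(\psi_n-\psi)+o(1)$. Every subsequent step (the contradiction when $\til{K}(\psi)>0$, hence the conclusion $\til{K}(\psi)\le 0$, hence membership in $\til{\scM}_{\omega,c}^{\alpha,\beta}$) rests on it, and ``presumably through an integration-by-parts reformulation'' is not a proof: as you yourself note, $N$ pairs $|\psi_n|^{2\sigma}\psi_n$ against $\partial_x\psi_n$, and a product of an a.e.-convergent factor with a merely weakly convergent one does not split in general. The paper closes this with the pointwise identity
\begin{equation*}
-N(\psi)=-\norm[\partial_x \psi]_{L^2}^2-\frac14\norm[\psi]_{L^{4\sigma+2}}^{4\sigma+2}+\norm[\partial_x\psi+\tfrac{i}{2}|\psi|^{2\sigma}\psi]_{L^2}^2,
\end{equation*}
which converts $N$ (and hence $\til{K}$, via \eqref[eq2.6]) into a combination of pure norms. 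The first two terms split by \lemref[lem2.4]; for the third one checks that $w_n:=\partial_x v_n+\frac{i}{2}|v_n|^{2\sigma}v_n$ converges weakly in $L^2$ to $w:=\partial_x v+\frac{i}{2}|v|^{2\sigma}v$, by testing against $C_0^\infty$ and using local compactness of the embedding of $\dot{H}^1\cap L^{2\sigma+2}$ into local Lebesgue spaces, and then applies the weak $L^2$ version of \lemref[lem2.4]. If you supply this identity (or an equivalent device), your argument goes through; without it, the compactness step --- and with it the nonemptiness of $\til{\scM}_{\omega,c}^{\alpha,\beta}$ --- is not established.
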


To prove this lemma, we show the following proposition. 

\begin{proposition}
\label{prop2.10}
Let $\{\psi_n\}_{n\in\N} \subset X_{\omega,c}$ satisfy 
\[ \til{S}_{\omega,c}(\psi_n) \to \mu_{\omega,c}^{\alpha,\beta} \text{ and } \til{K}_{\omega,c}^{\alpha,\beta}(\psi_n) \to 0. \]
Then, there exists $\{y_n\} \subset \R$ and $\psi \in \til{\scM}_{\omega,c}^{\alpha,\beta}$ such that $\{\psi_n(\cdot - y_n)\}$ has a subsequence which converges to $\psi$ strongly in $X_{\omega ,c}$. 
\end{proposition}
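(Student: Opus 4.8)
The plan is to run a concentration--compactness argument adapted to the massless space $X_{\omega,c}=\dot H^1(\R)\cap L^{2\sigma+2}(\R)$, replacing $\til{S}_{\omega,c}$ on the constraint by the coercive functional $\til{J}_{\omega,c}^{\alpha,\beta}$ via \eqref[eq2.2]. The starting observation is that on $\{\til{K}_{\omega,c}^{\alpha,\beta}=0\}$ one has $\alpha(2\sigma+2)\til{S}_{\omega,c}=\til{J}_{\omega,c}^{\alpha,\beta}$, and that in the massless case $\til{J}_{\omega,c}^{\alpha,\beta}(\psi)=\frac{2\sigma\alpha+\beta}{2}\norm[\partial_x\psi]_{L^2}^2-\frac{\beta c}{2(2\sigma+2)}\norm[\psi]_{L^{2\sigma+2}}^{2\sigma+2}$ is a manifestly nonnegative combination, both coefficients being positive by \eqref[eq2.3] and $\sigma\ge1$ (indeed $2\alpha+\beta>0$ forces $\alpha>0$, so $2\sigma\alpha+\beta\ge 2\alpha+\beta>0$). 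First I would record two preliminary facts. Boundedness of any minimizing sequence in $X_{\omega,c}$ is immediate, since $\til{J}_{\omega,c}^{\alpha,\beta}(\psi_n)=\alpha(2\sigma+2)\til{S}_{\omega,c}(\psi_n)-\til{K}_{\omega,c}^{\alpha,\beta}(\psi_n)\to\alpha(2\sigma+2)\mu_{\omega,c}^{\alpha,\beta}$ controls both $\norm[\partial_x\psi_n]_{L^2}$ and $\norm[\psi_n]_{L^{2\sigma+2}}$. Positivity $\mu_{\omega,c}^{\alpha,\beta}>0$ follows by estimating $\abs[N(\psi)]\le\norm[\psi]_{L^{4\sigma+2}}^{2\sigma+1}\norm[\partial_x\psi]_{L^2}$ and using the Gagliardo--Nirenberg inequality \eqref[eq2.1]: on $\{\til{K}=0\}$ the identity $\alpha N(\psi)=\frac{2\alpha-\beta}{2}\norm[\partial_x\psi]_{L^2}^2+\frac{\{(2\sigma+2)\alpha+\beta\}c}{2(2\sigma+2)}\norm[\psi]_{L^{2\sigma+2}}^{2\sigma+2}$ then forces $\til{J}_{\omega,c}^{\alpha,\beta}(\psi)$ to stay above a fixed positive constant.

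Next I would extract a profile. To rule out vanishing, suppose $\norm[\psi_n]_{L^{4\sigma+2}}\to0$ along a subsequence; since $\abs[N(\psi_n)]\le\norm[\psi_n]_{L^{4\sigma+2}}^{2\sigma+1}\norm[\partial_x\psi_n]_{L^2}$ and the gradients are bounded, $N(\psi_n)\to0$, so $\til{K}_{\omega,c}^{\alpha,\beta}(\psi_n)\to0$ would force $\til{J}_{\omega,c}^{\alpha,\beta}(\psi_n)\to0$, contradicting $\mu_{\omega,c}^{\alpha,\beta}>0$. Hence $\limsup_n\norm[\psi_n]_{L^{4\sigma+2}}>0$ with $4\sigma+2\in(2\sigma+2,\infty)$, and \lemref[lem2.3] supplies translations $\{y_n\}$ and a nonzero weak limit $\psi$ of $w_n:=\psi_n(\cdot-y_n)$ in $X_{\omega,c}$, with $w_n\to\psi$ a.e.; note $w_n$ is again a minimizing sequence by translation invariance of $\til{S}$ and $\til{K}$.

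The analytic core is passing to the limit in the functionals. Using the Brezis--Lieb lemma \lemref[lem2.4] for $\norm[\partial_x\cdot]_{L^2}^2$ and $\norm[\cdot]_{L^{2\sigma+2}}^{2\sigma+2}$, together with a Brezis--Lieb type splitting $N(w_n)=N(\psi)+N(w_n-\psi)+o(1)$ for the derivative nonlinearity, I obtain $\til{J}_{\omega,c}^{\alpha,\beta}(w_n)=\til{J}_{\omega,c}^{\alpha,\beta}(\psi)+\til{J}_{\omega,c}^{\alpha,\beta}(w_n-\psi)+o(1)$ and the analogous identity for $\til{K}_{\omega,c}^{\alpha,\beta}$. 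Writing $v_n:=w_n-\psi\wto0$ and $d:=\alpha(2\sigma+2)\mu_{\omega,c}^{\alpha,\beta}$, weak lower semicontinuity gives $\til{J}_{\omega,c}^{\alpha,\beta}(\psi)\le d$, hence $\til{J}_{\omega,c}^{\alpha,\beta}(v_n)\to d-\til{J}_{\omega,c}^{\alpha,\beta}(\psi)$. I would then prove $\til{K}_{\omega,c}^{\alpha,\beta}(\psi)\le0$ by contradiction. The key reformulation is $d=\inf\{\til{J}_{\omega,c}^{\alpha,\beta}(\varphi):\varphi\neq0,\ \til{K}_{\omega,c}^{\alpha,\beta}(\varphi)\le0\}$, which holds because any $\varphi$ with $\til{K}_{\omega,c}^{\alpha,\beta}(\varphi)<0$ can be scaled by $\varphi_\lambda^{\alpha,\beta}$ with $\lambda<0$ onto $\{\til{K}=0\}$ while strictly decreasing the (strictly increasing in $\lambda$) functional $\til{J}_{\omega,c}^{\alpha,\beta}$. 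Now if $\til{K}_{\omega,c}^{\alpha,\beta}(\psi)>0$, then $\til{K}_{\omega,c}^{\alpha,\beta}(v_n)\to-\til{K}_{\omega,c}^{\alpha,\beta}(\psi)<0$, so $v_n$ is admissible and $\til{J}_{\omega,c}^{\alpha,\beta}(v_n)\ge d$ in the limit, contradicting $\til{J}_{\omega,c}^{\alpha,\beta}(v_n)\to d-\til{J}_{\omega,c}^{\alpha,\beta}(\psi)<d$ (as $\til{J}_{\omega,c}^{\alpha,\beta}(\psi)>0$). Therefore $\til{K}_{\omega,c}^{\alpha,\beta}(\psi)\le0$, whence the reformulation gives $\til{J}_{\omega,c}^{\alpha,\beta}(\psi)\ge d$, so $\til{J}_{\omega,c}^{\alpha,\beta}(\psi)=d$; a final scaling argument rules out $\til{K}_{\omega,c}^{\alpha,\beta}(\psi)<0$, giving $\til{K}_{\omega,c}^{\alpha,\beta}(\psi)=0$ and $\til{S}_{\omega,c}(\psi)=\mu_{\omega,c}^{\alpha,\beta}$, i.e. $\psi\in\til{\scM}_{\omega,c}^{\alpha,\beta}$.

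Strong convergence then follows from $\til{J}_{\omega,c}^{\alpha,\beta}(w_n)\to\til{J}_{\omega,c}^{\alpha,\beta}(\psi)$: since $\til{J}_{\omega,c}^{\alpha,\beta}$ is a strictly positively weighted sum of the two weakly lower semicontinuous quantities $\norm[\partial_x\cdot]_{L^2}^2$ and $\norm[\cdot]_{L^{2\sigma+2}}^{2\sigma+2}$, convergence of the sum to the sum of the limits forces $\norm[\partial_x w_n]_{L^2}\to\norm[\partial_x\psi]_{L^2}$ and $\norm[w_n]_{L^{2\sigma+2}}\to\norm[\psi]_{L^{2\sigma+2}}$ separately; combined with weak convergence and uniform convexity this yields $w_n\to\psi$ strongly in $X_{\omega,c}$. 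The step I expect to be the main obstacle is the Brezis--Lieb decomposition of the derivative nonlinearity $N$: unlike the pure $L^p$ norms it is not covered by \lemref[lem2.4], and because $N$ involves $\partial_x w_n$, which converges only weakly in $L^2$, the vanishing of the cross terms must be extracted from the a.e.\ convergence and the uniform $\dot H^1\cap L^{2\sigma+2}$ bounds via a Vitali / generalized dominated convergence argument; everything else is the standard scaling-reformulation machinery of Colin--Ohta.
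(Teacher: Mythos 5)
Your overall architecture is exactly the paper's: coercivity of $\til{J}_{\omega,c}^{\alpha,\beta}$ via \eqref{eq2.2}, positivity of $\mu_{\omega,c}^{\alpha,\beta}$, non-vanishing in $L^{4\sigma+2}$, the Lieb compactness lemma, a Brezis--Lieb splitting, the reformulation of $\mu$ as an infimum of $\til{J}$ over $\{\til{K}\le 0\}$, and weak lower semicontinuity to close. The minor variations (scaling by $\varphi_\lambda^{\alpha,\beta}$ with $\lambda<0$ rather than by $\lambda\varphi$ with $\lambda\in(0,1)$ to project onto $\{\til{K}=0\}$; a slightly cleaner non-vanishing step) are all sound.

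The one genuine gap is precisely the step you flag as the main obstacle: the splitting $N(w_n)=N(\psi)+N(w_n-\psi)+o(1)$. As you note, this is not covered by \lemref{lem2.3} or \lemref{lem2.4}, and "Vitali / generalized dominated convergence" is not enough as stated: the troublesome term pairs $\partial_x(w_n-\psi)$, which converges only \emph{weakly} in $L^2$, against $|w_n|^{2\sigma}\overline{w_n}-|w_n-\psi|^{2\sigma}\overline{(w_n-\psi)}$, and to kill that pairing you need this difference to converge \emph{strongly} in $L^2$ to $|\psi|^{2\sigma}\overline{\psi}$ --- a refined Brezis--Lieb statement for the nonlinearity $z\mapsto|z|^{2\sigma}z$ that you would have to prove separately. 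The paper sidesteps this entirely by completing the square: using
\begin{equation*}
-N(\psi)=-\norm[\partial_x \psi]_{L^2}^2-\frac{1}{4}\norm[\psi]_{L^{4\sigma+2}}^{4\sigma+2}+\norm[\partial_x\psi+\tfrac{i}{2}|\psi|^{2\sigma}\psi]_{L^2}^2,
\end{equation*}
it rewrites $\til{K}$ as a combination of $\norm[\partial_x\cdot]_{L^2}^2$, $\norm[\cdot]_{L^{2\sigma+2}}^{2\sigma+2}$, $\norm[\cdot]_{L^{4\sigma+2}}^{4\sigma+2}$ and $\norm[\partial_x\cdot+\tfrac{i}{2}|\cdot|^{2\sigma}\cdot]_{L^2}^2$, each of which splits by \lemref{lem2.4} (the last one because $\partial_x v_n+\tfrac{i}{2}|v_n|^{2\sigma}v_n\wto\partial_x v+\tfrac{i}{2}|v|^{2\sigma}v$ weakly in $L^2$, verified by testing against $C_0^\infty$ functions and local compactness). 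Note also that $\til{J}$ contains no $N$ at all, so its splitting is immediate; only $\til{K}$ needs the trick. If you adopt this identity your argument goes through verbatim; otherwise you owe a proof of the refined Brezis--Lieb lemma for $|z|^{2\sigma}z$.
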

To prove this proposition, at first, we prove the following lemma. 
\begin{lemma}
We have $\mu_{\omega,c}^{\alpha,\beta}>0$.
\end{lemma}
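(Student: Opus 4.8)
The plan is to show strict positivity of the minimization value $\mu_{\omega,c}^{\alpha,\beta}$ in the massless case $\omega=c^2/4$, $c>0$ by combining the reformulation \eqref[eq2.2] with the constraint $\til{K}_{\omega,c}^{\alpha,\beta}(\psi)=0$. First I would use the identity relating $\til{S}$, $\til{K}$, and $\til{J}$: for any admissible $\psi$ with $\til{K}_{\omega,c}^{\alpha,\beta}(\psi)=0$ we have
\[
\alpha(2\sigma+2)\til{S}_{\omega,c}(\psi)=\til{J}_{\omega,c}^{\alpha,\beta}(\psi)
=\frac{2\sigma\alpha+\beta}{2}\norm[\partial_x \psi]_{L^2}^2-\frac{\beta c}{2(2\sigma+2)}\norm[\psi]_{L^{2\sigma+2}}^{2\sigma+2},
\]
where the $L^2$-term drops out because $\omega-c^2/4=0$. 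Under \eqref[eq2.3] we have $\beta<0$ and (since $\beta<0<2\alpha$ forces $\alpha>0$) $2\sigma\alpha+\beta>0$ when $\sigma\geq 1$, so both coefficients $\tfrac{2\sigma\alpha+\beta}{2}$ and $-\tfrac{\beta c}{2(2\sigma+2)}$ are strictly positive. Hence $\til{J}$ is a strictly positive combination of $\norm[\partial_x\psi]_{L^2}^2$ and $\norm[\psi]_{L^{2\sigma+2}}^{2\sigma+2}$, and since $\alpha(2\sigma+2)>0$, it suffices to bound $\til{J}_{\omega,c}^{\alpha,\beta}(\psi)$ away from zero on the constraint set.

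The key step is to extract from the constraint $\til{K}_{\omega,c}^{\alpha,\beta}(\psi)=0$ a lower bound on $\til{J}(\psi)$ in terms of the norms themselves, ruling out that a sequence of admissible functions degenerates to zero. Writing out $\til{K}_{\omega,c}^{\alpha,\beta}(\psi)=0$ and estimating the nonlinear term $N(\psi)$, I would control $|N(\psi)|$ by the Gagliardo--Nirenberg inequality \eqref[eq2.1] together with Hölder's inequality. Indeed $N(\psi)=\re\int i|\psi|^{2\sigma}\overline{\psi}\partial_x\psi\,dx$ obeys $|N(\psi)|\leq\norm[\psi]_{L^\infty}^{2\sigma}\norm[\psi]_{L^2}\norm[\partial_x\psi]_{L^2}$ in the $H^1$-case; in the massless case, where $L^2$ is unavailable, I would instead interpolate to land on $\norm[\psi]_{L^{2\sigma+2}}$ and $\norm[\partial_x\psi]_{L^2}$, obtaining an estimate of the form $|N(\psi)|\lesssim\norm[\partial_x\psi]_{L^2}^{a}\norm[\psi]_{L^{2\sigma+2}}^{b}$ with $a+b$ strictly greater than the homogeneity of the quadratic/$(2\sigma+2)$ terms appearing in $\til{K}$. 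The scaling invariance of the problem is precisely what makes this work: the exponents conspire so that $N$ is superlinear relative to the good terms.

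From here I would argue by contradiction. Suppose $\mu_{\omega,c}^{\alpha,\beta}=0$. Take a minimizing sequence $\{\psi_n\}$ with $\til{K}(\psi_n)=0$ and $\til{S}(\psi_n)\to 0$; by the displayed identity $\til{J}(\psi_n)\to 0$, so both $\norm[\partial_x\psi_n]_{L^2}\to 0$ and $\norm[\psi_n]_{L^{2\sigma+2}}\to 0$. On the other hand, expanding $\til{K}(\psi_n)=0$ and isolating $\alpha N(\psi_n)$ against the strictly positive quadratic and $L^{2\sigma+2}$ terms, the superlinear bound on $N$ from \eqref[eq2.1] yields an inequality of the shape $c_1\norm[\partial_x\psi_n]_{L^2}^2+c_2\norm[\psi_n]_{L^{2\sigma+2}}^{2\sigma+2}\leq C\,(\norm[\partial_x\psi_n]_{L^2}^2+\norm[\psi_n]_{L^{2\sigma+2}}^{2\sigma+2})^{1+\delta}$ for some $\delta>0$, with $c_1,c_2>0$. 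This forces the norms to be bounded below by a positive constant whenever $\psi_n\neq 0$, contradicting their convergence to zero. Therefore $\mu_{\omega,c}^{\alpha,\beta}>0$.

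The main obstacle I anticipate is the nonlinear estimate on $N(\psi)$ in the massless space $\dot{H}^1\cap L^{2\sigma+2}$, where no $L^2$-control is available; one must interpolate $\norm[\psi]_{L^\infty}$ purely between $\norm[\partial_x\psi]_{L^2}$ and $\norm[\psi]_{L^{2\sigma+2}}$ via \eqref[eq2.1] (with $2p=2\sigma+2$, i.e.\ $p=\sigma+1$) and verify that the resulting total exponent exceeds the homogeneity of the leading terms, so that the superlinearity $\delta>0$ is genuinely positive for every $\sigma\geq 1$. Once the exponent bookkeeping is settled, the contradiction argument is routine; the positivity of the coefficients $2\sigma\alpha+\beta$ and $-\beta c$ under \eqref[eq2.3] is immediate and requires only that $\alpha>0$, which follows from $2\alpha+\beta>0$ and $\beta<0$.
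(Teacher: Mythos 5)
Your argument is correct, and it shares the paper's overall skeleton: assume $\mu_{\omega,c}^{\alpha,\beta}=0$, take a minimizing sequence, use \eqref[eq2.2] together with \eqref[eq2.3] to conclude $\norm[\partial_x \psi_n]_{L^2}\to 0$ and $\norm[\psi_n]_{L^{2\sigma+2}}\to 0$, and then contradict $\til{K}_{\omega,c}^{\alpha,\beta}(\psi_n)=0$. The two proofs diverge only in how the nonlinear term $N(\psi_n)$ is neutralized. The paper uses the exact algebraic identity
\begin{equation*}
-N(\psi)=-\norm[\partial_x \psi]_{L^2}^2-\frac{1}{4}\norm[\psi]_{L^{4\sigma+2}}^{4\sigma+2}+\norm[\partial_x \psi+\tfrac{1}{2}i|\psi|^{2\sigma}\psi]_{L^2}^2,
\end{equation*}
which rewrites $\til{K}$ as a sum of manifestly nonnegative terms plus $-\frac{\alpha}{4}\norm[\psi_n]_{L^{4\sigma+2}}^{4\sigma+2}$; the latter is absorbed into the positive $L^{2\sigma+2}$ term once $\norm[\psi_n]_{L^\infty}\to 0$, which follows from \eqref[eq2.1] with $p=(\sigma+2)/2$. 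You instead estimate $|N(\psi)|$ in absolute value: writing $A:=\norm[\partial_x\psi]_{L^2}^2$ and $B:=\norm[\psi]_{L^{2\sigma+2}}^{2\sigma+2}$, H\"older and \eqref[eq2.1] give $|N(\psi)|\leq\norm[\psi]_{L^{4\sigma+2}}^{2\sigma+1}\norm[\partial_x\psi]_{L^2}\leq C(AB)^{\frac{\sigma+1}{\sigma+2}}\leq C(A+B)^{1+\delta}$ with $\delta=\frac{\sigma}{\sigma+2}>0$; since $\til{K}(\psi_n)=0$ forces $c_1A_n+c_2B_n\leq\alpha|N(\psi_n)|$ with $c_1=\frac{2\alpha-\beta}{2}>0$ and $c_2=\frac{\{(2\sigma+2)\alpha+\beta\}c}{2(2\sigma+2)}>0$, this bounds $A_n+B_n$ from below by a positive constant, which is the desired contradiction. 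Your exponent bookkeeping does check out for every $\sigma>0$, and your verification of the sign conditions ($\alpha>0$, $2\sigma\alpha+\beta>0$, $-\beta c>0$) matches the paper's. What each approach buys: the paper's identity exploits the sign structure of $N$ and needs only the qualitative fact $\norm[\psi_n]_{L^\infty}\to 0$, while your direct superlinear bound is more robust and would survive in situations where no exact completion-of-square identity is available.
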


\begin{proof}
We recall that $\mu_{\omega,c}^{\alpha,\beta}=\inf\{ \til{S}_{\omega,c}(\psi):\psi \in X_{\omega,c}\setminus\{0\}, \til{K}_{\omega,c}^{\alpha,\beta}(\psi)=0 \}$. 
By \eqref[eq2.2], it is trivial that $\mu \geq 0$. We prove $\mu>0$ by contradiction. We assume that $\mu=0$. Taking the minimizing sequence $\{\psi_n\}\subset X_{\omega,c}$, i.e. $\til{S}(\psi_n) \to \mu=0$ and $\til{K}(\psi_n) =0$, we have $\norm[ \partial_x \psi_n]_{L^2}^2 \to 0$ and $\norm[\psi_n]_{L^{2\sigma+2}}^{2\sigma+2} \to 0$ by \eqref[eq2.2] and \eqref[eq2.3]. 
Then, by using \eqref[eq2.1] as $p=(\sigma+2)/2$, we get $\norm[\psi_n]_{L^\infty}\to0$ as $n \to \infty$. 
By using 
\begin{align*}
-N(\psi)=-\norm[\partial_x \psi]_{L^2}^2 - \frac{1}{4} \norm[\psi]_{L^{4\sigma+2}}^{4\sigma+2} +\norm[\partial_x \psi + \frac{1}{2}i|\psi|^{2\sigma} \psi]_{L^2}^2,
\end{align*}
we obtain
\begin{align*}
\til{K}(\psi_n)
&=\frac{2\alpha-\beta}{2}\norm[\partial_x \psi_n]_{L^2}^2 
+\frac{\{(2\sigma+2)\alpha+\beta\} c}{2(2\sigma+2)} \norm[\psi_n]_{L^{2\sigma+2}}^{2\sigma+2} -\alpha N(\psi_n)
\\
&=-\frac{\beta}{2}\norm[\partial_x \psi_n]_{L^2}^2 
+\frac{\{(2\sigma+2)\alpha+\beta\} c}{2(2\sigma+2)} \norm[\psi_n]_{L^{2\sigma+2}}^{2\sigma+2} -\frac{\alpha}{4}\norm[\psi_n]_{L^{4\sigma+2}}^{4\sigma+2} 
\\
& \quad +\alpha \norm[\partial_x \psi_n + \frac{1}{2}i|\psi_n|^{2\sigma} \psi_n]_{L^2}^2
\\
&\geq  \frac{\{(2\sigma+2)\alpha+\beta\} c}{2(2\sigma+2)} \norm[\psi_n]_{L^{2\sigma+2}}^{2\sigma+2} -\frac{\alpha}{4}\norm[\psi_n]_{L^{4\sigma+2}}^{4\sigma+2} 
\\
&\geq  \frac{\{(2\sigma+2)\alpha+\beta\} c}{2(2\sigma+2)} \norm[\psi_n]_{L^{2\sigma+2}}^{2\sigma+2} -\frac{\alpha}{4} \norm[\psi_n]_{L^{2\sigma+2}}^{2\sigma+2}\norm[\psi_n]_{L^\infty}^{2\sigma}
\\
&\geq \l( \frac{\{(2\sigma+2)\alpha+\beta\} c}{2(2\sigma+2)}  -\frac{\alpha}{4} \norm[\psi_n]_{L^\infty}^{2\sigma}\r)\norm[\psi_n]_{L^{2\sigma+2}}^{2\sigma+2}
\\
&>0, 
\end{align*}
for large $n\in \N$ since $\norm[\psi_n]_{L^\infty}\to0$ as $n \to \infty$. However, this contradicts $\til{K}(\psi_n)=0$ for all $n\in \N$. 
\end{proof}

\begin{proof}[Proof of {\propref[prop2.10]}] 
We take $\{\psi_n\} \subset X_{\omega,c}$ such that  $\til{S}_{\omega,c}(\psi_n) \to \mu_{\omega,c}^{\alpha,\beta}$ and $ \til{K}_{\omega,c}^{\alpha,\beta}(\psi_n) \to 0$. Then, $\{\psi_n\} $ is a bounded sequence in $X_{\omega,c}$ by \eqref[eq2.2]. 
\\
{\bf Step 1.} 
We prove $ \limsup_{n\to\infty}\norm[\psi_n]_{L^{4\sigma+2}}>0$ by contradiction. We suppose that $\limsup_{n\to\infty}\norm[\psi_n]_{L^{4\sigma+2}}=0$. Since we have
\begin{align*}
0\gets \til{K}(\psi_n)
&\geq -\frac{\beta}{2}\norm[\partial_x \psi_n]_{L^2}^2 
+\frac{\{(2\sigma+2)\alpha+\beta\} c}{2(2\sigma+2)} \norm[\psi_n]_{L^{2\sigma+2}}^{2\sigma+2} -\frac{\alpha}{4}\norm[\psi_n]_{L^{4\sigma+2}}^{4\sigma+2}, 
\end{align*}
we obtain $\norm[\partial_x \psi_n]_{L^2}^2 \to0$ and $\norm[\psi_n]_{L^{2\sigma+2}}^{2\sigma+2} \to 0 $ as $n \to \infty$. By \eqref[eq2.2], we get $\til{S}(\psi_n) \to 0$. This contradicts that $\mu>0$. 
\\
{\bf Step 2.} 
Since $\{\psi_n\} $ is bounded in $X_{\omega,c}= \dot{H}^1(\R)\cap L^{2\sigma+2}(\R)$ and $ \limsup_{n\to\infty}\norm[\psi_n]_{L^{4\sigma+2}}>0$, by applying \lemref[lem2.3] as $f_n=\psi_n$, $d=1$, and $p=2\sigma+2$, there exists $\{y_n\}$ and $v \in X_{\omega,c} \setminus \{0\}$ such that $\{\psi_n(\cdot-y_n)\}$ (we denote this by $v_n$) has a subsequence that converges to $v$ weakly in $X_{\omega,c}$. 
\\
{\bf Step 3.} 
We show 
\begin{equation}
\label{eq2.5}
\til{K}(v_n) -\til{K}(v-v_n) -\til{K}(v) \to 0 \text{ as } n \to \infty.
\end{equation}
We note that 
\begin{align} 
\label{eq2.6}
\til{K}(\psi)
=-\frac{\beta}{2}\norm[\partial_x \psi]_{L^2}^2 
&+\frac{\{(2\sigma+2)\alpha+\beta\} c}{2(2\sigma+2)} \norm[\psi]_{L^{2\sigma+2}}^{2\sigma+2}
\\ \notag
& -\frac{\alpha}{4}\norm[\psi]_{L^{4\sigma+2}}^{4\sigma+2}+\alpha \norm[\partial_x \psi + \frac{i}{2}|\psi|^{2\sigma} \psi]_{L^2}^2, 
\end{align}
for any $\psi \in X_{\omega,c}$. Since $v_n$ converges to $v$ weakly in $X_{\omega,c}$, we have $v_n \to v$ a.e. in $\R$. Therefore, by \lemref[lem2.4], we have $\norm[v_n]_{L^p}^p-\norm[v_n-v]_{L^p}^p-\norm[v]_{L^p}^p \to 0$ for $2\sigma+2 \leq p < \infty$. Moreover, setting 
\[w_n:=\partial_x v_n +\frac{i}{2} |v_n|^{2\sigma}v_n \text{ and } w=\partial_x v +\frac{i}{2} |v|^{2\sigma}v, \]
$w_n$ converges to $w$ weakly in $L^2(\R)$. Indeed, it is obvious that $\partial_x v_n \wto \partial_x v$ in $L^2(\R)$ and we have, for any $f \in C_{0}^{\infty}(\R)$,  
\begin{align*} 
\int_{\R} f(x) &(|v_n(x)|^{2\sigma}v_n(x)- |v(x)|^{2\sigma}v(x)) dx
\\
&\cleq \int_{\supp f} |f(x)| (|v_n(x)|^{2\sigma}+|v(x)|^{2\sigma})|v_n(x)- v(x)|dx
\\
& \cleq \int_{\supp f} |v_n(x)- v(x)|dx \to 0, 
\end{align*}
where we use the H\"{o}lder inequality, the fact that $\{v_n\}$ is bounded in $L^\infty(\R)$, the compactness of the embedding $\dot{H}^1(\Omega)\cap L^{2\sigma+2}(\Omega) \hookrightarrow H^1(\Omega)\hookrightarrow  L^{p}(\Omega)$ for a bounded domain $\Omega \subset \R$ and $1 \leq  p \leq \infty$. Thus,  $w_n$ converges to $w$ weakly in $L^2(\R)$. Therefore, by \eqref[eq2.6], we get \eqref[eq2.5].
\\
{\bf Step 4.} 
We prove $\alpha(2\sigma+2) \mu< \til{J}(\psi)$ if $\til{K}(\psi)<0$. By the definition of $\mu$, we have
\begin{equation}
\label{eq2.7}
\mu_{\omega,c}^{\alpha,\beta} 
=\frac{1}{\alpha(2\sigma+2)} \inf\{ \til{J}_{\omega,c}^{\alpha,\beta}(\psi):\psi \in X_{\omega,c}\setminus\{0\}, \til{K}_{\omega,c}^{\alpha,\beta}(\psi)=0 \}.
\end{equation}
If $\psi\in X_{\omega,c}$ satisfies $\til{K}(\psi)<0$, then there exists $\lambda_0 \in (0,1)$ such that $\til{K}(\lambda_0 \psi)=0$ since $\til{K}(\lambda \psi)>0$ for small $\lambda \in (0,1)$. Therefore, we have $\alpha(2\sigma+2) \mu \leq \til{J}(\lambda_0 \psi) < \til{J}(\psi)$.
\\
{\bf Step 5.} 
We prove $\til{K}(v) \leq 0$ by contradiction. We suppose that $\til{K}(v) > 0$. Since $\til{K}(v_n) \to 0$ and \eqref[eq2.5] hold, we have
\[ \til{K}(v-v_n) \to -\til{K}(v) <0.  \]
This implies that $\til{K}(v-v_n)<0$ for large $n\in\N$. Therefore, by Step 4, we get $\alpha(2\sigma+2)\mu<\til{J}(v-v_n)$ for large $n \in \N$. By the same argument as in Step 3, we get
\[ \til{J}(v_n) -\til{J}(v-v_n) -\til{J}(v) \to 0 \text{ as } n \to \infty.\]
Therefore, we get $\til{J}(v) = \lim_{n \to \infty} (\til{J}(v_n) -\til{J}(v-v_n)) \leq 0$ since we have $\til{J}(v_n) \to \alpha(2\sigma+2)\mu$ by the definition of $\til{J}$ and $\til{K}(v_n) \to0$. By Step 2, we have $v\neq0$ and then $\til{J}(v)>0$. This is a contradiction. 
\\
{\bf Step 6.} 
We prove that $v$ belongs to $\til{\scM}$. 
By \eqref[eq2.7] and the weakly lower semicontinuity of $\til{J}$, we obtain
\[ \alpha(2\sigma+2) \mu \leq \til{J}(v) \leq \liminf_{n \to \infty} \til{J}(v_n) =\alpha(2\sigma+2)\mu.  \]
Thus, $\til{J}(v)= \alpha(2\sigma+2) \mu$ and $v_n$ converges to $v$ strongly in $X_{\omega,c}$. Therefore, we get $\til{S}(v)=\mu$ and $\til{K}(v)=0$ by Step 4 and 5. 
\end{proof}

Therefore, we obtain \propref[prop2.1] when $\omega=c^2/4$ and $c>0$. 

The case of $\omega>c^2/4$ is much easier.  Indeed, we can obtain \propref[prop2.1] by the same argument as in the case $\omega=c^2/4$ and $c>0$ by using $L^2(\R)$ instead of $L^{2\sigma+2}(\R)$. See also Colin and Ohta \cite{CO06}, where they obtained the statement only for the Nehari functional $K_{\omega,c}^{1,0}$. Thus, we omit the proof.

\section{Global Well-Posedness}
\label{sec3}

In this section, we show \thmref[thm1.2].  
\begin{proof}[Proof of {\thmref[thm1.2]}]
Let $u_0$ belong to $\scK_{\omega,c}^{\alpha,\beta,+}$. 
First, we consider the case that $K_{\omega,c}^{\alpha,\beta}(u_0)= 0$. Then, $u_0=0$ or $u_0=e^{i\theta_0} \phi_{\omega,c}(\cdot-x_0)$ by \thmref[thm1.1]. By the uniqueness of solution to \eqref[gDNLS], we have $u(t)=0$ or $u(t)=e^{i\theta_0} e^{i\omega t} \phi_{\omega,c}(x-ct-x_0)$, respectively. This implies that $K_{\omega,c}^{\alpha,\beta}(u(t))= 0$ for all time. This means that $u(t) \in \scK_{\omega,c}^{\alpha,\beta,+}$ for all time. Next, we consider the case that $K_{\omega,c}^{\alpha,\beta}(u_0)> 0$. We suppose that there exists a time $t$ such that $K_{\omega,c}^{\alpha,\beta}(u(t)) \leq 0$. Then, there exists $t_*$ such that $K_{\omega,c}^{\alpha,\beta}(u(t_*)) = 0$ by the continuity of the flow. By the above argument, $K_{\omega,c}^{\alpha,\beta}(u(t))= 0$ for all time. This is a contradiction. Thus, $u(t)$ belongs to $\scK_{\omega,c}^{\alpha,\beta,+}$ for all time. 
When $u_0$ belongs to $\scK_{\omega,c}^{\alpha,\beta,-}$, the same argument implies that $u(t)$ belongs to $\scK_{\omega,c}^{\alpha,\beta,-}$ for all time. 
Next, we prove that the solution is global if $u_0 \in \scK_{\omega,c}^{\alpha,\beta,+}$. Then, since we have 
\begin{align}
\label{eq3.1}
\alpha (2\sigma+2) S_{\omega,c}(\varphi)
&=K_{\omega,c}^{\alpha,\beta}(\varphi)
+\frac{2\sigma\alpha+\beta}{2}\norm[\partial_x \varphi-\frac{c}{2}i \varphi]_{L^2}^2 
\\ \notag
&\quad +\l( \omega-\frac{c^2}{4}\r)\frac{2\sigma\alpha-\beta}{2}\norm[\varphi]_{L^2}^2 -\frac{\beta c}{2(2\sigma+2)}\norm[\varphi]_{L^{2\sigma+2}}^{2\sigma+2}
\end{align}
and $K_{\omega,c}^{\alpha,\beta}(u(t))> 0$ for all time $t$, we have that $\norm[\partial_x u(t)-\frac{c}{2}i u(t)]_{L^2}^2$ is uniformly bounded. 
Therefore, we  have
\[ \norm[\partial_xu(t)]_{L^2} \leq \norm[\partial_x u(t)-\frac{c}{2}i u(t)]_{L^2} + \frac{|c|}{2}\norm[u(t)]_{L^2} < C+ \frac{|c|}{2}\norm[u_0]_{L^2}, \]
for some positive constant $C$ independent of $t$. 
This boundedness and the conservation law of the $L^2$-norm imply that $u$ is global in time. 
\end{proof}

We give proofs of Corollaries \ref{cor1.3}, \ref{cor1.4}, and  \ref{cor1.5}.
Direct calculations imply the following lemma (see \cite{CO06} for the detail).

\begin{lemma}
\label{lem3.1}
Let $\sigma=1$ and $(\omega,c)$ satisfy \eqref[eq1.5]. Then, we have the following relations: 
\begin{align*}
M(\phi_{\omega,c})&=8\tan^{-1}\sqrt{\frac{2\sqrt{\omega}+c}{2\sqrt{\omega}-c}},
\\
P(\phi_{\omega,c})&=2\sqrt{4\omega-c^2},
\\
E(\phi_{\omega,c})&=-\frac{c}{2}\sqrt{4\omega-c^2}.
\end{align*}
In particular, we have 
\[ S_{\omega,c}(\phi_{\omega,c})= 4\omega \tan^{-1}\sqrt{\frac{2\sqrt{\omega}+c}{2\sqrt{\omega}-c}} + \frac{c}{2}\sqrt{4\omega-c^2}. \]
\end{lemma}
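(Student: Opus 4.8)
The plan is to write $\phi_{\omega,c}=\Phi e^{i\theta}$, where $\Phi=\Phi_{\omega,c}$ is the real positive profile of \eqref[eq1.3] (with $\sigma=1$ this is $\Phi^2=2\nu^2/(2\sqrt{\omega}\cosh(\nu x)-c)$, writing $\nu:=\sqrt{4\omega-c^2}$) and $\theta'=\tfrac c2-\tfrac14\Phi^2$ by \eqref[eq1.2]. Since $|\phi|=\Phi$, the mass is $M(\phi_{\omega,c})=\int_{\R}\Phi^2\,dx$, a single hyperbolic integral. First I would reduce it by the substitution $t=\tanh(\nu x/2)$, which turns $\int_{\R}(a\cosh(\nu x)-c)^{-1}dx$ (with $a:=2\sqrt{\omega}>|c|$) into the rational integral $\int_{-1}^{1}\tfrac{2}{\nu}\,((a-c)+(a+c)t^2)^{-1}dt$; this evaluates to $8\arctan\sqrt{(2\sqrt{\omega}+c)/(2\sqrt{\omega}-c)}$, giving the stated mass.

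For the momentum, a direct computation gives $\re(i\partial_x\phi\,\overline{\phi})=-\theta'\Phi^2$, so that $P(\phi_{\omega,c})=-\int_{\R}\theta'\Phi^2\,dx=-\tfrac c2 M(\phi_{\omega,c})+\tfrac14\int_{\R}\Phi^4\,dx$. The remaining integral $\int_{\R}\Phi^4\,dx=4\nu^4\int_{\R}(a\cosh(\nu x)-c)^{-2}dx$ is handled by the same substitution together with the elementary reduction $\int(B+At^2)^{-2}dt=\tfrac{t}{2B(B+At^2)}+\tfrac1{2B}\int(B+At^2)^{-1}dt$ (here $A=a+c$, $B=a-c$, $AB=\nu^2$); this yields $\int_{\R}\Phi^4\,dx=8\nu+2cM(\phi_{\omega,c})$, and hence $P(\phi_{\omega,c})=2\nu$ once the $cM$ terms cancel.

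For the energy I would avoid any further quadrature by combining two identities. Writing $|\partial_x\phi|^2=(\Phi')^2+(\theta')^2\Phi^2$ and inserting the first integral of \eqref[elliptic], namely $(\Phi')^2=\tfrac{\nu^2}4\Phi^2+\tfrac c4\Phi^4-\tfrac1{16}\Phi^6$ (obtained by multiplying \eqref[elliptic] by $\Phi'$ and integrating, the constant being $0$ since $\Phi,\Phi'\to0$), the $\Phi^4$ and $\Phi^6$ contributions cancel against those of $(\theta')^2\Phi^2=\tfrac{c^2}4\Phi^2-\tfrac c4\Phi^4+\tfrac1{16}\Phi^6$, leaving $\norm[\partial_x\phi_{\omega,c}]_{L^2}^2=\omega M(\phi_{\omega,c})$. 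Separately, pairing the profile equation $-\phi''+\omega\phi+ic\phi'-i|\phi|^2\phi'=0$ with $\overline{\phi}$, taking real parts, and integrating gives the Nehari-type relation $N(\phi_{\omega,c})=\norm[\partial_x\phi_{\omega,c}]_{L^2}^2+\omega M(\phi_{\omega,c})+cP(\phi_{\omega,c})$. Substituting both into $E=\tfrac12\norm[\partial_x\phi_{\omega,c}]_{L^2}^2-\tfrac14 N(\phi_{\omega,c})$ collapses everything to $E(\phi_{\omega,c})=-\tfrac c4 P(\phi_{\omega,c})=-\tfrac c2\nu$, and the action formula then follows at once from $S_{\omega,c}=E+\tfrac\omega2 M+\tfrac c2 P$. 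The only real obstacle is the bookkeeping: the two hyperbolic quadratures (particularly the reduction for $\int_{\R}\Phi^4\,dx$) and keeping the constants $a=2\sqrt{\omega}$, $\nu=\sqrt{4\omega-c^2}$, and $\omega-c^2/4=\nu^2/4$ straight; everything else is algebra.
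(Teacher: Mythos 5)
Your computation is correct and is exactly the "direct calculation" the paper omits (it defers to Colin--Ohta \cite{CO06}): the mass and $\int\Phi^4$ quadratures via $t=\tanh(\nu x/2)$, the identity $\norm[\partial_x\phi]_{L^2}^2=\omega M(\phi)$ from the first integral of \eqref[elliptic], and the Nehari relation all check out and reproduce the stated formulae. The only cosmetic remark is that your substitution is written for $\omega>c^2/4$; the endpoint case $\omega=c^2/4$, $c>0$ (where $\nu=0$ and $\Phi^2=4c/(c^2x^2+1)$) follows by the same elementary integrals or by continuity, consistent with $M=4\pi$, $P=E=0$ as noted in the paper's subsequent remark.
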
 

\begin{remark}
When $\sigma=1$, we have $M(\phi_{c^2/4,c})=4\pi$, $P(\phi_{c^2/4,c})=0$, and $E(\phi_{c^2/4,c})=0$ for all $c>0$ by \lemref[lem3.1]. On the other hand, if $M(\phi)=4\pi$, $P(\phi)=0$, and $E(\phi)\le 0$, then $\phi(x)=e^{i\theta_0}\phi_{c_0^2/4,c_0}(x-x_0)$ for some $\theta_0\in\R$, $x_0\in\R$, and $c_0 >0$. Indeed, $M(\phi)=4\pi$, $P(\phi)=0$, and $E(\phi)\le 0$ imply that
\begin{align*}
K_{c^2/4,c}^{\alpha,\beta}(\phi)
&\le - \frac{2\alpha +\beta}{2}\left\Vert \partial_x \phi \right\Vert_{L^2}^2 + \frac{2\alpha-\beta}{2} c^2 \pi +\frac{\beta c}{8} \left\Vert \phi \right\Vert_{L^4}^{4}.
\end{align*}
Since $K_{c^2/4,c}^{\alpha,\beta}(\phi)<0$ for small $c>0$ and $K_{c^2/4,c}^{\alpha,\beta}(\phi) \to +\infty$ as $c \to \infty$, there exists $c_0>0$ such that $K_{c_0^2/4,c_0}^{\alpha,\beta}(\phi)=0$. Therefore, \thmref[thm1.1] implies that $\phi(x)=e^{i\theta_0}\phi_{c_0^2/4,c_0}(x-x_0)$. Note that this means that there is no function satisfying $M(\phi)=4\pi$, $P(\phi)=0$, and $E(\phi)<0$. 
\end{remark}

First, we prove \corref[cor1.3].

\begin{proof}[Proof of {\corref[cor1.3]}]
Let $u_0$ satisfy $\norm[u_0]_{L^2}^2<4\pi$. The statement is trivial if $u_0=0$. We assume that $u_0\neq0$. Since $\norm[u_0]_{L^2}^2<4\pi$, we have
\[ S_{c^2/4,c}(u_0)=E(u_0)+\frac{c^2}{8}\norm[u_0]_{L^2}^2 +\frac{c}{2}P(u_0)< c^2 \pi/2,  \]
for sufficiently large $c>0$. Moreover, since $\norm[u_0]_{L^2}^2\neq 0$, we have
\begin{align}
\label{eq3.2}
K_{c^2/4,c}^{\alpha,\beta}(u_0) 
&= \frac{2\alpha-\beta}{2}\norm[\partial_x u_0]_{L^2}^2 + \frac{2\alpha-\beta}{2} \frac{c^2}{4} \norm[u_0]_{L^2}^2 +\frac{2\alpha-\beta}{2}c P(u_0) 
\\ \notag
& \quad+\frac{\beta c}{8} \norm[u_0]_{L^{4}}^{4} -\alpha N(u_0)
\\ \notag
& \to \infty \text{ as } c \to \infty,
\end{align}
for any $(\alpha,\beta)$ satisfying \eqref[eq1.7]. 
Thus, $K_{c^2/4,c}^{\alpha,\beta}(u_0)>0$ for large $c>0$. Thus, there exists $c>0$ such that $K_{c^2/4,c}^{\alpha,\beta}(u_0)>0$ and $S_{c^2/4,c}(u_0)< c^2 \pi/2$, where we note that $\mu_{c^2/4,c}=c^2 \pi/2$ by \lemref[lem3.1] when $\sigma=1$. By \thmref[thm1.2], the solution $u$ is global. 
\end{proof}

Secondly, we give a proof of \corref[cor1.4] by \thmref[thm1.2]. 

\begin{proof}[Proof of {\corref[cor1.4]}]
Let $u_0$ satisfy $\norm[u_0]_{L^2}^2=4\pi$ and $P(u_0)<0$. We recall that $\mu_{c^2/4,c}=c^2 \pi/2$ by \lemref[lem3.1] when $\sigma=1$.
Since $P(u_0)<0$, we have, for large $c>0$, 
\[ S_{c^2/4,c}(u_0)=E(u_0) + \frac{c^2}{2} \pi +\frac{c}{2}P(u_0) \leq  \mu_{c^2/4,c}. \]
On the other hand, by $2\alpha-\beta>0$ and $\norm[u_0]_{L^2}^2\neq 0$, we obtain \eqref[eq3.2]. 
Thus, $K_{c^2/4,c}^{\alpha,\beta}(u_0)>0$ for large $c>0$. This means that the assumption in \thmref[thm1.2] holds for sufficiently large $c$. This implies that $u$ is global.
\end{proof}

We give another proof. This is due to Guo and Wu \cite{GW17}.
\begin{proof}[Another proof of {\corref[cor1.4]}]
We have 
\begin{align*}
P(u) \geq \frac{1}{4} \norm[u]_{L^4}^4 \l( 1-\frac{\norm[u]_{L^2}}{2\sqrt{\pi}}\r) - \frac{8\sqrt{\pi}E(u)\norm[u]_{L^2}}{\norm[u]_{L^4}^4},
\end{align*}
applying the gauge transformation $u=w\exp(-\frac{i}{4}\int_{-\infty}^{x} |w(y)|^2dy)$ to \eqref[eq1.15]. 
See \cite[Lemma 2.1]{GW17} for the proof of \eqref[eq1.15]. When $\norm[u_0]_{L^2}^2=4\pi$ and $P(u_0)<0$, we get 
\begin{align}
\label{eq3.3}
\norm[u(t)]_{L^4}^4  \leq  \frac{8\sqrt{\pi}E(u_0)\norm[u_0]_{L^2}}{|P(u_0)|}.
\end{align}
Therefore, by the H\"{o}lder inequality, the Gagliardo--Nirenberg inequality, and the Young inequality, we have
\begin{align*}
	\norm[\partial_x u(t)]_{L^2}^2 
	&= 2 E(u_0) +\frac{1}{2}\re \int_{\R} i |u(t,x)|^2 \overline{u(t,x)} \partial_x u(t,x) dx
	\\
	& \leq 2E(u_0) + \frac{1}{2} \norm[u(t)]_{L^6}^3 \norm[ \partial_x u(t)]_{L^2}
	\\
	& \leq 2E(u_0) + C \norm[u(t)]_{L^4}^{8/3} \norm[ \partial_x u(t)]_{L^2}^{4/3}
	\\
	& \leq 2E(u_0) + C \norm[u(t)]_{L^4}^{8} + \frac{1}{2} \norm[ \partial_x u(t)]_{L^2}^{2}.
\end{align*}
This inequality and \eqref[eq3.3] give a priori estimate and thus the solution is global. 
\end{proof}

At last, we prove \corref[cor1.5]. 

\begin{proof}[Proof of {\corref[cor1.5]}]
Let $\sigma \geq 1$. 
Since $u_{0,c}=e^{\frac{c}{2}ix}\psi$, we have
\begin{align*} 
S_{c^2/4,c}(u_{0,c})
&=\til{S}_{c^2/4,c}(\psi)
\\
&=\frac{1}{2}\norm[\partial_x \psi]_{L^2}^2  +\frac{c}{2(2\sigma+2)}\norm[\psi]_{L^{2\sigma+2}}^{2\sigma+2} -\frac{1}{2\sigma+2}N(\psi)
\\
& \leq c^{1+\frac{1}{\sigma}}S_{1/4,1}(\phi_{1/4,1})= S_{c^2/4,c}(\phi_{c^2/4,c}),
\\
K_{c^2/4,c}^{\alpha,\beta}(u_{0,c}) 
&=\til{K}_{c^2/4,c}^{\alpha,\beta}(\psi)
\\
&=\frac{2\alpha-\beta}{2}\norm[\partial_x \psi]_{L^2}^2 
+\frac{\{(2\sigma+2)\alpha+\beta\} c}{2(2\sigma+2)} \norm[\psi]_{L^{2\sigma+2}}^{2\sigma+2} -\alpha N(\psi)
\\
&\geq 0, 
\end{align*}
for large $c>0$. By \thmref[thm1.2], therefore, the solution $u_c$ with the initial data $u_{0,c}$ is global for large $c>0$. 
\end{proof}

\appendix
\section{Uniqueness and Non-existence}
\label{appA}

We prove the uniqueness of the massless elliptic equation.

\begin{proposition}
Let $1<p<q<\infty$, $a>0$, and $b>0$. Assume there exists a non-trivial solution in $\dot{H}^1(\R)\cap L^{p+1}(\R)$ of the equation
\begin{align}
\label{eqA.1}
-\varphi'' +a|\varphi|^{p-1}\varphi - b|\varphi|^{q-1}\varphi =0
\end{align} 
 in the distribution sense. Then there exist $\theta_0 \in [0,2\pi ), x_0\in \R$ such that $\varphi = e^{i\theta_0} \psi (\cdot-x_0)$, where $\psi$ is the unique positive, even, and decreasing function which satisfies \eqref[eqA.1].
\end{proposition}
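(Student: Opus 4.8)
The plan is to reduce the complex PDE to a single \emph{real} second-order autonomous ODE and then read off uniqueness from the classical phase-plane picture of its homoclinic orbit, mirroring the gauge/Wronskian device already used in \lemref[lem2.6.1].

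First I would upgrade regularity and extract decay. By the one-dimensional embedding $\dot H^1(\R)\cap L^{p+1}(\R)\hookrightarrow L^\infty(\R)$ behind \eqref[eq2.1], a distributional solution $\varphi$ is bounded, so the coefficient $A:=a|\varphi|^{p-1}-b|\varphi|^{q-1}$ is bounded; a bootstrap on $\varphi''=A\varphi$ then makes $\varphi$ a genuine $C^2$ (indeed smooth) solution with $\varphi'\in L^2$. Uniform continuity together with $\varphi\in L^{p+1}$ forces $\varphi(x)\to0$ as $|x|\to\infty$, while $\varphi'\in L^2$ yields a sequence $x_n\to\infty$ with $\varphi'(x_n)\to0$. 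Next comes the phase-rigidity step: writing $\varphi=f+ig$ with $f,g$ real, both parts satisfy $u''=Au$ with the \emph{same} real $A$, so the Wronskian $W=fg'-gf'$ is constant; the decay just obtained gives $W\equiv0$, whence $f$ and $g$ are linearly dependent and $\varphi=e^{i\theta_0}\Phi$ for a real $\Phi$ solving $-\Phi''+a|\Phi|^{p-1}\Phi-b|\Phi|^{q-1}\Phi=0$.

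For the real ODE I would multiply by $\Phi'$ and integrate to obtain the first integral
\[ \tfrac12(\Phi')^2=V(\Phi),\qquad V(s):=\frac{a}{p+1}|s|^{p+1}-\frac{b}{q+1}|s|^{q+1}, \]
the integration constant vanishing because of the sequence $x_n$ above. If $\Phi(x_1)=0$ at a finite point, then $V(0)=0$ forces $\Phi'(x_1)=0$ as well, and uniqueness for the ODE (the nonlinearity $s\mapsto|s|^{p-1}s$ is $C^1$ for $p>1$, hence locally Lipschitz) would give $\Phi\equiv0$; thus a nontrivial $\Phi$ never changes sign, and absorbing a sign into $e^{i\theta_0}$ we may take $\Phi>0$. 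The relation $\tfrac12(\Phi')^2=V(\Phi)\ge0$ confines $\Phi$ to $(0,M]$, where $M=(a(q+1)/(b(p+1)))^{1/(q-p)}$ is the unique positive zero of $V$, and separation of variables $dx=\pm\,d\Phi/\sqrt{2V(\Phi)}$ identifies the trajectory as the single homoclinic loop based at the origin, unique up to translation. Translating the maximum to the origin makes $\Phi$ even (by the $x\mapsto-x$ symmetry and ODE uniqueness at the critical point $\Phi'(0)=0$) and strictly decreasing on $(0,\infty)$; this is the asserted $\psi$, and its existence is furnished by the same phase-plane orbit.

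The hard part will be the very first step: promoting a mere distributional $\dot H^1\cap L^{p+1}$ solution to a classical, decaying one so that the Wronskian and the first-integral constant may legitimately be evaluated at infinity. Once $C^2$-regularity and the decay $\varphi(x)\to0$, $\varphi'(x_n)\to0$ are secured, the Wronskian and phase-plane arguments are routine; I would still verify the integrability $\int d\Phi/\sqrt{2V(\Phi)}$ near $\Phi=M$ (a simple zero of $V$, so convergent, giving a finite location for the peak) and its \emph{non}-integrability near $\Phi=0$ (where $V\sim s^{p+1}$ with $p>1$, so divergent), which together confirm that the orbit reaches the origin only as $|x|\to\infty$ and is therefore a genuine homoclinic.
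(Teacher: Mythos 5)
Your proposal is correct and follows essentially the same route as the paper: regularity bootstrap to a classical decaying solution, reduction to a real profile via the conserved quantity $fg'-gf'=\im(\overline{\varphi}\varphi')$ (the paper phrases this as the constancy of the phase in the polar decomposition $\varphi=\rho e^{i\theta}$), the first integral $\tfrac12(\Phi')^2=V(\Phi)$ with vanishing constant, and ODE uniqueness from the critical point where $\Phi$ attains the value $M=\bigl(a(q+1)/(b(p+1))\bigr)^{1/(q-p)}$. Your treatment is in fact marginally more careful than the paper's at one point: by ruling out interior zeros of $\Phi$ via the first integral and local Lipschitz uniqueness, you justify the sign-definiteness that the paper's ansatz $\varphi=\rho e^{i\theta}$ with $\rho>0$ tacitly assumes.
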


\begin{proof}
Since $a|\varphi|^{p-1}\varphi - b|\varphi|^{q-1}\varphi$  belongs to $L^2(\R)$, we obtain $\varphi \in \dot{H}^2(\R)$. A bootstrap argument gives us that $\varphi \in \dot{H}^3(\R)$. By the Sobolev embedding, $\varphi \in C^2(\R)$ and $\varphi$ satisfies the equation in the classical sense. Multiplying the equation by $\varphi'$ and integrating on $(-\infty ,x)$, we obtain
\begin{equation} 
\label{eqA.2}
-\frac{1}{2}|\varphi'( x)|^2 +\frac{a}{p+1}|\varphi(x)|^{p+1} -\frac{b}{q+1}|\varphi(x)|^{q+1}=0.
\end{equation}
We write $\varphi = \rho e^{i\theta}$, where $\rho >0$ and $\rho , \theta \in C^2(\mathbb{R})$. It is easily seen that $\theta \equiv \theta_0$ for some $\theta_0 \in [0, 2\pi )$. Since $\rho \in L^{p+1}(\R )$, there must exist $x_0 \in \R$ such that $\rho'(x_0)=0$. By \eqref[eqA.2], $\rho(x_0)=c$, where $c^{q-p}=\frac{a(q+1)}{b(p+1)}$. Let $\psi$ be the real-valued solution of \eqref[eqA.1] such that $\psi(0)=c$ and $\psi '(0)=0$. 
Using the uniqueness of the ordinary differential equation, we can deduce that $\varphi = e^{i\theta_0} \psi (\cdot-x_0)$. 
\end{proof}

We prove the non-existence of a non-trivial solution to the nonlinear elliptic equation \eqref[elliptic] in the case $\omega<c^2/4$, or $\omega=c^2/4$ and $c\leq 0$. This is covered by \cite[Theorem 5]{BL83}.  See \cite[Theorem 5]{BL83} for the necessary and sufficient condition for the existence of non-trivial solutions to more general second order ordinary differential equations.

\begin{proposition}
\label{propA.2}
Let $1<p,q<\infty$. If $\varphi \in H^1(\R)$ satisfies
\[ -\varphi'' +\omega \varphi +a|\varphi|^{p-1}\varphi-b|\varphi|^{q-1}\varphi=0 \text{ in the distribution sense,} \]
where $a>0$, $b>0$, and $\omega<0$, then we have $\varphi=0$. 
\end{proposition}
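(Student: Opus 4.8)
The plan is to exploit the one-dimensional ODE structure: I would show that any $H^1(\R)$ solution carries a conserved first integral, pin down its constant using the decay at infinity, and then run a phase-plane argument near the origin, where the sign of $\omega$ becomes decisive. Throughout, the point where the hypothesis $\omega<0$ enters is a local sign computation, not an integration by parts over the whole line.

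First I would settle regularity and decay. Since $H^1(\R)\hookrightarrow L^\infty(\R)$, the solution $\varphi$ is bounded, so the nonlinear terms $a|\varphi|^{p-1}\varphi$ and $b|\varphi|^{q-1}\varphi$ lie in $L^2(\R)$; hence $\varphi''\in L^2(\R)$, giving $\varphi\in \dot H^2\hookrightarrow C^1$, and a short bootstrap (exactly as in the previous proposition) yields $\varphi\in C^2(\R)$ with the equation holding classically. Because $H^1(\R)$ embeds into $C_0(\R)$, both $\varphi$ and $\varphi'$ tend to $0$ as $|x|\to\infty$.

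Next I would derive the first integral. Multiplying $\varphi''=\omega\varphi+a|\varphi|^{p-1}\varphi-b|\varphi|^{q-1}\varphi=:F(\varphi)$ by $\overline{\varphi'}$ and taking the real part, and using $\re(\varphi\overline{\varphi'})=\tfrac12(|\varphi|^2)'$, one obtains $\frac{d}{dx}\bigl[\tfrac12|\varphi'|^2-\til{F}(|\varphi|)\bigr]=0$, where $\til{F}(s):=\tfrac{\omega}{2}s^2+\tfrac{a}{p+1}s^{p+1}-\tfrac{b}{q+1}s^{q+1}$ (note $\til{F}(|\varphi|)$ is a $C^1$ function of $x$ because $p,q>1$). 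Letting $|x|\to\infty$ and using $\varphi,\varphi'\to 0$ fixes the constant at $0$, so $\tfrac12|\varphi'(x)|^2=\til{F}(|\varphi(x)|)$ for every $x$. As an alternative to working with $|\varphi|$ directly, the phase could first be frozen exactly as in Lemma~\ref{lem2.6.1}, via $\im(\overline{\varphi}\varphi')\equiv 0$, reducing everything to the real-valued equation.

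The decisive step is the local sign of $\til{F}$ near $0$: since $\omega<0$ and $p>1$, one has $\til{F}(s)=\tfrac{\omega}{2}s^2\bigl(1+o(1)\bigr)<0$ for all small $s>0$. As $|\varphi(x)|\to 0$, it follows that for all large $x$ we would have $\til{F}(|\varphi(x)|)<0$ whenever $\varphi(x)\neq 0$, contradicting $\til{F}(|\varphi|)=\tfrac12|\varphi'|^2\ge 0$; hence $\varphi\equiv 0$ for large $x$. At any point where $\varphi=0$, the first integral forces $\varphi'=0$ as well, and Cauchy--Lipschitz uniqueness for $\varphi''=F(\varphi)$ (the nonlinearity is $C^1$ since $p,q>1$) then propagates this to $\varphi\equiv 0$ on all of $\R$. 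I expect this last conceptual point to be the heart of the matter: making rigorous that $\omega<0$ turns the origin into a center of the phase portrait, so that the zero-energy level set cannot support a nontrivial orbit converging to the origin. The regularity reduction and the evaluation of the first integral are routine by comparison, and the whole argument is the specialization to this ODE of \cite[Theorem~5]{BL83}.
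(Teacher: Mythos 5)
Your proposal is correct and follows essentially the same route as the paper's proof: regularity bootstrap to $C^2$ with decay at infinity, the first integral $\tfrac12|\varphi'|^2=\tfrac{\omega}{2}|\varphi|^2+\tfrac{a}{p+1}|\varphi|^{p+1}-\tfrac{b}{q+1}|\varphi|^{q+1}$ with constant fixed to zero, the observation that $\omega<0$ makes the right-hand side negative for small nonzero $|\varphi|$, and ODE uniqueness from a point where $\varphi=\varphi'=0$. Your treatment of the complex-valued case (multiplying by $\overline{\varphi'}$ and taking real parts, noting $|\varphi|^{p+1}$ is $C^1$ since $p>1$) is in fact slightly more careful than the paper's phrasing.
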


\begin{proof}
By a usual bootstrap argument (see Section 8 in \cite{Caz03}), we have $\varphi \in H^3(\R)$. We get $\varphi \in C^2(\R)$ by the Sobolev embedding. Therefore, $\varphi '(x) \to 0$ and $\varphi(x)\to0$ as $|x|\to\infty$. Multiplying the equation by $\varphi'$ and integrating on $(-\infty ,x)$, we obtain
\begin{equation} 
\label{eqA.3}
-\frac{1}{2}|\varphi'(x)|^2 +\frac{\omega}{2}|\varphi(x)|^2 +\frac{a}{p+1}|\varphi(x)|^{p+1} -\frac{b}{q+1}|\varphi(x)|^{q+1}=0.
\end{equation}
Since $\varphi(x)\to0$ as $|x|\to\infty$, we get 
\[  \frac{\omega}{2}|\varphi(x)|^2 +\frac{a}{p+1}|\varphi(x)|^{p+1} -\frac{b}{q+1}|\varphi(x)|^{q+1}< 0, \text{ for some }x \]
or 
\[  |\varphi(x)|= 0, \text{ for some }x. \]
In the former case, we obtain $|\varphi'(x)|< 0$ by \eqref[eqA.3]. This is a contradiction. In the latter case, we obtain $|\varphi'(x)|=0$ by \eqref[eqA.3]. By the uniqueness of the ordinary differential equation, we get $\varphi=0$.
\end{proof}

By the same argument as in the proof of \propref[propA.2], we obtain the non-existence of a non-trivial solution to the nonlinear elliptic equation \eqref[elliptic] when $\omega=c^2/4$ and $c\leq 0$ as follows.

\begin{proposition}
Let $1<p,q<\infty$. If $\varphi \in \dot{H}^1(\R)\cap L^{p+1}(\R)$ satisfies
\[ -\varphi'' -a|\varphi|^{p-1}\varphi-b|\varphi|^{q-1}\varphi=0 \text{ in the distribution sense,} \]
where $a \geq 0$ and $b>0$, then we have $\varphi=0$. 
\end{proposition}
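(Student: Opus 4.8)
The plan is to follow the proof of \propref[propA.2] almost verbatim, the only---and decisive---difference being that here every term of the resulting first-order identity carries the same sign, so no case distinction is needed.

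First I would upgrade the regularity of $\varphi$. Since $\varphi\in\dot H^1(\R)\cap L^{p+1}(\R)$, this regularity forces $\varphi\in L^\infty(\R)$ (the uniform $1/2$-H\"older continuity coming from $\varphi'\in L^2$ is incompatible with unboundedness once $\varphi\in L^{p+1}$, as is also reflected by the Gagliardo--Nirenberg inequality \eqref[eq2.1]). Consequently the nonlinearity $a|\varphi|^{p-1}\varphi+b|\varphi|^{q-1}\varphi$ belongs to $L^2(\R)$, the equation gives $\varphi\in\dot H^2(\R)$, and a bootstrap yields $\varphi\in\dot H^3(\R)\subset C^2(\R)$. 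In particular $\varphi$ solves the equation classically, and $\varphi(x)\to0$, $\varphi'(x)\to0$ as $|x|\to\infty$: the first using $\varphi\in L^{p+1}$ together with uniform continuity, and the second using $\varphi'\in L^2$ together with $\varphi''=-a|\varphi|^{p-1}\varphi-b|\varphi|^{q-1}\varphi\to0$.

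Next I would multiply the equation by $\overline{\varphi'}$, take real parts, and integrate over $(-\infty,x)$, exactly as in the derivation of \eqref[eqA.3]. Because the mass term is absent and the two nonlinear coefficients both enter with a minus sign, the boundary contributions at $-\infty$ vanish by the decay above and one is left with
\[ \frac12|\varphi'(x)|^2 = -\frac{a}{p+1}|\varphi(x)|^{p+1}-\frac{b}{q+1}|\varphi(x)|^{q+1}, \quad x\in\R. \]
The conclusion is then immediate: the left-hand side is nonnegative while the right-hand side is nonpositive because $a\ge0$ and $b>0$, so both sides vanish for every $x$. In particular $|\varphi(x)|^{q+1}\equiv0$, hence $\varphi\equiv0$.

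I would expect the only genuine work to be in the regularity-and-decay step: in the massless space $\dot H^1\cap L^{p+1}$ one cannot invoke the $H^1$ Sobolev embedding directly, so the $L^\infty$-bound and the decay at infinity must be obtained through \eqref[eq2.1] and a bootstrap rather than quoted off the shelf. Once $\varphi\in C^2(\R)$ with the stated decay is in hand, the sign argument that closes the proof is, in contrast to \propref[propA.2], entirely elementary---there is no appeal to ODE uniqueness and no subcase analysis, since all terms of the first-order identity already have the same sign.
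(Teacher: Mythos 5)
Your proof is correct and takes essentially the same approach as the paper, which offers no separate argument for this proposition and simply asserts that it follows ``by the same argument as in the proof of Proposition~\ref{propA.2}''---precisely the bootstrap-plus-first-integral scheme you carry out, with the regularity and decay steps adapted to $\dot{H}^1(\R)\cap L^{p+1}(\R)$ via \eqref[eq2.1]. Your observation that all terms of the resulting identity share the same sign, so that the case analysis and the appeal to ODE uniqueness in Proposition~\ref{propA.2} become unnecessary, is accurate.
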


\section*{Acknowledgement}
The authors would like to express deep appreciation to Professor Kenji Nakanishi for constant encouragement, Professor Masahito Ohta for many useful suggestions, and Professor Tohru Ozawa for advice on notations. The third author is supported by Grant-in-Aid for JSPS Research Fellow 15J02570. The authors also would like to thank Guixiang Xu for introducing their works, Zihua Guo for a suggestion about Corollary \ref{cor1.4},  and the anonymous referee for his variable comments.



\begin{thebibliography}{99}

\bibitem{BFV14} Jacopo Bellazzini, Rupert L. Frank, Nicola Visciglia, 
\textit{Maximizers for Gagliardo--Nirenberg inequalities and related non-local problems}, 
Math. Ann. {\bf 360} (2014), no. 3-4, 653--673.

\bibitem{BGL14}
Jacopo Bellazzini, Marco Ghimenti, Stefan Le Coz,
\textit{Multi-solitary waves for the nonlinear Klein-Gordon equation},
Comm. Partial Differential Equations {\bf 39} (2014), no. 8, 1479--1522. 

\bibitem{BL83}
Henri Berestycki, Pierre-Louis Lions,  
\textit{Nonlinear scalar field equations. I. Existence of a ground state}, 
Arch. Rational Mech. Anal. {\bf 82} (1983), no. 4, 313--345.


\bibitem{BL01}
Hebe A. Biagioni, Felipe Linares,
\textit{Ill-posedness for the derivative Schr\"{o}dinger and generalized Benjamin-Ono equations}, 
Trans. Amer. Math. Soc. {\bf 353} (2001), no. 9, 3649--3659. 

\bibitem{BL83} Ha\"{i}m, Br\'{e}zis, Elliott H. Lieb, 
\textit{A relation between pointwise convergence of functions and convergence of functionals}, 
Proc. Amer. Math. Soc. {\bf 88} (1983), no. 3, 486--490.

\bibitem{Caz03} Thierry Cazenave, 
\textit{Semilinear Schr\"{o}dinger Equations}, 
Courant Lecture Notes in Mathematics, vol. 10, American Mathematical Society, Courant Institute of Mathematical Sciences, 2003.

\bibitem{CKSTT01}
James E. Colliander, Markus Keel, Gigliola Staffilani, Hideo Takaoka, Terence Tao, 
\textit{Global well-posedness for Schr\"{o}dinger equations with derivative},
SIAM J. Math. Anal. {\bf 33} (2001), no. 3, 649--669.

\bibitem{CKSTT02}
James E. Colliander, Markus Keel, Gigliola Staffilani, Hideo Takaoka, Terence Tao, 
\textit{A refined global well-posedness result for Schr\"{o}dinger equations with derivative}, 
SIAM J. Math. Anal. {\bf 34} (2002), no. 1, 64--86.

\bibitem{CO06} Mathieu Colin, Masahito Ohta, 
\textit{Stability of solitary waves for derivative nonlinear Schr\"{o}dinger equation},
Ann. I. H. Poincar\'{e} -- AN {\bf 23} (2006) 753--764.


\bibitem{Fuk16} Noriyoshi Fukaya,
\textit{Instability of solitary waves for a generalized derivative nonlinear Schr\"{o}dinger equation in a borderline case},
Kodai Math. J., to appear.

\bibitem{GSS87} Manoussos Grillakis, Jalal Shatah, Walter Strauss, 
\textit{Stability theory of solitary waves in the presence of symmetry. I}, 
J. Funct. Anal. {\bf 74} (1987), no. 1, 160--197.

\bibitem{GSS90} Manoussos Grillakis, Jalal Shatah, Walter Strauss, 
\textit{Stability theory of solitary waves in the presence of symmetry. II}, 
J. Funct. Anal. {\bf 94} (1990), no. 2, 308--348.

\bibitem{GW95} Boling Guo, Yaping Wu, 
\textit{Orbital stability of solitary waves for the nonlinear derivative Schr\"{o}dinger equation}, 
J. Differential Equations {\bf 123} (1995), no. 1, 35--55.

\bibitem{GW17}
Zihua Guo, Yifei Wu, 
\textit{Global well-posedness for the derivative nonlinear Schr\"{o}dinger equation in $H^{1/2}(\R)$}, 
Discrete Contin. Dyn. Syst. {\bf 37} (2017), no. 1, 257--264.


\bibitem{Hao07} Chengchun Hao, 
\textit{Well-posedness for one-dimensional derivative nonlinear Schr\"{o}dinger equations},
Commun. Pure Appl. Anal. {\bf 6} (2007), no. 4, 997--1021

\bibitem{HO16} Masayuki Hayashi, Tohru Ozawa, 
\textit{Well-posedness for a generalized derivative nonlinear Schr\"{o}dinger equation}, 
J. Differential Equations {\bf 261} (2016), no. 10, 5424-5445.

\bibitem{H93} Nakao Hayashi, 
\textit{The initial value problem for the derivative nonlinear Schr\"{o}dinger equation in the energy space},
Nonlinear Anal. {\bf 20} (1993), 823--833.

\bibitem{HO92} Nakao Hayashi, Tohru Ozawa,
\textit{On the derivative nonlinear Schr\"{o}dinger equation},
Phys. D. {\bf 55} (1992), 14--36.

\bibitem{HO94_1} Nakao Hayashi, Tohru Ozawa, 
\textit{Finite energy solution of nonlinear Schr\"{o}dinger equations of derivative type}, 
SIAM J. Math. Anal. {\bf 25} (1994), 1488--1503.

\bibitem{HO94_2} Nakao Hayashi, Tohru Ozawa,
\textit{Remarks on nonlinear Schr\"{o}dinger equations in one space dimension},
Diff. Integral Eqs. {\bf 2} (1994), 453--461.


\bibitem{KW16pre} Soonsik Kwon, Yifei Wu,
\textit{Orbital stability of solitary waves for derivative nonlinear Schr\"{o}dinger equation},
preprint, arXiv:1603.03745.


\bibitem{LCW16pre} Stefan Le Coz, Yifei Wu, 
\textit{Stability of multi-solitons for the derivative nonlinear Schr\"{o}dinger equation},
preprint, arXiv:1609.04589.

\bibitem{Lie83} Elliott H. Lieb, 
\textit{On the lowest eigenvalue of the Laplacian for the intersection of two domains}, 
Invent. Math. {\bf 74} (1983), no. 3, 441--448.

\bibitem{LSS13} Xiao Liu, Gideon Simpson, Catherine Sulem, 
\textit{Stability of Solitary Waves for a Generalized Derivative Nonlinear Schr\"{o}dinger Equation},
Journal of Nonlinear Science, {\bf 23} (2013), no 4, 557--583. 

\bibitem{MTX16pre_1} Changxing Miao, Xingdong Tang, Guixiang Xu, 
\textit{Solitary waves for nonlinear Schr\"{o}dinger equation with derivative},
preprint, arXiv:1702.07853. 


\bibitem{MTX16pre_2} Changxing Miao, Xingdong Tang, Guixiang Xu, 
\textit{Stability of the traveling waves for the derivative Schr\"{o}dinger equation in the energy space},
preprint, arXiv:1702.07856.


\bibitem{MWX11}
Changxing Miao, Yifei  Wu, Guixiang Xu,  
\textit{Global well-posedness for Schr\"{o}dinger equation with derivative in $H^{1/2}(\R)$}, J. Differential Equations {\bf 251} (2011), no. 8, 2164--2195.

\bibitem{MOMT76} Koji Mio, Tatsuki Ogino, Kazuo Minami, Susumu Takeda,
\textit{Modified Nonlinear Schr\"{o}dinger Equation for Alfv\'{e}n Waves Propagating along the Magnetic Field in Cold Plasmas},
J. Phys. Soc. Jpn. {\bf 41} (1976), 265--271.

\bibitem{M76} Einal Mj\o lhus, 
\textit{On the modulational instability of hydromagnetic waves parallel to the magnetic field}, 
J. Plasma Phys. {\bf 16} (1976) 321--334.

\bibitem{MMW07} Jeffrey Moses, Boris A. Malomed, Frank W. Wise,
\textit{Self-steepening of ultrashort optical pulses without self-phase-modulation},
Phys. Rev. A {\bf 76} (2007), 1--4


\bibitem{O96} Tohru Ozawa, 
\textit{On the nonlinear Schr\"{o}dinger equations of derivative type}, 
Indiana Univ. Math. J. {\bf 45} (1996), 137--163.

\bibitem{PS75}
Lawrence E. Payne, David H. Sattinger,
\textit{Saddle points and instability of nonlinear hyperbolic equations} 
Israel J. Math. {\bf 22} (1975), no. 3-4, 273--303. 


\bibitem{San15}  Gleison do N. Santos,
\textit{Existence and uniqueness of solution for a generalized nonlinear derivative Schr\"{o}dinger equation}, 
J. Differential Equations {\bf 259} (2015), no. 5, 2030--2060. 

\bibitem{S68}
David H. Sattinger, 
\textit{On global solution of nonlinear hyperbolic equations},
Arch. Rational Mech. Anal. {\bf 30} (1968) 148--172.


\bibitem{T99}
Hideo Takaoka,
\textit{Well-posedness for the one-dimensional nonlinear Schr\"{o}dinger equation with the derivative nonlinearity}, Adv. Differential Equations {\bf 4} (1999), no. 4, 561--580. 

\bibitem{T01}
Hideo Takaoka,
\textit{Global well-posedness for Schr\"{o}dinger equations with derivative in a nonlinear term and data in low-order Sobolev spaces},
Electron. J. Differential Equations (2001), No. 42, 23 pp.

\bibitem{TX17pre}
Xingdong Tang, Guixiang Xu, 
\textit{Stability of the sum of two solitary waves for (gDNLS) in the energy space},
preprint, arXiv:1702.07858. 

\bibitem{T72}
Masayoshi Tsutsumi,
\textit{On solutions of semilinear differential equations in a Hilbert space}, 
Math. Japon. {\bf 17} (1972), 173--193.

\bibitem{TF80} Masayoshi Tsutsumi, Isamu Fukuda,
\textit{On solutions of the derivative nonlinear Schr\"{o}dinger equation. Existence and Uniqueness}, 
Funkcial. Ekvac. {\bf 23} (1980), 259--277.

\bibitem{W83} Michael I. Weinstein, \textit{Nonlinear Schr\"{o}dinger equations and sharp interpolation estimates}, Comm. Math. Phys. {\bf 87} (1982/83), no. 4, 567-576.

\bibitem{Wu13} Yifei Wu,
\textit{Global well-posedness for the nonlinear Schr\"{o}dinger equation with derivative in energy space},
Anal. PDE {\bf 6} (2013), no. 8, 1989--2002. 

\bibitem{Wu15} Yifei Wu, 
\textit{Global well-posedness on the derivative nonlinear Schr\"{o}dinger equation},
Anal. PDE {\bf 8} (2015), no. 5, 1101--1112. 


\end{thebibliography}
\end{document}